\title[Hyperboloidal evolution and global dynamics for the cubic wave equation]{Hyperboloidal evolution and global dynamics\\ for the focusing cubic wave equation}
\keywords{nonlinear wave equations, cubic wave equation, hyperboloidal initial value problem, global solutions, asymptotics, blowup, stability}
\subjclass[2010]{35L05 (primary), 35L71, 58J45 (secondary)}
\author[A.~Burtscher]{Annegret Y.\ Burtscher}
\address{Mathematical Institute and Hausdorff Center for Mathematics, University of Bonn, Endenicher Allee 60, 53115 Bonn, Germany}
\email{\href{mailto:burtscher@math.uni-bonn.de}{burtscher@math.uni-bonn.de}}
\urladdr{\href{http://annegretburtscher.wordpress.com}{http://annegretburtscher.wordpress.com}}
\author[R.~Donninger]{Roland Donninger}
\address{Mathematical Institute and Hausdorff Center for Mathematics, University of Bonn, Endenicher Allee 60, 53115 Bonn, Germany}
\address{Faculty of Mathematics, University of Vienna, Oskar-Morgenstern-Platz 1, 1090 Vienna, Austria}
\email{donninge@math.uni-bonn.de, roland.donninger@univie.ac.at}
\date{\today}
\numberwithin{equation}{section}
\theoremstyle{plain}
\newtheorem{theorem}{Theorem}[section]
\newtheorem{corollary}[theorem]{Corollary}
\newtheorem{proposition}[theorem]{Proposition}
\newtheorem{lemma}[theorem]{Lemma}
\theoremstyle{definition}
\newtheorem{definition}[theorem]{Definition}
\theoremstyle{remark}
\newtheorem{remark}[theorem]{Remark}
\newcommand{\vol}{\operatorname{vol}}
\newcommand{\eps}{{\tilde \varepsilon}}
\def\L{\mathbf{L}} 
\def\wL{\widetilde \L} 
\def\hL{\widehat \L} 
\def\N{\mathbf{N}} 
\def\S{\mathbf{S}}
\def\T{\mathbf{T}}
\def\R{\mathbf{R}} 
\def\P{\mathbf{Q}} 
\def\Q{\mathbf{P}} 
\def\p{q} 
\def\q{p} 
\def\wP{\widetilde \Q} 
\def\I{\mathbf{I}} 
\def\K{\mathbf{K}} 
\def\wK{\widetilde \K} 
\def\C{\mathbf{C}} 
\def\wC{\widetilde \C}
\def\G{\mathbf{G}} 
\def\F{\mathbf{F}} 
\newcommand\Z[2]{\mathbf{Z}(#1,#2)}
\def\RR{\mathbb{R}}
\def\CC{\mathbb{C}}
\def\SS{\mathbb{S}} 
\def\BB{\mathbb{B}} 
\def\A{{\mathcal{A}^\varepsilon}} 
\def\Ad{{\mathcal{A}^\varepsilon_\delta}}
\def\cC{\mathcal{C}} 
\def\H{\mathcal{H}}
\def\cB{\mathcal{B}} 
\def\D{\mathcal{D}}
\def\X{{\mathcal{X}^\varepsilon}} 
\def\Xd{{\mathcal{X}^\varepsilon_\delta}}
\def\M{\mathcal{M}} 
\def\graph{\mathrm{graph}}
\def\ii{i} 
\def\rg{\operatorname{rg}} 
\def\eetilde{ - \frac{1}{2} + \eps} 
\def\beeptilde{ \left( \frac{1}{2} - \eps \right)} 
\def\eep{\frac{1}{2}-\varepsilon} 
\def\beep{\left(\frac{1}{2}-\varepsilon\right)} 
\def\dbeep{(1-2\varepsilon)} 
\begin{document}


\begin{abstract}
The focusing cubic wave equation in three spatial dimensions has the explicit solution $\sqrt{2}/t$. We study the stability of the blowup described by this solution as $t \to 0$ without symmetry restrictions on the data. Via the conformal invariance of the equation we obtain a companion result for the stability of slow decay in the framework of a hyperboloidal initial value formulation. More precisely, we identify a codimension-1 Lipschitz manifold of initial data leading to solutions which converge to Lorentz boosts of $\sqrt{2}/t$ as $t\to\infty$. These global solutions thus exhibit a slow nondispersive decay, in contrast to small data evolutions.
\end{abstract}

\maketitle


\section{Introduction}
\label{intro}

\noindent We consider the cubic wave equation
\begin{align}\label{cwe}
 \Box v(t,x) + v(t,x)^3 = 0, \quad x \in \RR^3,
\end{align}
 with wave operator $\Box := - \partial_t^2 + \Delta_x$ and conserved energy 
\[
 \frac{1}{2} \int_{\RR^3} \left (|\partial_t v(t,x)|^2 + |\nabla_x v(t,x)|^2 \right ) dx 
 - \frac{1}{4}\int_{\RR^3} |v(t,x)|^{4} \, dx.
\]
 It is well known that solutions with small $\dot H^1(\RR^3) \times L^2(\RR^3)$-norm exist globally and scatter to zero \cite{P}, whereas for large initial data nonlinear effects dominate and generically lead to finite-time blowup \cite{BCT,DS12,G,L,MZ}. The explicit solution $v_0(t,x) = \sqrt{2} / t$ of \eqref{cwe}, on the other hand, exists globally for $t\geq 1$ but is nondispersive. Since it has infinite energy, its role in the dynamics is a priori unclear. Nevertheless, solutions with such an exceptionally slow decay have been observed in numerical simulations \cite{BZ} and are conjectured to sit on the threshold between dispersion and finite-time blowup. The main purpose of this paper is to develop a rigorous understanding of this phenomenon.

 To this end, we follow \cite{BZ, DZ} and utilize the conformal invariance of the cubic wave equation~\eqref{cwe} to reduce the question of stability of $v_0(t,x)$ as $t\to\infty$ to the stability analysis of the blowup described by $v_0(t,x)$ as $t\to 0$. In fact, the conformal transformation leads to a natural \emph{hyperboloidal initial value formulation} in which the stability of $v_0(t,x)$ as $t\to\infty$ can be studied. In what follows, we carry out this transformation, and then state and review our Theorem~\ref{maintheorem-blowup} for the blowup stability. We then return to the decay picture. After introducing the hyperboloidal initial value problem of interest, we state two decay results for global solution of \eqref{cwe}. Our main Theorem~\ref{maintheorem} verifies the codimension-1 stability of slow decay.

\subsection*{The blowup result}\label{ssec1.1}
 The cubic wave equation~\eqref{cwe} in $\RR^3$ is conformally invariant. This invariance is expressed in terms of the (time-reversed) \emph{Kelvin transform} $\kappa$, which reads
\begin{align}\label{Kt}
 (T,X) &\mapsto \kappa_T (X) = \kappa(T,X)=\left( - \frac{T}{T^2 - |X|^2}, \frac{X}{T^2 - |X|^2} \right).
\end{align}
 Note that the inverse transform $\kappa^{-1}$ from $(t,x)$ to $(T,X)$ is of the same form, i.e.,
\begin{align}\label{coord1}
 \kappa^{-1}(t,x) = \kappa_t(x) = \left( - \frac{t}{t^2 - |x|^2}, \frac{x}{t^2 - |x|^2} \right),
\end{align}
 and we moreover have the identity
\begin{align*}
 t^2 - |x|^2 = (T^2 - |X|^2)^{-1}.
\end{align*}
 The coordinate transformation $\kappa^{-1}$ maps the future light cone $\triangledown_0 := \{ (t,x) \, | \, t>0, |x| < t \}$ to the past light cone $\vartriangle_0 := \{ (T,X) \, | \, T < 0 , |X| < - T \}$.
A straightforward computation now shows the following invariance property.

\begin{lemma} \label{lem:transform1}
 The function
\[
 u(T,X) = \frac{v \circ \kappa_T(X)}{T^2 - |X|^2} = \frac{1}{T^2 - |X|^2} v \left( - \frac{T}{T^2 - |X|^2}, \frac{X}{T^2 - |X|^2} \right)
\]
 solves the cubic wave equation
\begin{align}\label{cwe-u}
 \Box u(T,X) + u(T,X)^3 = 0
\end{align}
 on the past light cone $\vartriangle_0$ if and only if $v$ solves the same equation \eqref{cwe} on the future light cone $\triangledown_0$.
\end{lemma}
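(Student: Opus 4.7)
The plan is to exploit the conformal invariance of $\Box + (\cdot)^3$ on four-dimensional Minkowski spacetime. The key geometric fact is that the Kelvin transform $\kappa$ is a conformal diffeomorphism: one checks directly that $\kappa^*\eta = \sigma^{-2}\eta$, where $\sigma(T,X):=T^2-|X|^2$ and $\eta$ denotes the Minkowski metric. To verify this I would differentiate the components of $\kappa(T,X)=(-T/\sigma,\,X/\sigma)$ via the quotient rule, using $\partial_T\sigma=2T$ and $\partial_{X^j}\sigma=-2X^j$, and then expand the quadratic form $-(d\kappa^0)^2+\sum_j(d\kappa^j)^2$; after collecting terms the cross products cancel and the result is $\sigma^{-2}(-dT^2+|dX|^2)$.

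With this in hand I would invoke the standard conformal covariance of the scalar wave operator in spacetime dimension four. Since both the Minkowski metric $\eta$ and its pullback $\tilde g:=\kappa^*\eta$ have vanishing scalar curvature, the Yamabe/conformal Laplacian reduces to $\Box$ for both metrics, and one obtains the identity
\[
\Box_{\tilde g}\psi \;=\; \sigma^{3}\,\Box_{T,X}\bigl(\sigma^{-1}\psi\bigr).
\]
Setting $\psi = v\circ\kappa$ and using that $\kappa$ is an isometry between $\tilde g$ and $\eta$ on the image, so that $\Box_{\tilde g}(v\circ\kappa)=(\Box v)\circ\kappa$, this rearranges to
\[
\Box_{T,X}\,u \;=\; \sigma^{-3}\,(\Box v)\circ\kappa
\]
for $u(T,X) = \sigma^{-1}(v\circ\kappa)(T,X)$. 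The ansatz also gives $u^3 = \sigma^{-3}(v^3)\circ\kappa$, so both terms of $\Box u + u^3$ carry the common factor $\sigma^{-3}$, and the cubic wave equation in the $(T,X)$-coordinates on $\vartriangle_0$ is equivalent to the cubic wave equation in the $(t,x)$-coordinates on $\triangledown_0$. The correspondence of domains follows from the involutive nature of $\kappa$ together with the identity $t^2-|x|^2=\sigma^{-1}$ stated in the excerpt.

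The main obstacle is essentially bookkeeping: the explicit verification of $\kappa^*\eta=\sigma^{-2}\eta$ is where the bulk of the algebra lives, and a careless sign — particularly the time-reversal built into the first component of $\kappa$ — would derail everything. Beyond that, the fundamental reason the argument works is a numerical match: the exponent $3$ in the nonlinearity meets the exponent $-3$ from the conformal covariance identity, which is special to spacetime dimension four (equivalently, spatial dimension three). A reader preferring to avoid invoking conformal covariance may instead perform the chain rule on $\Box_{T,X}u$ directly; this is elementary but considerably more tedious, and the cancellations leading to the factor $\sigma^{-3}(\Box v)\circ\kappa$ recover exactly the conformal identity above.
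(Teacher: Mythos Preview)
Your argument is correct. The paper does not give an explicit proof; it simply prefaces the lemma with ``A straightforward computation now shows the following invariance property,'' implicitly pointing to a direct chain-rule expansion of $\Box_{T,X}u$. Your route is genuinely different: you first identify $\kappa^*\eta=\sigma^{-2}\eta$ and then invoke the conformal covariance of the wave operator in spacetime dimension four, so that the identity $\Box_{T,X}u=\sigma^{-3}(\Box v)\circ\kappa$ drops out without any lengthy calculation. This has the advantage of explaining \emph{why} the cubic power is the right one (it is the conformal power $(n+2)/(n-2)$ in $n=4$), and the only real algebra is the verification of the conformal factor, which you outline correctly. The direct computation the paper alludes to is more self-contained but hides this structure; your final paragraph already acknowledges that the two approaches are equivalent and that the brute-force route recovers exactly the conformal identity.
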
 

 As a consequence of this conformal invariance of \eqref{cwe}, the stability analysis of
 $v_0(t,x)$ as $t \to \infty$ translates into the question of stability of the blowup solution
\[
 u_0 (T,X) = \frac{v_0 \circ \kappa_T (X)}{T^2 - |X|^2} = \frac{\sqrt{2}}{-T}
\]
 of \eqref{cwe-u} in the backward light cone $\vartriangle_0$. In this \emph{blowup picture} we establish the following codimension-1 stability result for the blowup described by $u_0$ and its Lorentz boosts $u_a$ with rapidity $a \in \RR^3$ (see Section~\ref{ssec:lorentz} for the exact definition). We denote by $B_{r}$ the open ball of radius $r$ in $\RR^3$ and $\BB:=B_1$.

\begin{theorem}[Blowup stability]\label{maintheorem-blowup}
 There exists a codimension-1 Lipschitz manifold $\M$ of initial data in $H^1(\BB)\times L^2(\BB)$, with $(0,0) \in \M$, such that the Cauchy problem
 \begin{align*}
  \Box u(T,X) + u(T,X)^3 &= 0, \nonumber \\
  u(-1,.) &= u_0(-1,.) + F, \\
  \partial_T u (-1,.) &=  \partial_T u_0 (-1,.) + G, \nonumber
 \end{align*}
 with $(F,G) \in \M$ has a unique solution $u$ (in the Duhamel sense) on the truncated lightcone $\vartriangle_0 \cap \{(T,X): T\in [-1,0)\}$. For a unique rapidity $a \in \RR^3$ and the corresponding Lorentz-boosted $u_0$, denoted by $u_a$, we have
\begin{align*}
 |T|^{\frac{1}{2}}\| u(T,.) - u_{a}(T,.) \|_{\dot H^1(B_{|T|})}&\lesssim |T|^{\frac12-}, \\
 |T|^{-\frac12} \| u(T,.)-u_a(T,.) \|_{L^2(B_{|T|})}&\lesssim |T|^{\frac12-}, \\
 |T|^\frac12 \| \partial_T u(T,.) - \partial_T u_{a}(T,.) \|_{L^2(B_{|T|})}&\lesssim |T|^{\frac{1}{2}-},
\end{align*}
 for all $T \in [-1,0)$.
\end{theorem}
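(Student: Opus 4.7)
The natural approach is to pass to self-similar coordinates adapted to the blowup, namely $\tau:=-\log(-T)$ and $\xi:=X/(-T)$, which map the truncated backward light cone $\vartriangle_0\cap\{-1\leq T<0\}$ bijectively onto the cylinder $[0,\infty)\times\BB$. In these variables $u_0$ and each Lorentz-boosted profile $u_a$ become $\tau$-independent, and after rescaling by the conformal factor that turns the $(\dot H^1\times L^2)(B_{|T|})$ energy on each time slice into a fixed norm, the Cauchy problem rewrites as an abstract semilinear equation
\[
 \partial_\tau\Phi(\tau)=\wL\Phi(\tau)+\N(\Phi(\tau)), \qquad \Phi(0)=U,
\]
for a pair $\Phi=(\phi,\partial_\tau\phi)$ on a Hilbert space $\H\hookrightarrow H^1(\BB)\times L^2(\BB)$. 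Here $\wL$ is the closed operator obtained by adding multiplication by $3u_0^2$ to the rescaled free wave generator, and $\N$ collects the remaining quadratic and cubic pieces of $u^3$. The three estimates in the theorem will follow from a decay bound $\|\Phi(\tau)-\Phi_{a_\infty}\|_\H\lesssim e^{-\omega\tau}$ in similarity variables.

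The main analytical core is the spectral study of $\wL$. By Lumer--Phillips and a multiplier identity one first shows that $\wL$ generates a strongly continuous semigroup with essential spectrum in a left half-plane $\{\mathrm{Re}\,\lambda\leq-\sigma\}$. Point spectrum in the closed right half-plane is then enumerated via spherical harmonic decomposition, which reduces the eigenvalue equation to a Heun-type ODE whose indicial data are explicit thanks to $u_0^2=2/T^2$. Two obstructions should surface: a three-dimensional kernel spanned by the Lorentz boost modes $\partial_{a_j}u_a|_{a=0}$, and a one-dimensional unstable eigenspace (at $\lambda=1$) generated by the time-translation mode $\partial_T u_0$, reflecting the freedom in choosing the blowup time. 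All other spectrum must be shown to lie strictly in the open left half-plane, which together with Gearhart--Pr\"uss yields a splitting $e^{\tau\wL}=e^{\tau}\mathbf{P}_u+e^{\tau\wL}\mathbf{P}_s$ with exponential decay $\|e^{\tau\wL}\mathbf{P}_s\|_{\H\to\H}\lesssim e^{-\omega\tau}$.

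The nonlinear step then follows the Lyapunov--Perron/stable manifold scheme. The three-dimensional boost kernel is removed by modulation: I write the solution as $\Phi_{a(\tau)}+\Psi(\tau)$, where $\Phi_a$ is the similarity-coordinate profile of $u_a$, and impose an orthogonality condition forcing $\Psi(\tau)$ to be transverse to the boost modes. This turns the evolution of $a(\tau)$ into a closed ODE driven by $\Psi$, and exponential smallness of $\Psi$ yields convergence $a(\tau)\to a_\infty$, supplying the unique rapidity in the statement. For $\Psi$ itself, one sets up a Banach fixed point in a weighted space of trajectories decaying like $e^{-\omega\tau}$: for each initial datum $U$ and each scalar correction $h$ in the unstable direction, the semigroup splitting produces a contraction whose fixed point defines $\Psi^{U,h}$. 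A final implicit function theorem solves the single equation $\mathbf{P}_u\Psi^{U,h}(0)=0$ for $h=h(U)$, Lipschitz in $U$, whose graph constitutes the codimension-1 manifold $\M$. Undoing the similarity change of variables converts the $\H$-decay of $\Phi-\Phi_{a_\infty}$ into the three scale-invariant bounds in $(T,X)$ stated in the theorem.

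The main obstacle is the spectral step. One has to rule out any further eigenvalues of $\wL$ in $\{\mathrm{Re}\,\lambda\geq 0\}$ and derive uniform resolvent bounds on $i\mathbb{R}$; because the eigenvalue ODEs have irregular singularities at the endpoints and no explicit closed-form solutions are available away from $\lambda\in\{0,1\}$, this will require a delicate Frobenius/connection analysis combined with a contradiction argument exploiting the specific structure of $u_0^2$, in the spirit of recent work on self-similar blowup stability. A secondary, but real, difficulty is that the cubic nonlinearity in three space dimensions is $\dot H^1$-critical, so closing the nonlinear estimates in $\H$ at the level of $H^1\times L^2$ regularity demands careful Strichartz-type or Moser-type estimates in similarity coordinates, compatible with the modulation and the one-dimensional correction used to build $\M$.
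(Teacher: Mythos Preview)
Your outline is essentially the paper's approach: similarity coordinates, first-order reformulation on $\H\simeq H^1(\BB)\times L^2(\BB)$, spectral analysis of the linearization yielding the eigenvalues $0$ (three Lorentz modes) and $1$ (time-translation mode), Gearhart--Pr\"uss for decay on the stable subspace, modulation in $a(\tau)$ to kill the Lorentz kernel, and a Lyapunov--Perron correction in the $\lambda=1$ direction whose vanishing defines the codimension-one Lipschitz manifold.

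Two points where you misjudge the difficulties. First, the spectral step is not the main obstacle here: the full analysis of $\sigma(\L_0)$ (including the absence of further eigenvalues in $\{\Re\lambda\geq -\tfrac12\}$) is taken from earlier work, and what the present paper actually does is \emph{perturb} to $\L_a$ for small $a$ via Kato's gap topology---the key new lemma is a uniform resolvent bound $\|\R_{\L_0}(\lambda)\|\leq c$ on the unbounded region $\{\Re\lambda\geq -\tfrac12+\tilde\varepsilon,\ |\lambda|>\lambda_0\}$, obtained by a simple Neumann-series argument exploiting that the potential term gains a power of $|\lambda|^{-1}$. No Frobenius or connection analysis is needed. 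Second, the nonlinear step does \emph{not} require Strichartz estimates: on the bounded ball the Sobolev embedding $H^1(\BB)\hookrightarrow L^6(\BB)$ is exactly enough to put $\phi_1^3$ into $L^2(\BB)$ and to prove the Lipschitz bounds for $\N_a$, so the fixed-point argument closes in $\H$ directly. The Strichartz norms in the paper appear only when translating the result back to the original $(t,x)$ coordinates in the decay picture, not in the construction of the solution.
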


\begin{remark}
Observe that
\begin{align*}
 \|u_0(T,.)\|_{L^2(B_{|T|})}&\simeq |T|^\frac12, \\
 \|\partial_T u_0(T,.)\|_{L^2(B_{|T|})}&\simeq |T|^{-\frac{1}{2}},
\end{align*}
and thus, the normalization factors on the left hand sides are natural.
In particular, the solution $u$ converges to the Lorentz-boosted $u_0$ as the blowup time is approached.
\end{remark}

\begin{remark}
The instability of the blowup comes from the fact that general perturbations of $u_0$ will change the blowup time. Consequently, since the manifold has codimension one, it follows that the blowup profile is stable up to time translation (and Lorentz boosts). In this sense, the instability is not ``real''. In fact, one could include a Lyapunov--Perron-type argument as in \cite{DS} to get rid of the codimension-1 condition and vary the blowup time instead. However, for the decay picture it turns out that the result in its present form is more useful.
\end{remark}

\begin{remark}
Theorem \ref{maintheorem-blowup} is closely related to the seminal work by Merle and Zaag \cite{MZ, MZ1} which established the universality of the blowup \emph{speed} for the conformal wave equation. In the subconformal case they also proved the stability of the blowup \emph{profile} \cite{MZ2, MZ3}.
A similar stability result for superconformal equations was obtained in \cite{DS}, but in a stronger topology.
\end{remark}

\subsection*{The decay result}\label{ssec1.2}
 Geometrically, the conformal invariance naturally leads to a hyperboloidal initial value problem for equation \eqref{cwe}. For $T \in (-\infty,0)$ we consider the spacelike slices $\Sigma_T$ defined by
\begin{align*}
 \Sigma_T :=  \left\{ \kappa_{T}(X) \, \Big| \, X \in B_{|T|} \right\} =
\left\{ (t,x) \in (0,\infty) \times \RR^3 \, \Big| \, \frac{t^2-|x|^2}{t} = \frac{1}{(-T)} \right\}.
\end{align*}
 These slices provide a foliation of the future light cone $\triangledown_0$. The hyperboloidal slices $\Sigma_T$ in the decay picture are the pre-images of constant time $T<0$ slices in the blowup picture, see Figure~\ref{fig2a}. We emphasize that the slices $\Sigma_T$ are asymptotic to \emph{different} light cones and hence ``foliate'' future null infinity. As a consequence, energy can escape to infinity and this provides the crucial stabilizing mechanism. 

\begin{figure}[h]
\centering
\def\svgwidth{.45\columnwidth}
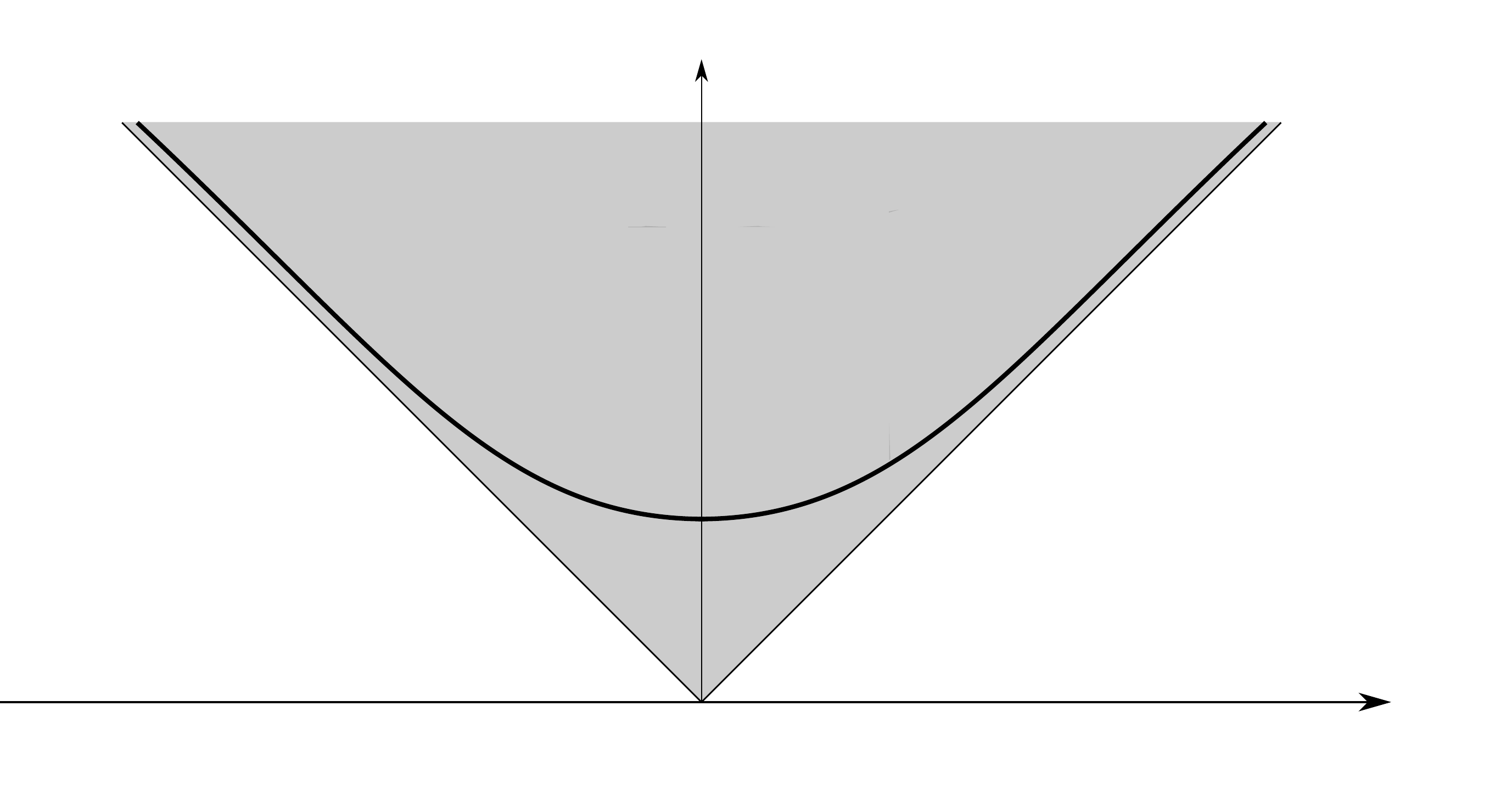
\def\svgwidth{.45\columnwidth}
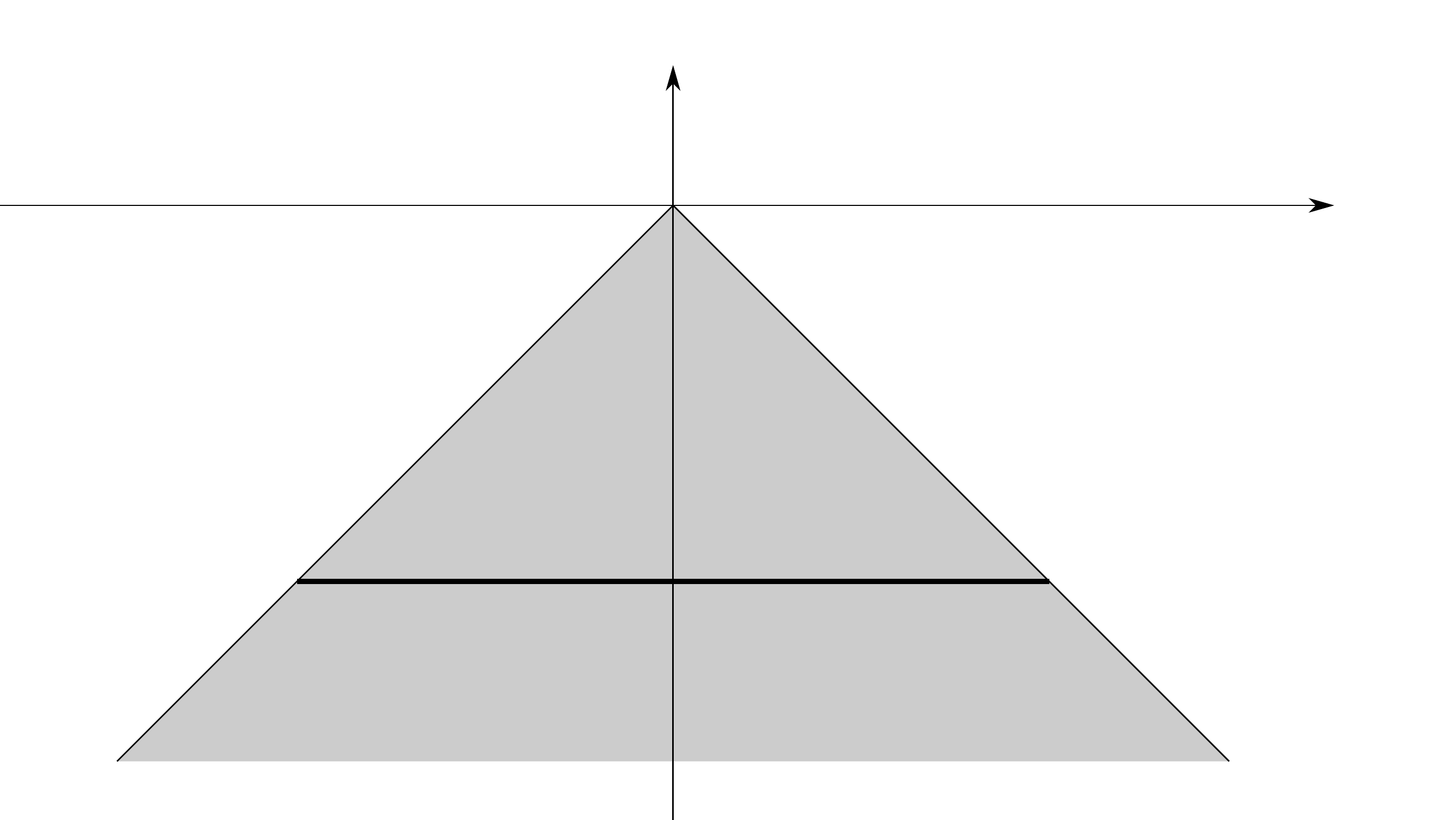
\caption{Domain of dependence in Cartesian coordinates $(t,x)$ and hyperboloidal coordinates $(T,X)$.}
\label{fig2a}
\end{figure}

\begin{figure}[h]
\centering
\def\svgwidth{.8\columnwidth}
\hspace*{1cm} 
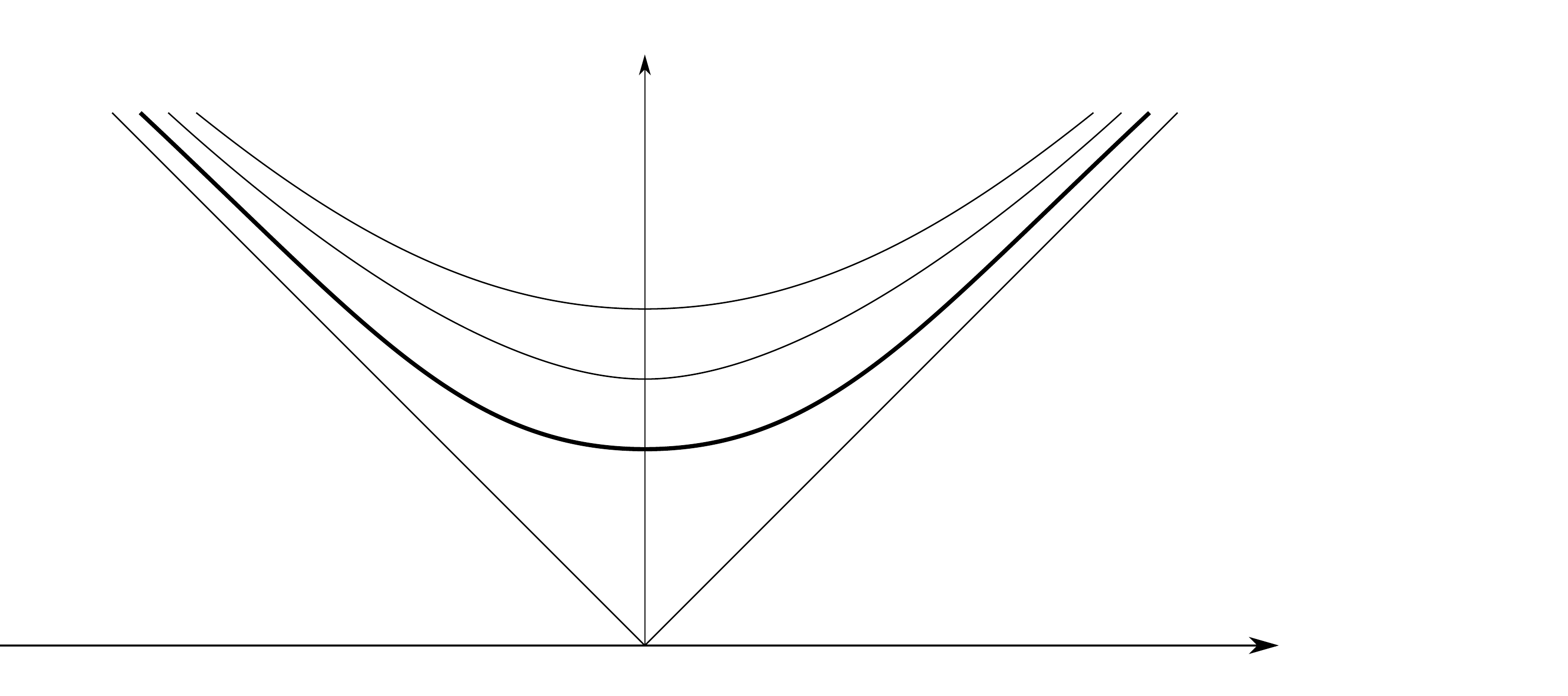
\caption{The initial value problem for the cubic wave equation is studied using a hyperboloidal foliation $(\Sigma_T)_T$, $T \in (-\infty,0)$, of the future light cone emanating from the origin. Initial data are prescribed on the spacelike hyperboloid $\Sigma_{-1}$.}
\label{fig1}
\end{figure}

 In the hyperboloidal initial value formulation the data are prescribed on the spacelike hyperboloid $\Sigma_{-1}$, see Figure~\ref{fig1}. These initial data consist of a function $v$ in $H^1(\Sigma_{-1})$ and a derivative $\nabla_n v$ normal to $\Sigma_{-1}$ in $L^2(\Sigma_{-1})$. The function spaces $H^1(\Sigma_{-1})$ and $L^2(\Sigma_{-1})$, as well as the derivative $\nabla_n$, are naturally transferred from the blowup picture and defined as follows.

\begin{definition}[Function spaces on hyperboloids]  \label{def:norms}
 On each hyperboloidal slice $\Sigma_T$ we define the norms
\begin{align*}
 \| v \|^2_{L^2(\Sigma_T)} &:= \int_{B_{|T|}} \left| \frac{v \circ \kappa_T(X)}{T^2 - |X|^2} \right|^2 dX, &
 \| v \|^2_{\dot H^1(\Sigma_T)} &:= \int_{B_{|T|}} \left| \nabla_X \frac{v \circ \kappa_T(X)}{T^2-|X|^2} \right|^2 dX,
\end{align*}
 and
\begin{align*}
 \| v \|^2_{H^1(\Sigma_T)} := \| v \|^2_{\dot H^1(\Sigma_T)} + |T|^{-2} \| v \|^2_{L^2(\Sigma_T)}.
\end{align*}
\end{definition}

\begin{definition}[Normal derivative] \label{def:nabla_n}
 The differential operator $\nabla_n$ is defined by the relation
 \begin{align*}
  \frac{\nabla_n v \circ \kappa_T (X)}{T^2 - |X|^2} = \partial_T \frac{(v \circ \kappa_T)(X)}{T^2-|X|^2},
 \end{align*}
 and explicitly given by
 \[
  \nabla_n v(t,x) = (t^2 + |x|^2)\partial_t v(t,x) + 2 t x^j \partial_j v(t,x) + 2tv(t,x).
 \]
\end{definition}

\begin{remark}
 The principal term $(t^2 + |x|^2)\partial_t v + 2 t x^j \partial_j v$ of $\nabla_n v$ is the pullback of the vector field $\partial_T$ along the inverse Kelvin transform $\kappa^{-1}$, and well-known as the Morawetz multiplier $K_0$. The zeroth order term $2tv$ appears due to the weight $(T^2-|X|^2)^{-1}$ in the transformation.
\end{remark}

 The result in the decay picture reads as follows. For the domain appearing in the Strichartz norm we refer to Figure~\ref{fig2}.
\begin{theorem}[Stability of slow decay] \label{maintheorem} 
 There exists a codimension-1 Lipschitz manifold $\M$ of initial data in $H^1(\Sigma_{-1}) \times L^2 (\Sigma_{-1})$, with $(0,0) \in \M$, such that the hyperboloidal initial value problem
 \begin{align*}
  \Box v(t,x) + v(t,x)^3 &= 0, \nonumber \\
  v |_{\Sigma_{-1}} &= v_0 |_{\Sigma_{-1}} + f \\
  \nabla_n v |_{\Sigma_{-1}} &=  \nabla_n v_0 |_{\Sigma_{-1}} + g, \nonumber
 \end{align*}
 with $(f,g) \in \M$ and $v_0(t,x) = \sqrt{2} /t$, has a unique solution $v$ (in the Duhamel sense) defined on the future domain of dependence $D^+ (\Sigma_{-1})$. For a unique rapidity $a \in \RR^3$ and the corresponding Lorentz-boosted $v_0$, denoted by $v_a$, we have the decay
\[
  |T|^\frac{1}{2} \left( \| v - v_a \|_{H^1(\Sigma_T)} + \| \nabla_n v - \nabla_n v_a \|_{L^2(\Sigma_T)} \right) \lesssim |T|^{\frac{1}{2}-}
\]
 for all $T \in [-1,0)$. Moreover, for any $\delta \in (0,1)$, the decay in Cartesian coordinates is\footnote{\label{note1}The $L^4_t L^4_x$-Strichartz norm in Theorem~\ref{maintheorem} and Theorem~\ref{smalldatatheorem} can be replaced by other Strichartz norms (see Remark~\ref{otherpq}).}
 \begin{align*}
  \| v - v_a \|_{L^4(t,2t)L^4(B_{(1-\delta)t})} \lesssim t^{-\frac{1}{2}+}
 \end{align*}
 as $t\to\infty$.
 \end{theorem}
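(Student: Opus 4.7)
The plan is to deduce Theorem~\ref{maintheorem} directly from Theorem~\ref{maintheorem-blowup} via the Kelvin correspondence of Lemma~\ref{lem:transform1}. The map $v \mapsto u$ with $u(T,X) = (v\circ\kappa_T(X))/(T^2-|X|^2)$ is a bijection between solutions of \eqref{cwe} on $D^+(\Sigma_{-1})$ and solutions of \eqref{cwe-u} on $\vartriangle_0\cap\{T\in[-1,0)\}$, sending $v_0 \mapsto u_0$; since $\kappa$ intertwines the Lorentz actions on the two cones, the Lorentz-boosted profile $v_a$ corresponds to $u_a$ for an appropriately matched rapidity. Specialized to $T=-1$, Definitions~\ref{def:norms} and~\ref{def:nabla_n} are tailored so that the induced data map
\[
 (f,g) \longmapsto (F,G), \qquad F(X) = \frac{f\circ\kappa_{-1}(X)}{1-|X|^2}, \quad G(X) = \frac{g\circ\kappa_{-1}(X)}{1-|X|^2},
\]
is an isometric isomorphism $H^1(\Sigma_{-1})\times L^2(\Sigma_{-1}) \to H^1(\BB)\times L^2(\BB)$.

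\textbf{Manifold and hyperboloidal decay.} I would then take $\M$ to be the preimage, under this isomorphism, of the codimension-1 Lipschitz manifold produced by Theorem~\ref{maintheorem-blowup}; codimension, Lipschitz regularity, and the inclusion of $(0,0)$ are automatically preserved. The associated blowup solution $u$ with its unique rapidity $a$ pulls back to a unique hyperboloidal solution $v$ on $D^+(\Sigma_{-1})$. The very definitions then give the slicewise identities
\[
 \|v-v_a\|_{\dot H^1(\Sigma_T)} = \|u-u_a\|_{\dot H^1(B_{|T|})}, \ \|v-v_a\|_{L^2(\Sigma_T)} = \|u-u_a\|_{L^2(B_{|T|})}, \ \|\nabla_n(v-v_a)\|_{L^2(\Sigma_T)} = \|\partial_T(u-u_a)\|_{L^2(B_{|T|})},
\]
so, using the $|T|^{-2}$-weight built into $\|\cdot\|_{H^1(\Sigma_T)}^2$, the three bounds of Theorem~\ref{maintheorem-blowup} combine into exactly the claimed $|T|^{1/2}(\|v-v_a\|_{H^1(\Sigma_T)} + \|\nabla_n(v-v_a)\|_{L^2(\Sigma_T)}) \lesssim |T|^{1/2-}$.

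\textbf{Cartesian Strichartz bound.} For the $L^4_tL^4_x$-estimate I would exploit the conformal invariance of the $L^4_{t,x}$-norm in $1+3$ spacetime dimensions: computing $|\det D\kappa| = (T^2-|X|^2)^{-4}$ together with $v = (T^2-|X|^2)(u\circ\kappa^{-1})$ yields $\iint |v|^4\,dx\,dt = \iint |u|^4\,dX\,dT$ on corresponding regions. The Cartesian slab $\{(t,x): t\in(t_0,2t_0),\ |x|<(1-\delta)t\}$ is mapped by $\kappa^{-1}$ into a truncated cone $\{(T,X): T\in[T_1,T_2],\ |X|\leq(1-\delta)|T|\}\subset\vartriangle_0$ with $|T_1|, |T_2| \simeq t_0^{-1}$, staying bounded away from the null boundary. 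It therefore suffices to show $\|u-u_a\|_{L^4([T_1,T_2])L^4(B_{|T|})} \lesssim |T_2|^{1/2-}$, which I would obtain from a Strichartz estimate at the wave-admissible pair $(4,4)$ at critical regularity $s=1/2$, applied to the difference equation $\Box w + 3u_a^2 w = -3u_a w^2 - w^3$ for $w = u - u_a$. The required $\dot H^{1/2} \times \dot H^{-1/2}$-data control follows by interpolating the $H^1 \times L^2$-bounds of Theorem~\ref{maintheorem-blowup}, and the scale invariance of the problem lets one close a bootstrap on the thin time slab near the blowup.

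\textbf{Main obstacle.} The geometric translation via $\kappa$ is essentially bookkeeping; the principal technical step is the Strichartz bound on the backward light cone. This is not an explicit conclusion of Theorem~\ref{maintheorem-blowup}, but it is naturally built into the semigroup or perturbative construction used to prove it, since any such proof must already control the cubic nonlinearity in a Strichartz space at the $\dot H^{1/2}$-critical regularity. Extracting this Strichartz regularity uniformly in the thin time slab and propagating it through the selection of the rapidity $a$ is the step requiring the most care.
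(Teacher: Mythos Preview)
Your reduction to the blowup picture and the hyperboloidal energy estimate are correct and match the paper: the identities you write down are exactly how the three bounds of Theorem~\ref{maintheorem-blowup} translate, via Definitions~\ref{def:norms} and~\ref{def:nabla_n}, into the $H^1(\Sigma_T)\times L^2(\Sigma_T)$ estimate.

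For the Cartesian $L^4_tL^4_x$ bound, however, you are working much harder than necessary, and your justification contains a misconception. The paper does \emph{not} use any Strichartz estimate, and your claim that ``any such proof must already control the cubic nonlinearity in a Strichartz space at the $\dot H^{1/2}$-critical regularity'' is false for this construction: the entire argument behind Theorem~\ref{maintheorem-blowup} takes place in the subcritical space $H^1(\BB)\times L^2(\BB)$, where the cubic term is handled by plain Sobolev embedding ($\|\phi_1^3\|_{L^2}\le\|\phi_1\|_{L^6}^3\lesssim\|\phi_1\|_{H^1}^3$). There is no hidden $\dot H^{1/2}$-level Strichartz regularity to extract, so the bootstrap you sketch would have to be built from scratch, including the treatment of the singular potential $3u_a^2\simeq T^{-2}$ in the dual Strichartz norm on a shrinking domain.

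The paper's actual argument is elementary. After the conformal change of variables (your Jacobian computation is correct), one passes to similarity coordinates and is left with
\[
 \|v-v_{a}\|_{L^4(t,2t)L^4(B_{(1-\delta)t})}^4 \;\lesssim\; \int_{-c_1/t}^{-c_2/t} |S|^{-1}\,\|(\psi-\psi_{a_\infty})(-\log(-S),\cdot)\|_{L^4(\BB)}^4\,dS.
\]
Since $\BB$ is a fixed bounded domain in $\RR^3$, the static Sobolev embedding $H^1(\BB)\hookrightarrow L^4(\BB)$ applies \emph{fiberwise} in $S$, giving $\|(\psi-\psi_{a_\infty})(-\log(-S),\cdot)\|_{L^4(\BB)}\lesssim\|\Phi(-\log(-S))\|+|a(-\log(-S))-a_\infty|\lesssim |S|^{1/2-\varepsilon}$, and integrating in $S$ yields $t^{-1/2+}$. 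No dispersive estimate is involved; this is also why the remark following the theorem allows any $L^p$ with $p\in(8/3,6]$ rather than a wave-admissible pair.
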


\begin{remark}
 As with the blowup result, the normalizing factor on the left-hand side reflects the behavior of the solution $v_a$ in the respective norm, i.e., we have
\[ \| v_a \|_{H^1(\Sigma_T)} + \| \nabla_n v_a \|_{L^2(\Sigma_T)} \simeq |T|^{-\frac{1}{2}} \]
 for all $T\in [-1,0)$.
\end{remark}

\begin{remark}
 Contrary to the blowup result, the instability is now ``real'' in the sense that generic evolutions will either disperse (i.e., decay faster, see below) or blow up in finite time. This is easily understood by noting that solutions in the blowup picture with blowup time \emph{larger} than $0$ correspond to dispersive solutions in the decay picture. On the other hand, solutions in the blowup picture with blowup time \emph{less} than $0$ correspond to finite-time blowup in the decay picture. Only those solutions in the blowup picture that blow up precisely at time $T=0$ lead to slow decay in the decay picture. 
\end{remark}

\begin{figure}[h]
\centering
\def\svgwidth{\columnwidth}
\hspace*{1cm} 
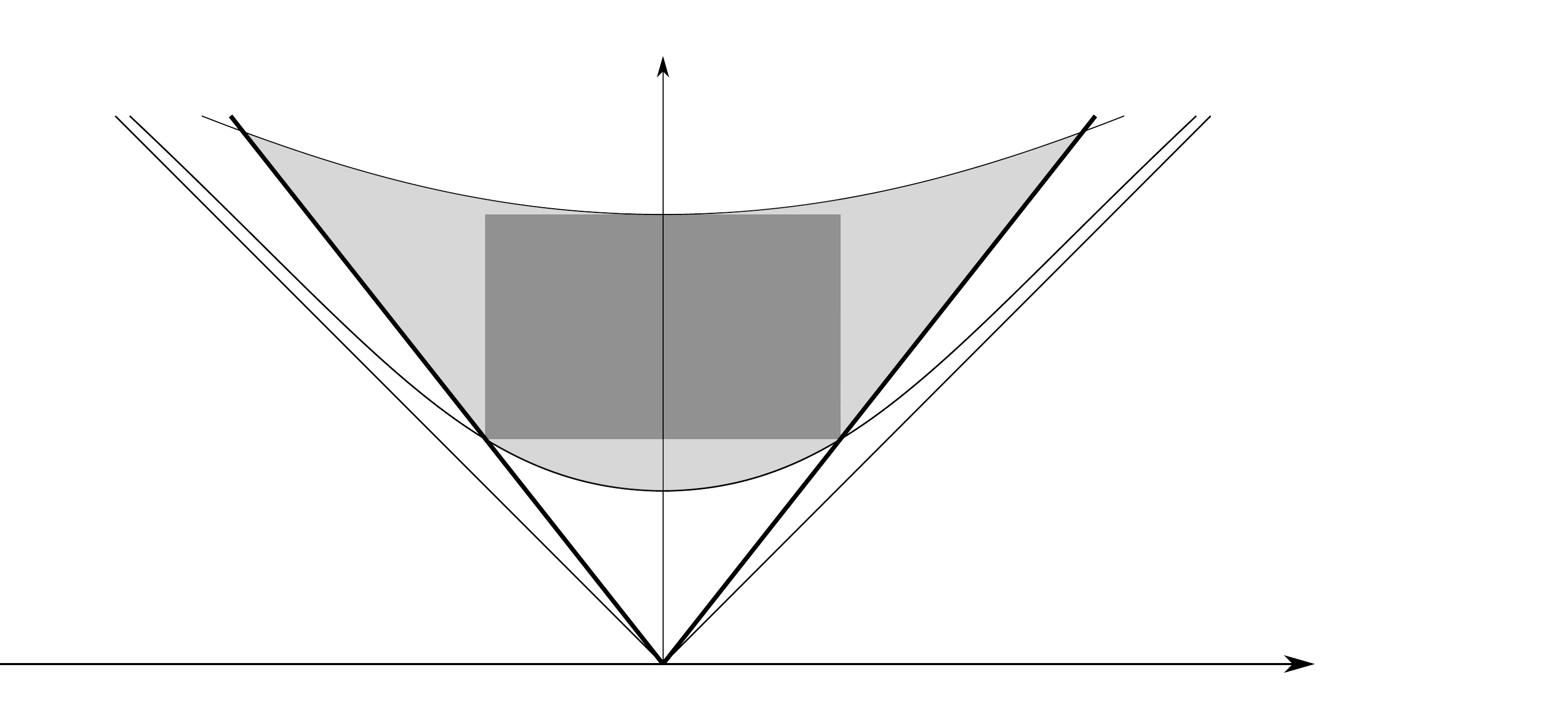
\caption{The Strichartz norms in Cartesian coordinates $(t,x)$ are estimated on cylindrical regions $Z_t := (t,2t) \times B_{(1-\delta)t}$, for any $\delta \in (0,1)$. A hyperboloidal domain covering this cylindrical region is, for example, given by the region $H_t$ between the two hyperboloidal slices $\Sigma_{- \frac{1}{t(2\delta - \delta^2)}}$ and $\Sigma_{- \frac{1}{2t}}$.}
\label{fig2}
\end{figure}

For quantitative comparison with the small data evolution we finally note the following result.

\begin{theorem}[Small data dispersive decay] \label{smalldatatheorem}
 There exists an $\varepsilon > 0$ such that the hyperboloidal initial value problem
\begin{align*}
 \Box v(t,x) + v(t,x)^3 &= 0, \nonumber \\
 v |_{\Sigma_{-1}} &= f, \\
 \nabla_n v |_{\Sigma_{-1}} &=  g. \nonumber
\end{align*}
 for initial data $(f,g) \in H^1(\Sigma_{-1})\times L^2(\Sigma_{-1})$ with $\| (f,g) \|_{H^1\times L^2(\Sigma_{-1})} < \varepsilon$ has a unique global solution $v$ (in the Duhamel sense) in the future domain of dependence $D^+(\Sigma_{-1})$ which, for any $\delta\in (0,1)$, satisfies\cref{note1}
\[
 \| v \|_{L^4(t,2t) L^4(B_{(1-\delta)t})} \lesssim t^{-\frac{1}{2}}.
\]
\end{theorem}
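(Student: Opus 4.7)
The plan is to conjugate the problem by the Kelvin transform of Lemma~\ref{lem:transform1}, solve a standard small-data Cauchy problem at $T=-1$ in the blowup picture, and then transfer the bounds back.

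Setting $F(X):=f(\kappa_{-1}(X))/(1-|X|^2)$ and $G(X):=g(\kappa_{-1}(X))/(1-|X|^2)$, Definitions~\ref{def:norms} and \ref{def:nabla_n} give
\[
 \|F\|_{H^1(\BB)}+\|G\|_{L^2(\BB)}\simeq \|f\|_{H^1(\Sigma_{-1})}+\|g\|_{L^2(\Sigma_{-1})},
\]
so smallness transfers across the transform. Extending $(F,G)$ to $(\tilde F,\tilde G)\in H^1(\RR^3)\times L^2(\RR^3)$ of comparable norm via a standard extension operator, a Banach fixed-point argument based on the Strichartz estimates for the free three-dimensional wave equation (in a norm such as $C_T H^1\cap L^4_T L^4_X$) produces, for $\varepsilon$ sufficiently small, a unique global Duhamel solution $u$ of $\Box u+u^3=0$ on $\RR^{1+3}$ with data $(\tilde F,\tilde G)$ at $T=-1$; note that for small data the focusing sign of the nonlinearity plays no role. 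This $u$ satisfies both the uniform energy bound $\|u(T)\|_{H^1(\RR^3)}\lesssim \varepsilon$ and the global Strichartz bound $\|u\|_{L^4_T L^4_X(\RR^{1+3})}\lesssim \varepsilon$. By finite speed of propagation the restriction of $u$ to $\vartriangle_0\cap\{T\in [-1,0)\}$ depends only on $(F,G)$, and Lemma~\ref{lem:transform1} then delivers $v(t,x):=(t^2-|x|^2)^{-1}\,u(\kappa^{-1}(t,x))$ as the unique hyperboloidal solution on $D^+(\Sigma_{-1})$.

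For the Strichartz decay I would compute the Jacobian of $\kappa^{-1}$; by the Lorentzian conformal nature of $\kappa$ it equals $(t^2-|x|^2)^{-4}$, which exactly cancels the weight produced by $|v|^4=(T^2-|X|^2)^4|u|^4$, yielding the clean identity
\[
 \|v\|_{L^4(Z_t)}^4=\int_{\kappa^{-1}(Z_t)} |u(T,X)|^4\,dT\,dX.
\]
Since $Z_t=(t,2t)\times B_{(1-\delta)t}$, its preimage is contained in the thin strip $\{T\in[-C_\delta/t,-c_\delta/t]\}\cap \vartriangle_0$ (cf.\ the region $H_t$ in Figure~\ref{fig2}). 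For each such $T$, Hölder's inequality on $B_{|T|}$ together with the Sobolev embedding $\dot H^1(\RR^3)\hookrightarrow L^6(\RR^3)$ give
\[
 \|u(T)\|_{L^4(B_{|T|})}\lesssim |T|^{1/4}\,\|u(T)\|_{\dot H^1(\RR^3)}\lesssim \varepsilon\,|T|^{1/4},
\]
and integrating the fourth power over the strip yields $\|v\|_{L^4(Z_t)}^4\lesssim \varepsilon^4\int_{c_\delta/t}^{C_\delta/t} s\,ds\lesssim \varepsilon^4 t^{-2}$, i.e.\ exactly the claimed $t^{-1/2}$ rate.

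The only nonroutine point is the bookkeeping across the Kelvin transform: one has to verify that the conformal weight $(T^2-|X|^2)^{-1}$ of Lemma~\ref{lem:transform1} combined with the Jacobian of $\kappa^{-1}$ converts the uniform-in-$T$ blowup-picture energy on the shrinking balls $B_{|T|}$ into precisely the $t^{-1/2}$ Strichartz decay over the cylinders $Z_t$. The small-data global theory on $\RR^{1+3}$ and the contraction mapping itself are entirely standard.
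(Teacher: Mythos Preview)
Your argument is correct, but it follows a genuinely different route from the paper's.

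The paper proves Theorem~\ref{smalldatatheorem} by specializing the entire similarity-coordinate machinery of Sections~\ref{sec2}--\ref{sec4} to the trivial background $\psi_0\equiv 0$: then $\L'_0=0$, the linearized evolution is the free semigroup $\S$ of Proposition~\ref{prop:L} with the sharp bound $\|\S(\tau)\|\le e^{-\tau/2}$, there are no unstable directions and hence no modulation or correction term, and a fixed point in $\X$ directly yields $\|\Phi(\tau)\|\lesssim e^{-\tau/2}$. The back-transformation in Step~2 of Section~\ref{ssec4.5} then produces $t^{-1/2}$ exactly (no $+$) because the semigroup decay is sharp. Your proof stays in Cartesian $(T,X)$-coordinates throughout: after the Kelvin transform you extend the data to $\RR^3$, invoke the standard small-data Strichartz well-posedness for the cubic wave equation (the reference~\cite{P} is already in the paper), and read off the uniform bound $\|u(T)\|_{\dot H^1}\lesssim\varepsilon$. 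Your Jacobian identity $\|v\|_{L^4(Z_t)}^4=\int_{\kappa^{-1}(Z_t)}|u|^4$ is exactly the $p=4$ case of the paper's computation~\eqref{Lq:1}, and your H\"older/Sobolev step $\|u(T)\|_{L^4(B_{|T|})}\lesssim |T|^{1/4}\|u(T)\|_{\dot H^1}$ is the Cartesian-coordinate analogue of the paper's use of $H^1(\BB)\hookrightarrow L^p(\BB)$ in~\eqref{Lq:2}.

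What each approach buys: yours is self-contained for this theorem and avoids introducing similarity coordinates and the operator $\L$ altogether, relying instead on textbook small-data theory; the paper's version is a two-line specialization of a framework already built for the harder Theorem~\ref{maintheorem}. One minor comment: you only need the solution on the bounded interval $T\in[-1,0)$, so the uniform $H^1$ bound can be obtained by local theory on a unit-length interval rather than full global-in-$T$ existence on $\RR^{1+3}$, which sidesteps any subtlety about the focusing energy; but your stronger claim is also true.
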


\begin{remark}
 Theorem~\ref{smalldatatheorem} and the fact that
\[
 \| v_a \|_{L^4 (t,2t) L^4 (B_{(1-\delta)t})} \simeq 1
\]
 show that the decay of the solutions constructed in Theorem~\ref{maintheorem} is slower than the decay of generic small data evolutions. In particular, the solutions from Theorem~\ref{maintheorem} are nondispersive.
\end{remark}

\subsection*{Additional remarks}
 Our main Theorem~\ref{maintheorem} rigorously establishes the codimension-1 stability of $v_0(t,x) = \sqrt{2}/t$ without symmetry assumptions, which was numerically observed in~\cite{BZ} in the case of spherical symmetry. A direct precursor of Theorem~\ref{maintheorem} is \cite{DZ}, where the codimension-4 stability of slow decay was established. The additional three unstable directions are caused by the Lorentz symmetry. In the present paper we use modulation theory to deal with this issue. In its present form, our result crucially relies on the conformal invariance of equation~\eqref{cwe} which necessitates the cubic power in three spatial dimensions. In general dimensions $d$, the conformally invariant wave equation is
\[ \Box v(t,x)\pm v(t,x)|v(t,x)|^{\frac{4}{d-1}}=0 \]
 with the explicit solution $v_0(t,x)=c_d \, t^{-\frac{d-1}{2}}$ ($c_d$ is a suitable constant). Our methods can be generalized to this situation. It is an interesting open question whether similar results can be proved for equations that are not conformally invariant.  
 We also hope that our paper is interesting from the general perspective of hyperboloidal methods which receive increasing attention (see, for example, \cite{DR,ES,F,Kl,LM,LM1,LS,St,W,Z}).

\subsection*{Outline}
 An overview of the methods and organization of the paper follows. We go through the proof of Theorem~\ref{maintheorem}, the adjustments for the proof of Theorem~\ref{smalldatatheorem} are explained and carried out in Section~\ref{ssec4.6}. The proof of Theorem~\ref{maintheorem-blowup} is part of that of Theorem~\ref{maintheorem}, see also Remark~\ref{rem:transformation}.
 
 Via the Kelvin transform we have seen that equation~\eqref{cwe} is exactly the same focusing cubic wave equation~\eqref{cwe-u} in hyperboloidal coordinates, however, instead of treating the asymptotics of global solutions in the future light cone we are led to look at solutions in the past light cone of the origin.
 In the preliminary Section~\ref{sec1} we further transform the cubic wave equation~\eqref{cwe-u} in hyperboloidal coordinates $(T,X)$ to similarity coordinates $(\tau,\xi)$.
 Similarity coordinates are a natural choice of coordinates for the selfsimilar solution $v_0$ which simply transforms to the constant solution $\psi_0 = \sqrt{2}$. Moreover, we introduce the Lorentz boosts of $v_0$, that is, the solutions $v_a$ of \eqref{cwe}.
 
 In Section~\ref{sec2} we rewrite the cubic wave equation as an evolution system of the form
 \[
 \partial_\tau \Psi(\tau) = \L \Psi(\tau) + \N (\Psi(\tau)),
\]
 where $\L$ is a linear operator and $\N$ is nonlinear. To account for the Lorentz symmetry we use a modulation ansatz
 \[
  \Psi(\tau) = \Psi_{a(\tau)} + \Phi(\tau)
 \]
 around the Lorentz transformations $\Psi_a$ of the selfsimilar solution $\Psi_0$. We allow for the (unknown) rapidity $a$ to depend on $\tau$, set $a(0)=0$ initially and assume (and later verify) that $a_\infty := \lim_{\tau \to \infty} a(\tau)$. This ansatz leads to an equivalent description as an evolution system for the perturbation term $\Phi$, i.e.,
\begin{align}\label{Phisystem}
 \partial_\tau \Phi(\tau) - \L \Phi(\tau) - \L_{a_\infty}' \Phi(\tau) = [\L_{a(\tau)}' - \L_{a_\infty}'] \Phi(\tau) + \N_{a(\tau)} (\Phi(\tau)) - \partial_\tau \Psi_{a(\tau)},
\end{align}
 where $\L'_a$ denotes the linearized part of the nonlinearity $\N$ and $\N_a$ the remaining full nonlinearity.
 
 In Section~\ref{sec3} we study the linearized part of the system \eqref{Phisystem}, that is
\begin{align}\label{Philinear}
 \partial_\tau \Phi(\tau) = [ \L + \L_{a_\infty}'] \Phi(\tau),
\end{align}
 and control the asymptotics of the solutions. To this end we employ semigroup theory and spectral theory. More precisely, the operator $\L$ generates a strongly continuous semigroup $\S$, and, since $\L'_{a_\infty}$ is bounded, there also exists a semigroup $\S_{a_\infty}$ generated by $\L_{a_\infty} := \L + \L_{a_\infty}'$. A careful analysis of the spectrum of $\L_{a_\infty}$ yields decay estimates for the linearized evolution \eqref{Philinear}.
 
 Finally, the nonlinear terms are controlled by standard Sobolev embedding, and the full nonlinear equation is solved by several fixed point arguments in Section~\ref{sec4}. For this purpose we first rewrite equation~\eqref{Phisystem} with $\Phi(0)=u$ as a weak integral equation
 \begin{align}\label{inteqK}
  \Phi(\tau) = \S_{a_\infty}(\tau) u - \int_0^\tau \S_{a_\infty} (\tau-\sigma) \left[ (\L_{a(\sigma)}'-\L_{a_\infty}') \Phi(\sigma) + \N_{a(\sigma)} (\Phi(\sigma)) - \partial_\sigma \Psi_{a(\sigma)} \right]  d\sigma 
 \end{align}
 using Duhamel's principle. The terms in the integrand are shown to be small and Lipschitz continuous with respect to $a$ and $\Phi$. The instabilities arising from the Lorentz symmetry of the cubic wave equation are suppressed by choosing the rapidity $a$ in a suitable way. In contrast, the time-translation instability is isolated by adding a correction term $\C_u(\Phi,a)$ and first solving a modified weak equation of the form
 \[
  \Phi(\tau) = \K_u(\Phi,a)(\tau) - \S_{a_\infty}(\tau) \C_u(\Phi,a), 
 \]
 where $\K_u(\Phi,a)(\tau)$ denotes the right hand side of \eqref{inteqK}, by means of contraction arguments. Solutions to this modified equation with vanishing correction term thus satisfy the original equation~\eqref{inteqK}. The condition $\C_u(\Phi,a) =0$ is shown to describe a codimension-1 manifold $\M$ of initial data, which we locally represent as a graph of a Lipschitz function.

\subsection*{Notation}
 By $\BB$ we denote the open unit ball and by $\SS$ the unit sphere in $\RR^3$. We write $B_r$ for the open ball with radius $r$ around the origin. The domain of an (unbounded) operator $\T$ is written as $\D(\T)$. The spectrum of a linear operator $\L$ is denoted by $\sigma(\L)$, the point spectrum by $\sigma_p(\L)$. Its resolvent is the operator $\R_\L$, i.e., $\R_\L(\lambda) = (\lambda - \L)^{-1}$ for $\lambda$ in the resolvent set $\rho(\L) = \CC \setminus \sigma(\L)$. We assume that $\delta,\varepsilon>0$ are generic and small, however implicit constants may depend on them. Einstein's summation convention is used throughout the manuscript. This means that if an index appears twice in a summation term (once as subscript and once as superscript), then we automatically sum over all values of that index, e.g., $x^i e_i$ is written instead of $\sum_{i=1}^3 x^i e_i$. Finally, the notation $a \lesssim b$ indicates that there exists a constant $C>0$ (possibly depending on a parameter) such that $a \leq C b$. If 
$a \lesssim b$ and $b \lesssim a$ holds, then we simply write $a \simeq b$. The decay estimate $A(t)\lesssim t^{-\frac{1}{2}+}$ of Theorem~\ref{maintheorem} means that for any $\varepsilon>0$ there exists a constant $C_\varepsilon>0$ such that $A(t)\leq C_\varepsilon t^{-\frac{1}{2}+\varepsilon}$.


\section{Preliminaries}
\label{sec1}

 We have reformulated the original cubic wave equation~\eqref{cwe} on the future light cone $\triangledown_0$ of the origin stated in Cartesian coordinates $(t,x)$ as a problem in hyperboloidal coordinates $(T,X)$ on the past light cone $\vartriangle_0$. Since we are interested in solutions close to the selfsimilar solution $v_0(t,x) = \sqrt{2} / t$, we further employ selfsimilar coordinates $(\tau,\xi)$, and we obtain an equivalent second order equation. In Section~\ref{sec2} this equation is then further transformed to an evolution problem of first order in $\tau$.

\subsection{The equation in similarity coordinates}
\label{ssec:sim}

 Since $u_0(T) = - \sqrt{2} / T$ is a selfsimilar solution, it is natural to employ the similarity coordinates
\begin{align}\label{coord2}
 \tau := - \log (-T) = -\log \frac{t}{t^2 - |x|^2}, \qquad \xi := \frac{X}{-T} = \frac{x}{t},
\end{align}
with $\tau \geq 0$ and $|\xi| < 1$, see Figure~\ref{fig2b}. The inverse transforms read
\begin{align*}
 T = - e^{-\tau}, \qquad X = e^{-\tau} \xi
\end{align*}
and 
\begin{align*}
 t = \frac{e^\tau}{1-|\xi|^2}, \qquad x = \frac{e^\tau \xi}{1-|\xi|^2},
\end{align*}
 respectively.
 The cubic wave equation transforms accordingly.

\begin{figure}[h]
\centering
\hspace*{.5cm}
\def\svgwidth{.5\columnwidth}
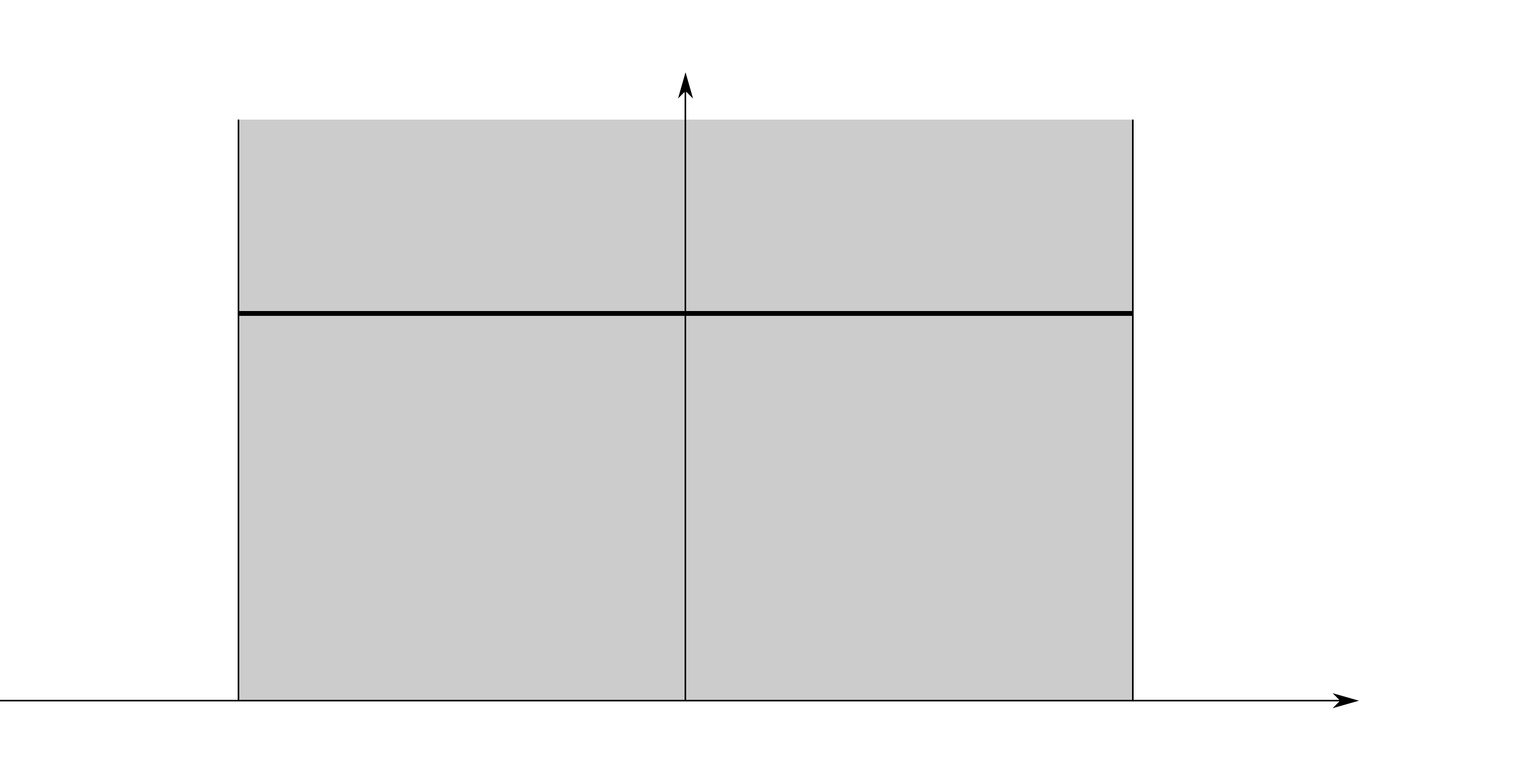
\caption{Domain of dependence in similarity coordinates $(\tau,\xi)$.}
\label{fig2b}
\end{figure}

\begin{lemma}\label{lem:transform2}
 The rescaled function
\[
 \psi(\tau,\xi) := e^{-\tau} u (-e^{-\tau},e^{-\tau} \xi)
\]
 solves the equation
\begin{align}\label{cwe-psi}
 \left[ \partial_\tau^2 + 3 \partial_\tau + 2 \xi^j \partial_{\xi^j} \partial_\tau - (\delta^{jk} - \xi^j \xi^k) \partial_{\xi^j} \partial_{\xi^k} + 4 \xi^j \partial_{\xi^j} + 2 \right] \psi(\tau,\xi) = \psi(\tau,\xi)^3
\end{align}
 on the domain $\{ (\tau,\xi) \in \RR \times \RR^3 \, | \, \tau \geq 0, |\xi| < 1 \}$ if and only if $u$ solves the cubic wave equation \eqref{cwe-u} on the past light cone $\vartriangle_0 \cap \{ T \geq -1 \}$ of the origin.
\end{lemma}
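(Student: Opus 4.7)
The lemma is fundamentally a chain-rule calculation, so my plan is to compute $\Box u+u^3$ in terms of $\psi$ by carefully applying the transformation $u(T,X)=e^{\tau}\psi(\tau,\xi)$ with $\tau=-\log(-T)$ and $\xi=-X/T$. The equivalence then follows because the resulting expression will be a nonvanishing factor times $L\psi-\psi^3$, where $L$ is the operator on the left of \eqref{cwe-psi}.

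The first step is to record the basic Jacobian data: from $\tau=-\log(-T)$ and $\xi^j=-X^j/T$ one gets $\partial\tau/\partial T=-1/T=e^{\tau}$, $\partial\xi^j/\partial T=X^j/T^2=e^{\tau}\xi^j$, $\partial\tau/\partial X^j=0$, and $\partial\xi^k/\partial X^j=-\delta^k_j/T=e^{\tau}\delta^k_j$. A convenient bookkeeping device is the observation that for any smooth $h(\tau,\xi)$ one has $\partial_T h=e^{\tau}Dh$ with $D:=\partial_\tau+\xi^j\partial_{\xi^j}$, and $\partial_{X^j}h=e^{\tau}\partial_{\xi^j}h$. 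With these formulas the cubic nonlinearity transforms trivially since $u^3=e^{3\tau}\psi^3$, so all the real work is in transforming the d'Alembertian.

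Next I would transform the two terms of $\Box u$ separately. Writing $u=e^{\tau}\psi$ and iterating $\partial_T h=e^{\tau}Dh$, I obtain $\partial_T u=e^{2\tau}(1+D)\psi$ and then $\partial_T^2 u=e^{3\tau}\bigl(D^2+3D+2\bigr)\psi$. Expanding $D^2=\partial_\tau^2+2\xi^j\partial_\tau\partial_{\xi^j}+\xi^j\xi^k\partial_{\xi^j}\partial_{\xi^k}+\xi^j\partial_{\xi^j}$ (the last term comes from commuting $\xi^j\partial_{\xi^j}$ past $\xi^k\partial_{\xi^k}$), and combining with the $3D$ and $2$ terms yields
\[
\partial_T^2 u=e^{3\tau}\bigl[\partial_\tau^2+3\partial_\tau+2\xi^j\partial_{\xi^j}\partial_\tau+\xi^j\xi^k\partial_{\xi^j}\partial_{\xi^k}+4\xi^j\partial_{\xi^j}+2\bigr]\psi.
\]
The spatial part is easier: since $e^{\tau}$ is $X$-independent, $\partial_{X^j}u=e^{2\tau}\partial_{\xi^j}\psi$ and a second application gives $\Delta_X u=e^{3\tau}\delta^{jk}\partial_{\xi^j}\partial_{\xi^k}\psi$.

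Finally I would combine the two to get
\[
\Box u=-\partial_T^2 u+\Delta_X u=-e^{3\tau}\bigl[\partial_\tau^2+3\partial_\tau+2\xi^j\partial_{\xi^j}\partial_\tau-(\delta^{jk}-\xi^j\xi^k)\partial_{\xi^j}\partial_{\xi^k}+4\xi^j\partial_{\xi^j}+2\bigr]\psi,
\]
so that $\Box u+u^3=-e^{3\tau}\bigl(L\psi-\psi^3\bigr)$ with $L$ the operator appearing in \eqref{cwe-psi}. Since $e^{3\tau}\neq 0$, the equation $\Box u+u^3=0$ on $\vartriangle_0\cap\{T\ge -1\}$ is equivalent to \eqref{cwe-psi} on $\{\tau\ge 0,\,|\xi|<1\}$, which is exactly the stated domain correspondence under $\tau=-\log(-T)$. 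No step is really hard; the only place to be careful is the commutator term in $D^2$ that produces the coefficient $4$ in front of $\xi^j\partial_{\xi^j}$ (rather than $3$), and the sign flip that converts $\delta^{jk}\partial_{\xi^j}\partial_{\xi^k}$ from $\Delta_X$ against the $\xi^j\xi^k\partial_{\xi^j}\partial_{\xi^k}$ term from $\partial_T^2$ into the characteristic $-(\delta^{jk}-\xi^j\xi^k)\partial_{\xi^j}\partial_{\xi^k}$ combination.
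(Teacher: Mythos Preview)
Your proof is correct and follows essentially the same chain-rule computation as the paper's own proof. The only cosmetic difference is your introduction of the auxiliary operator $D=\partial_\tau+\xi^j\partial_{\xi^j}$, which neatly packages the iteration $\partial_T^2 u=e^{3\tau}(D^2+3D+2)\psi$ and makes the origin of the coefficient $4$ in front of $\xi^j\partial_{\xi^j}$ transparent; the paper carries out the same computation termwise without naming $D$.
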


\begin{proof}
 The Jacobi matrix of the transformation from $(T,X)$ to $(\tau,\xi)$ reads, for $i,j \in \{1,2,3\}$,
\begin{align*}
 \begin{pmatrix}
  \frac{\partial \tau}{\partial T} & \frac{\partial \tau}{\partial X^i} \\
  \frac{\partial \xi^j}{\partial T} & \frac{\partial \xi^j}{\partial X^i}
 \end{pmatrix}
 = \begin{pmatrix}
    e^\tau & 0 \\
   \xi^j e^\tau & \delta^{ij} e^\tau
   \end{pmatrix}.
\end{align*}
 This implies that
\begin{align}\label{eq:dT}
 \partial_T u(T,X) = e^{2\tau} \left(1 + \partial_\tau + \xi^j \partial_{\xi^j} \right)  \psi(\tau,\xi)
\end{align}
 and
\[
 \partial_T^2 u(T,X) = e^{3 \tau} \left[ 2 + 3 \partial_\tau + 4 \xi^j \partial_{\xi^j}
+ \partial_\tau^2 + 2 \xi^j \partial_\tau \partial_{\xi^j} + \xi^j \xi^k \partial_{\xi^j} \partial_{\xi^k} \right] \psi(\tau,\xi),
\]
 as well as
\[
 \partial_{X^i} u(T,X) = e^{2\tau} \partial_{\xi^i} \psi(\tau,\xi), \qquad \partial_{X^i}^2 u(T,X) = e^{3\tau} \partial^2_{\xi^i} \psi(\tau,\xi).
\]
 Consequently, $u$ being a solution of $\Box u(T,X) + u(T,X)^3 =0$ is equivalent to $\psi(\tau,\xi)$ being a solution to the equation
\[
 e^{3 \tau} \left[ 2 + 3 \partial_\tau + 4 \xi^j \partial_{\xi^j}
+ \partial_\tau^2 + 2 \xi^j \partial_\tau \partial_{\xi^j} + (\xi^j \xi^k - \delta^{jk} ) \partial_{\xi^j} \partial_{\xi^k} \right] \psi(\tau,\xi) = e^{3\tau} \psi(\tau,\xi)^3,
\]
 which is \eqref{cwe-psi}.
\end{proof}

 The fundamental selfsimilar solution of \eqref{cwe-psi} is the constant solution $\psi_0(\tau,\xi) = \sqrt{2}$.

\subsection{The 3-parameter family of the selfsimilar solutions}
\label{ssec:lorentz}

 The cubic wave equation~\eqref{cwe-u} is invariant under time translations and Lorentz transformations. The time translation symmetry yields the one-parameter family $u_{T_0} (T,X) = \sqrt{2}/(T_0-T)$ of solutions. Moving the blowup time of $u_0 = - \sqrt{2}/T$ away from $T_0=0$ leads to either finite-time blowup ($T_0<0$) or dispersion ($T_0>0$). On the other hand, the blowup surface $\{ T \equiv 0 \}$ of $u_0$ is invariant under spatial translations and rotations. We fix the origin and allow for hyperbolic rotations of $u_0$ by applying Lorentz boosts for each direction. For the rapidity $a = (a^1,a^2,a^3) \in \RR^3$ the Lorentz transformation $\Lambda(a) = \Lambda^3(a^3)\Lambda^2(a^2)\Lambda^1(a^1)$ is given by
\begin{align*}
 \Lambda^j(a^j) \colon \RR \times \RR^3 &\to \RR \times \RR^3, \qquad j \in \{ 1,2,3 \}, \\
\begin{pmatrix}
 T \\ X^k
\end{pmatrix} &\mapsto
\begin{pmatrix}
 T \cosh a^j + X^j \sinh a^j \\
 X^k + \delta^{jk} \left( T \sinh a^j + X^j \cosh a^j - X^j \right) 
\end{pmatrix}.
\end{align*}
 Note that $\Lambda(a)$ maps the past light cone $\vartriangle_0$ into itself.
 Applied to the solution $u_0(T,X) = - \sqrt{2} / T$ of \eqref{cwe-u}, the Lorentz transformation $\Lambda(a)$ generates a 3-parameter family
\begin{align}\label{ua}
 u_{a} (T,X) := u_0(\Lambda(a)(T,X)) = \frac{\sqrt{2}}{A_0(a)(-T) - A_j(a) X^j}, \quad a \in \RR^3,
\end{align}
 of explicit blowup solutions, given by
\begin{align*}
 A_0(a) &= \cosh a^1 \cosh a^2 \cosh a^3, \\
 A_1(a) &= \sinh a^1 \cosh a^2 \cosh a^3, \\
 A_2(a) &= \sinh a^2 \cosh a^3, \\
 A_3(a) &= \sinh a^3.
\end{align*}
 The Lorentz transformations of the fundamental selfsimilar solution of \eqref{cwe-psi} are
\begin{align}\label{psi_a}
 \psi_{a}(\xi) := \psi_{a}(\tau,\xi) = \frac{\sqrt{2}}{A_0(a)-A_j(a)\xi^j}.
\end{align}

 In the original coordinates $(t,x)$ the Lorentz boosts applied to $v_0(t) = \sqrt{2}/t$ yield
\begin{align}\label{va}
 v_a(t,x) = \frac{1}{t^2 - |x|^2} u_a \left( -\frac{t}{t^2-|x|^2}, \frac{x}{t^2-|x|^2} \right)
          = \frac{\sqrt{2}}{A_0(a) t - A_j(a) x^j}.
\end{align}

\begin{remark}[$H^1\times L^2$-norm of selfsimilar solution on $\Sigma_T$]\label{rem:norm}
 The conserved energy for fixed time $t$,
\[
 E(v(t,.),\partial_t v(t,.)) := \frac{1}{2} \|(v(t,.),\partial_t v(t,.))\|^2_{\dot H^1 \times L^2 (\RR^3)} - \frac{1}{4} \|v(t,.)\|^4_{L^4(\RR^3)},
\]
 is infinite for the selfsimilar solution $v_0(t,x) = \sqrt{2} /t $. The $H^1 \times L^2$-norm of $v_0$ and its normal derivative $\nabla_n v_0$ on a hyperboloidal slice $\Sigma_T$ (introduced in Definition~\ref{def:nabla_n}) grows like
\begin{align*}
 \| v_0 \|_{H^1(\Sigma_T)} + \| \nabla_n v_0 \|_{L^2 (\Sigma_T)} &= \frac{1}{|T|} \| v_0 \|_{L^2(\Sigma_T)} + \| \nabla_n v_0 \|_{L^2(\Sigma_T)} \\
&= \frac{1}{|T|} \sqrt{\frac{2}{|T|^2} \vol (B_{|T|})} + \sqrt{\frac{2}{|T|^4} \vol (B_{|T|})} \simeq |T|^{-\frac{1}{2}}.
\end{align*}
 The same growth rate holds for the Lorentz transformations $v_a$ of $v_0$ which is why we normalize the decay estimate of $v-v_a$ in Theorem~\ref{maintheorem} by the factor $|T|^\frac{1}{2}$.
\end{remark}

\begin{remark}[$L^4_t L^4_x$-Strichartz norm of selfsimilar solution]
The selfsimilar solution $v_0(t,x) = \sqrt{2} / t$ satisfies
\begin{align*}
  \| v_0 \|_{L^4 (t,2t) L^4 (B_{(1-\delta)t})}^4 &= \int_t^{2t} \int_0^{(1-\delta)s} \frac{4}{s^4} r^2 dr ds = \int_t^{2t} \frac{4}{s^4} \frac{(1-\delta)^3 s^3}{3} ds \\
&= \frac{(1-\delta)^3}{3} 4 \log 2 \simeq 1,
 \end{align*}
and therefore also $\| v_a \|_{L^4 (t,2t) L^4 (B_{(1-\delta)t})}^4 \simeq 1$.
\end{remark}


\section{Modulation ansatz}
\label{sec2}

 We now rewrite \eqref{cwe} as a first-order evolution problem of the form
\begin{align}\label{cwe-Psi}
 \partial_\tau \Psi (\tau) = \L \Psi (\tau) + \N (\Psi(\tau)).
\end{align}
 and then insert the modulation ansatz $\Psi(\tau) = \Psi_{a(\tau)} + \Phi(\tau)$ corresponding to the $3$-parameter family of solutions $\psi_a$, $a \in \RR^3$, to study its stability in similarity coordinates $(\tau,\xi)$ introduced in Section~\ref{ssec:sim}. This leads to an evolution equation for the residual term $\Phi$.

\subsection{Evolution problem}
 Let $\Psi = (\psi_1,\psi_2)$ be defined by
\begin{equation}\label{Psi}
\begin{aligned}
 \psi_1 &= \psi, \\
 \psi_2 &= \partial_0 \psi + \xi^j \partial_j \psi + \psi,
\end{aligned}
\end{equation}
 where $\psi_2$ is chosen in a way to compensate for the $T$-derivative $\partial_T u(T,X) = e^{2\tau} [1+\partial_\tau + \xi^j \partial_{\xi^j}] \psi(\tau,\xi)$ (cf.\ equation~\eqref{eq:dT} in the proof of Lemma~\ref{lem:transform2}).
 Then \eqref{cwe-psi} is equivalent to the system
\begin{equation}\label{eq-psi}
 \begin{aligned}
 \partial_0 \psi_1 &= - \xi^j \partial_j \psi_1 - \psi_1 + \psi_2, \\
 \partial_0 \psi_2 &= \partial_j \partial^j \psi_1 - \xi^j \partial_j \psi_2 -2 \psi_2 + \psi_1^3.
 \end{aligned}
\end{equation}
which admits the family of static solutions 
\begin{equation}\label{Psi_a}
\begin{aligned}
 \psi_{a,1}(\xi) &= \psi_{a}(\xi) = 
                    \frac{\sqrt{2}}{A_0(a)-A_j(a)\xi^j}, \\
 \psi_{a,2}(\xi) &= \frac{\sqrt{2}A_0(a)}{(A_0(a)-A_j(a)\xi^j)^2},
\end{aligned}
\end{equation}
 being obtained from $\psi_a$ in \eqref{psi_a}. Thus let $\Psi_{a} := (\psi_{a,1},\psi_{a,2})$ be the term corresponding to the solution $\psi_0(\xi) = \sqrt{2}$ and its Lorentz boosts $\psi_a$. From \eqref{eq-psi} we read off the formal linear differential operator
\begin{align}\label{Ltilde}
 \wL u (\xi) = \begin{pmatrix}
                   - \xi^j \partial_j u_1(\xi) - u_1(\xi) + u_2(\xi) \\
                   \partial_j \partial^j u_1(\xi) - \xi^j \partial_j u_2(\xi) -2 u_2 (\xi)
                  \end{pmatrix}
\end{align}
and nonlinear term
\begin{align}\label{N}
 \N (u)(\xi) = \begin{pmatrix}
                    0 \\
                    u_1^3(\xi)
                   \end{pmatrix}.
\end{align}

\subsection{The free evolution}

 For the operator $\wL$ we define a suitable domain in a Hilbert space $\H$ and show how to obtain the linear operator $\L$ from $\wL$. Since the system
\[
 \frac{d}{d\tau} \Psi(\tau) = \wL \Psi(\tau)
\]
 equals the system for the residual term corresponding to the free wave equation (4-4) in \cite{DZ}[p.\ 474] we proceed analogously. Let $\BB$ denote the open unit ball in $\RR^3$ and the Hilbert space $\H$ be the completion of $(H^1(\BB) \cap \cC^1(\BB)) \times (L^2(\BB) \cap \cC(\BB))$ with respect to the induced norm $\| \cdot \|$ of the inner product
\begin{align}\label{H-norm}
 \langle u , v \rangle := \int_\BB \partial_j u_1 (\xi) \overline{\partial^j v_1(\xi)} \, d\xi
                          + \int_{\partial \BB} u_1(\omega) \overline{v_1(\omega)} \, d\sigma(\omega)
                          + \int_\BB u_2(\xi) \overline{v_2(\xi)} \, d\xi.
\end{align}
 By \cite{DZ}[Lemma 3.1] the first two terms are equivalent to the standard $H^1$-norm, i.e.,
\[
 \| f \|_{H^1(\BB)} \simeq \| f \|_{\dot H^1(\BB)} + \| f \|_{L^2(\partial \BB)},
\]
 hence the space $\H$ is equivalent to $H^1(\BB) \times L^2 (\BB)$ as a Banach space. The domain of $\wL$ is the subspace $\D(\wL) := (H^2(\BB) \cap \cC^2 (\overline{\BB} \setminus \{ 0 \})) \times ( H^1(\BB) \cap \cC^1 (\overline{\BB} \setminus \{ 0 \} )$. $\D(\wL)$ is dense in $\H$ since $\cC^\infty(\overline{\BB})$ is dense in both. From the semigroup approach carried out in \cite{DZ}[Prop.\ 4.1] we obtain the following result.

\begin{proposition}\label{prop:L}
 The operator $\wL \colon \D(\wL) \subseteq \H \to \H$ is densely defined and closable. Its closure $\L$ generates a strongly continuous semigroup $\S \colon [0,\infty) \to \cB(\H)$ which satisfies
\[
  \| \S(\tau) \| \leq e^{-\frac{\tau}{2}}, \quad \tau \in [0,\infty).
\]
 The spectrum $\sigma(\L)$ of $\L$ is contained in the shifted half plane $\{ z \in \CC \, | \, \Re z \leq - \frac{1}{2} \}$. \qed
\end{proposition}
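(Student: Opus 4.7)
The plan is to apply the Lumer--Phillips theorem to $\wL+\tfrac{1}{2}I$. Density of $\D(\wL)$ in $\H$ is essentially built into the definition: $\cC^\infty(\overline{\BB})\times \cC^\infty(\overline{\BB})\subset \D(\wL)$, and this subspace is dense in $H^1(\BB)\times L^2(\BB)$, which is equivalent to $\H$ as a Banach space by \cite{DZ}[Lemma 3.1]. Closability will then follow automatically from dissipativity of the shifted operator together with denseness of the resolvent range.

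The substantive step is to verify
\[
 \Re\langle \wL u,u\rangle \leq -\tfrac{1}{2}\|u\|^2, \qquad u\in\D(\wL).
\]
I would compute this by integrating by parts in each of the three summands of the inner product \eqref{H-norm}. The $\dot H^1$ piece pairs $\nabla u_1$ against $\nabla(-\xi^j\partial_j u_1-u_1+u_2)$, producing bulk terms $-2\int_\BB|\nabla u_1|^2$, a divergence contribution $-\Re\int_\BB\xi^k\partial_k(\tfrac12|\nabla u_1|^2)$, and a coupling $\Re\int_\BB\partial_ju_2\overline{\partial^ju_1}$. The $L^2(\BB)$ piece pairs $u_2$ against $\partial_j\partial^j u_1-\xi^j\partial_j u_2-2u_2$; the $\partial_j\partial^ju_1$ term is integrated by parts to cancel the coupling from the gradient piece, leaving a surface term on $\partial\BB$ with outward normal $\xi$. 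The boundary summand $\int_{\partial\BB}|u_1|^2\,d\sigma$ is designed precisely to absorb the remaining surface contributions: the sphere $|\xi|=1$ is characteristic for the first-order part of $\wL$, so the radial drift $\xi^j\partial_j$ produces an outward flux that combines cleanly with the surface term. The explicit zeroth-order factors $-u_1$ and $-2u_2$ in \eqref{Ltilde}, together with the divergence contributions $+\tfrac{3}{2}\int_\BB|\nabla u_1|^2$ and $+\tfrac{3}{2}\int_\BB|u_2|^2$ from the radial drifts, then furnish the sharp constant $-\tfrac{1}{2}$. This identity is precisely the dissipativity computation carried out in detail in \cite{DZ}[Prop.\ 4.1], which applies verbatim because the linear operator \eqref{Ltilde} coincides with the free-wave generator there.

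To complete Lumer--Phillips I would then verify the range condition: $\rg(\lambda - \wL)$ is dense in $\H$ for some $\lambda$ with $\Re\lambda > -\tfrac{1}{2}$. Given smooth $f = (f_1,f_2)$, the first component of $(\lambda - \wL) u = f$ solves algebraically, $u_2 = (\lambda+1)u_1+\xi^j\partial_j u_1 - f_1$, and substitution into the second component yields a single second-order equation for $u_1$ on $\BB$ whose principal symbol degenerates on $\partial\BB$ (again the characteristic nature of the light cone). For $\Re\lambda$ sufficiently large the associated bilinear form is coercive on the natural Hilbert space, by the same Morawetz-type identity used above, and Lax--Milgram supplies a unique weak solution; tracing back gives a preimage of $f$ under $\lambda - \wL$, hence dense range. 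Lumer--Phillips then delivers the closure $\L$ as generator of a strongly continuous semigroup with $\|\S(\tau)\|\leq e^{-\tau/2}$, and the spectral inclusion $\sigma(\L)\subseteq\{\Re z\leq -\tfrac{1}{2}\}$ follows from the Laplace-transform representation $\R_\L(\lambda) = \int_0^\infty e^{-\lambda\tau}\S(\tau)\,d\tau$, valid on the half-plane $\{\Re\lambda > -\tfrac{1}{2}\}$.

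The main obstacle is the delicate bookkeeping of boundary contributions in the dissipativity computation, where all three pieces of the inner product have to be matched carefully against the radial drifts and the characteristic boundary; fortunately, since \eqref{Ltilde} is algebraically identical to the free-wave generator of \cite{DZ}, this identity can be lifted from there, and the proof of the proposition reduces to invoking \cite{DZ}[Prop.\ 4.1].
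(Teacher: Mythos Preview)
Your proposal is correct and matches the paper's approach: the paper gives no independent proof but simply cites \cite{DZ}[Prop.\ 4.1], and you likewise reduce everything to that reference after sketching the Lumer--Phillips argument it contains. Your additional outline of the dissipativity and range computations is accurate commentary on what happens inside \cite{DZ}[Prop.\ 4.1], but adds nothing beyond what the paper itself invokes.
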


 This result implies a bound on the resolvent of $\L$ \cite{EN}[Theorem II.1.10].

\begin{corollary}\label{cor:resolvent}
 The resolvent $\R_\L(\lambda) = (\lambda - \L)^{-1}$ of $\L$ is a bounded operator for all $\lambda \in \rho(\L) = \CC \setminus \sigma(\L)$ that satisfies
\[
\pushQED{\qed} 
 \| \R_\L(\lambda) \| \leq \frac{1}{\Re \lambda + \frac{1}{2}}, \quad \Re \lambda > - \tfrac{1}{2}. \qedhere
\popQED
\]
\end{corollary}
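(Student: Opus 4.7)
The corollary is a direct consequence of the standard Hille--Yosida--type resolvent bound for generators of strongly continuous semigroups, applied to the semigroup $\S$ produced by Proposition~\ref{prop:L}. My plan is to invoke this well-known fact (this is precisely the content of \cite{EN}[Theorem II.1.10]) rather than re-prove it from scratch, but I will sketch the one-line derivation so that nothing is left opaque.

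The first step is to recall from Proposition~\ref{prop:L} that $\L$ generates a $C_0$-semigroup $\S$ with the growth bound
\[
 \|\S(\tau)\| \leq e^{-\tau/2}, \qquad \tau \geq 0.
\]
In particular the growth exponent $\omega_0(\S)$ of the semigroup is at most $-\tfrac12$, so the half-plane $\{\Re\lambda > -\tfrac12\}$ lies in the resolvent set $\rho(\L)$.

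The second step is to use the Laplace transform representation of the resolvent on $\{\Re\lambda > -\tfrac12\}$, namely
\[
 \R_\L(\lambda)u = \int_0^\infty e^{-\lambda\tau}\S(\tau)u \, d\tau, \qquad u \in \H,
\]
which is the standard identity for generators of strongly continuous semigroups. Combining this with the growth estimate from Proposition~\ref{prop:L} yields
\[
 \|\R_\L(\lambda)u\|_\H \leq \int_0^\infty e^{-(\Re\lambda)\tau}\|\S(\tau)\|\,\|u\|_\H\, d\tau
 \leq \|u\|_\H \int_0^\infty e^{-(\Re\lambda + \frac{1}{2})\tau}\,d\tau = \frac{\|u\|_\H}{\Re\lambda + \frac{1}{2}},
\]
which is exactly the claimed bound.

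There is no real obstacle here: the statement is a direct citation of \cite{EN}[Theorem II.1.10] with constants $M=1$ and $\omega=-\tfrac12$ read off from Proposition~\ref{prop:L}. The only minor point worth noting is that the Laplace integral above converges absolutely precisely because $\Re\lambda + \tfrac{1}{2} > 0$, which is the hypothesis of the corollary, and the resulting operator is easily verified to be a two-sided inverse of $\lambda - \L$ on $\D(\L)$, thereby identifying it with $\R_\L(\lambda)$.
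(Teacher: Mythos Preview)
Your proposal is correct and follows exactly the paper's approach: the paper simply cites \cite{EN}[Theorem II.1.10] and marks the corollary with a \qed, treating it as an immediate consequence of the semigroup bound in Proposition~\ref{prop:L}. Your Laplace-transform sketch just spells out that standard citation.
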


\subsection{Modulation ansatz}\label{ssec:mod}

 It is the aim of this section to write the system \eqref{cwe-psi} in the abstract form
\begin{align}\label{evoeq}
 \partial_\tau \Psi (\tau) = \L \Psi (\tau) + \N (\Psi(\tau))
\end{align}
 for a function $\Psi \colon [0,\infty) \to \H$ and to study the stability of the 3-parameter family $\Psi_a$ derived in \eqref{Psi_a}. This involves the modulation ansatz
\begin{align}\label{Psi1}
 \Psi(\tau) = \Psi_{a(\tau)} + \Phi(\tau)
\end{align}
 for a function $\Phi \colon [0, \infty) \to \H$, where we allow for the rapidity $a \in \RR^3$ to depend on $\tau$. The Lorentz boosts $\Psi_a$ are static solutions of \eqref{evoeq}, thus we know that $\Psi_{a(\tau)}$ satisfy $\L \Psi_{a(\tau)} + \N (\Psi_{a(\tau)}) = 0$. Plugging the ansatz \eqref{Psi1} back into \eqref{evoeq} thus yields the equation
\begin{align}\label{eq1}
 \partial_\tau \Phi(\tau) - \L \Phi(\tau) - \L'_{a(\tau)} \Phi(\tau)  = \N_{a(\tau)}(\Phi(\tau)) - \partial_\tau \Psi_{a(\tau)},
\end{align}
 where $\L'_{a(\tau)}$ is the linearized part of the nonlinearity $\N(\Psi(\tau))$ at $\Psi_{a(\tau)}$,
\begin{align}\label{Laprime}
 \L'_{a(\tau)} \Phi(\tau) := \begin{pmatrix}
                             0 \\
                             3 \psi^2_{a(\tau),1} \phi_1(\tau)
                            \end{pmatrix},
\end{align}
 and $\N_{a(\tau)}$ is the remaining nonlinearity,
\begin{align}\label{Na}
 \N_{a(\tau)} (\Phi(\tau)) := \; & \N (\Psi_{a(\tau)} + \Phi(\tau)) - \N(\Psi_{a(\tau)}) - \L'_{a(\tau)} \Phi(\tau) \nonumber \\
= \; & \begin{pmatrix}
        0 \\ 3 \psi_{a(\tau),1} \phi_1(\tau)^2 + \phi_1(\tau)^3
       \end{pmatrix}.
\end{align}
 In the formulation \eqref{eq1} the operator $\L'_{a(\tau)}$ on the left hand side of the equation depends on $\tau$. To avoid this $\tau$-dependence, we assume that $a_\infty := \lim_{\tau \to \infty} a(\tau)$ exists and only include the limiting operator $\L'_{a_\infty}$ on the left hand side (the existence of this limit is verified later). The modulation ansatz therefore leads to the equation
\begin{align}\label{modeq}
 \partial_\tau \Phi(\tau) - \L \Phi(\tau) - \L'_{a_\infty} \Phi(\tau)  = [ \L'_{a(\tau)} - \L'_{a_\infty} ] \Phi(\tau) + \N_{a(\tau)}(\Phi(\tau)) - \partial_\tau \Psi_{a(\tau)}
\end{align}
 for the perturbation term $\Phi$ of the solutions $\Psi_a$. The advantage of this formulation is that the left hand side of \eqref{modeq} consists besides $\partial_\tau \Phi$ only of linear and $\tau$-independent operations on $\Phi$, whereas the right hand side is expected to be small for large $\tau$.


\section{The linearized evolution}
\label{sec3}

 The aim is to first understand the linearized part of equation \eqref{modeq}, i.e., the ordinary differential system
\[
 \partial_\tau \Phi(\tau) = [ \L + \L'_{a_\infty} ] \Phi(\tau).
\]
 Since $a_\infty$ is fixed, we simply consider a vector $a \in \RR^3$ and define the linear operator
\begin{align}\label{La}
 \L_a :=  \L + \L'_{a},
\end{align}
 which is explicitly given as the closure of
\[
 \wL_a \Phi (\tau) = \begin{pmatrix}
                     - \xi^j \partial_j \phi_{1} (\tau) - \phi_{1} (\tau) + \phi_{2} (\tau) \\
                     \partial_j \partial^j \phi_1 (\tau) - \xi^j \partial_j \phi_2 (\tau) - 2 \phi_{2}(\xi) + 3 \psi^2_{1,a} \phi_1(\tau)
                    \end{pmatrix}.
\]
 Semigroup theory implies that the perturbation $\L_a$ of $\L$ generates also a strongly continuous one-parameter semigroup because $\L'_{a}$ is a bounded (even compact) operator (cf.\ \cite{DZ}[Lemma 4.3] for the case $a=0$).

\subsection{Spectral analysis for $\L_0$}

 For $a=0$ the Lorentz boost $\Lambda(0)$ is the identity and the solution considered is $u_0 (T,X) = - \sqrt{2} / T$. The analysis for this ground case has been carried out in \cite{DZ} and we already know that the spectrum of $\L_0$ satisfies (see \cite{DZ}[Prop.\ 4.6])
\begin{align}\label{sp0}
 \sigma (\L_0) \subseteq \{ z \in \CC \, | \, \Re z \leq - \tfrac{1}{2} \} \cup \{0,1\}.
\end{align}
 with $\{0,1\} \subseteq \sigma_p (\L_0)$ and $3$- and $1$-dimensional eigenspaces for $0$ and $1$, respectively. The 3-dimensional eigenspace corresponding to the eigenvalue $0$ is spanned by the eigenvectors
\[
 \q_{0,j} (\xi) = \begin{pmatrix}
                   \xi^j \\ 2 \xi^j
                  \end{pmatrix}, \quad j \in \{1,2,3\},
\]
the eigenspace corresponding to $1$ by the eigenvector
\[
 \p_{0} (\xi) = \begin{pmatrix}
               1 \\ 2
              \end{pmatrix}.
\]
 The analysis of the corresponding spectral projections shows that the algebraic multiplicities of the eigenvalues $0$ and $1$ are equal to their respective geometric multiplicities \cite{DZ}[Lemma 4.8].

\subsection{Spectral analysis for $\L_a$}
\label{ssec3.2}

Assuming now that $a \neq 0$ is small, we will observe that the spectrum of $\L_a$ is not so different from $\sigma(\L_0)$. More precisely, we work towards proving the following result.

\begin{proposition}\label{La-spectrum}
 Fix $\eps \in ( 0, \frac{1}{2})$. For $a \in \RR^3$ sufficiently small the spectrum of $\L_a$ satisfies
\[
 \sigma(\L_a) \subseteq \{ z \in \CC \, | \, \Re z < \eetilde \} \cup \{ 0,1 \}
\]
with $\{ 0,1 \} \subseteq \sigma_p (\L_a)$.
\end{proposition}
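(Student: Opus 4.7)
The strategy is to combine explicit eigenvector constructions from the continuous symmetries of \eqref{cwe-u} with spectral perturbation theory around the spectrum of $\L_0$ recorded in \eqref{sp0}. For the eigenvalue $0$, since $\Psi_a$ is a static solution of the evolution system \eqref{evoeq}, one has $\L\Psi_a + \N(\Psi_a) = 0$. Differentiating this identity in $a^j$ and noting that $D_\Psi\N(\Psi_a) = \L'_a$ by \eqref{Laprime} yields $\L_a \partial_{a^j}\Psi_a = 0$ for $j \in \{1,2,3\}$; a direct computation from \eqref{Psi_a} shows that these three vectors lie in $\D(\L_a)$, are linearly independent for small $a$, and reduce to the null modes $\q_{0,j}$ at $a=0$. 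For the eigenvalue $1$ I invoke the time-translation symmetry of \eqref{cwe-u}: the family $u_a(T-T_0,X)$ solves \eqref{cwe-u} for each $T_0$ and equals $u_a$ at $T_0 = 0$. Passing to similarity coordinates and differentiating in $T_0$ at $T_0=0$ produces a mode of the form $e^\tau g_a(\xi)$ where $g_a$ deforms continuously to $-\sqrt{2}\p_0$ as $a \to 0$, so that $g_a$ is an eigenvector of $\L_a$ with eigenvalue $1$. This establishes $\{0,1\} \subseteq \sigma_p(\L_a)$.

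For the localization of the rest of the spectrum, observe that $\L'_a$ is multiplication by the smooth bounded function $3\psi_{a,1}^2$ into the second component, and factors through the compact Sobolev embedding $H^1(\BB) \hookrightarrow L^2(\BB)$, hence is compact on $\H$. By Weyl's theorem on compact perturbations, the essential spectrum satisfies $\sigma_{\mathrm{ess}}(\L_a) = \sigma_{\mathrm{ess}}(\L) \subseteq \{\Re z \leq -\tfrac{1}{2}\}$, so in the strip $\{\Re z \geq \eetilde\}$ only isolated eigenvalues of finite algebraic multiplicity can occur. Since $a \mapsto \L'_a$ is continuous in operator norm, the standard Riesz-projection argument applied on small contours around the isolated eigenvalues $0$ and $1$ of $\L_0$ shows that for sufficiently small $a$ the total algebraic multiplicity of $\sigma(\L_a)$ inside each contour is preserved (namely $3$ and $1$); together with the explicit eigenvectors of the previous paragraph this forces those clusters to coincide with $\{0\}$ and $\{1\}$. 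To exclude eigenvalues in the remainder of the strip I use the factorization $\lambda - \L_a = (\lambda - \L)(I - \R_\L(\lambda)\L'_a)$: Corollary \ref{cor:resolvent} combined with the semigroup bound $\|\S(\tau)\| \leq e^{-\tau/2}$ yields that $\R_\L(\lambda) \to 0$ strongly as $|\Im\lambda|\to\infty$ (by Riemann--Lebesgue applied to the Laplace transform of $\S$), and compactness of $\L'_a$ upgrades this to $\|\R_\L(\lambda)\L'_a\| \to 0$ in operator norm, so that Neumann series furnishes invertibility outside a bounded region; the bounded intermediate part is handled by norm-resolvent continuity of $\L_a$ at $a=0$.

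The main obstacle is precisely this last step: Corollary \ref{cor:resolvent} supplies only the uniform bound $\|\R_\L(\lambda)\| \leq 1/\eps$ on the strip, with no decay in the imaginary direction, so the smallness of $\R_\L(\lambda)\L'_a$ at large $|\Im\lambda|$ has to be extracted genuinely from the compactness of $\L'_a$ rather than from any operator-norm smallness of $\L'_a$ itself. A secondary subtlety is the regularity and independence check for $\partial_{a^j}\Psi_a$ and the eigenvector $g_a$, which must be performed directly from the rational formulas in \eqref{Psi_a} and their time-translated analogs to confirm that these vectors indeed lie in $\D(\L_a)$ and remain linearly independent for all sufficiently small $a$.
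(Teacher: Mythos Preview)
Your argument is correct, and the eigenvector construction together with the Riesz-projection continuity near $\{0,1\}$ matches the paper exactly (their Lemma~\ref{LaEV} and the appeal to \cite{K}[Sec.~IV.3.5]). The genuine difference lies in how you control the resolvent for large $|\lambda|$ in the strip $\Omega^{\eps}$. The paper exploits a \emph{structural} feature of $\L$: writing out the first component of $(\lambda-\L)u=f$ shows directly that $\|(\R_\L(\lambda)f)_1\|_{L^2(\BB)}\lesssim |\lambda|^{-1}\|f\|$, whence $\|\L_0'\R_\L(\lambda)\|\lesssim |\lambda|^{-1}$ and a Neumann series closes (their Lemma~\ref{Loresolvent}; see also the Remark following the proof of Proposition~\ref{La-spectrum}). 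Your route is instead purely abstract: strong convergence $\R_\L(\lambda)f\to 0$ as $|\Im\lambda|\to\infty$ via vector-valued Riemann--Lebesgue, upgraded to norm convergence of $\R_\L(\lambda)\L'_a$ by compactness of $\L'_a$. This buys generality---it would apply to any compact perturbation of a semigroup generator, not just this particular potential---at the cost of being non-quantitative and requiring one extra step you gloss over: the strong convergence must be uniform over $\Re\lambda$ in a bounded interval (large $\Re\lambda$ being handled trivially by the Hille--Yosida bound), which follows because $\alpha\mapsto e^{-\alpha\,\cdot}\S(\cdot)f$ is continuous into $L^1([0,\infty),\H)$ on compact $\alpha$-sets and the Fourier transform is uniformly continuous on compact subsets of $L^1$. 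With that detail filled in, both proofs are complete; the paper's is shorter and gives an explicit rate, yours is more portable.
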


Let us first note some properties about the operator $\L'_a$.

\begin{lemma}
 For $a \in \RR^3$ sufficiently small the operator $\L'_a$ is compact and $\L_a = \L + \L'_a$ is a closed operator which generates a strongly-continuous one-parameter semigroup $\S_a \colon [0, \infty) \to \cB(\H)$.
\end{lemma}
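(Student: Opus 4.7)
The plan is to verify the three assertions in order — first that $\L'_a$ is bounded (in fact compact), then that the standard bounded-perturbation theorem for $C_0$-semigroups delivers both the closedness of $\L_a$ and the existence of $\S_a$ in a single stroke.

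First I would inspect the explicit formula \eqref{Laprime}, namely $\L'_a \Phi = (0, 3\psi_{a,1}^2 \phi_1)$ with $\psi_{a,1}(\xi) = \sqrt{2}/(A_0(a)-A_j(a)\xi^j)$. Since $A_0(a)\to 1$ and $A_j(a)\to 0$ as $a\to 0$, the denominator stays uniformly bounded away from zero on $\overline\BB$ for $|a|$ sufficiently small; in particular $\psi_{a,1}\in \cC^\infty(\overline\BB)$ and $3\psi_{a,1}^2 \in L^\infty(\BB)$. Multiplication by this $L^\infty$ coefficient is bounded on $L^2(\BB)$, so $\L'_a$ is a bounded operator on $\H$.

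To upgrade boundedness to compactness, I would factor $\L'_a$ through the Rellich--Kondrachov embedding. By the description of $\H$ given after \eqref{H-norm}, the projection $(\phi_1,\phi_2)\mapsto \phi_1$ is bounded from $\H$ into $H^1(\BB)$; then $H^1(\BB) \hookrightarrow L^2(\BB)$ compactly; then multiplication by $3\psi_{a,1}^2$ is bounded on $L^2(\BB)$; and finally $f\mapsto(0,f)$ is a bounded inclusion into $\H$. The composition of bounded operators with one compact link is compact, so $\L'_a \in \cB(\H)$ is compact.

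Finally, I would apply the bounded perturbation theorem for generators of $C_0$-semigroups (Engel--Nagel, Theorem III.1.3): if $\L$ generates a $C_0$-semigroup on $\H$ (which is exactly Proposition~\ref{prop:L}) and $B\in \cB(\H)$, then $\L+B$ with domain $\D(\L)$ generates a $C_0$-semigroup as well. Applied with $B=\L'_a$, this gives that $\L_a = \L+\L'_a$ on $\D(\L_a)=\D(\L)$ generates a strongly-continuous semigroup $\S_a\colon [0,\infty)\to \cB(\H)$; since every semigroup generator is automatically closed, the closedness of $\L_a$ comes for free. The only place where smallness of $a$ enters is the uniform lower bound on the denominator of $\psi_{a,1}$ needed for the coefficient to lie in $L^\infty(\BB)$ — a minor continuity issue rather than a real obstacle, so the main work of the lemma is really just recognizing that it reduces to a textbook perturbation result.
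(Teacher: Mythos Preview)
Your proof is correct and essentially the same as the paper's: both use smoothness of $\psi_{a,1}$ on $\overline\BB$ for small $a$, the Rellich--Kondrachov embedding $H^1(\BB)\hookrightarrow L^2(\BB)$ for compactness of $\L'_a$, and the bounded perturbation theorem \cite[Theorem III.1.3]{EN} for the semigroup. The only cosmetic difference is that the paper cites \cite[Theorem IV.1.3]{K} separately for closedness of $\L_a$, whereas you (equivalently) extract closedness from the fact that semigroup generators are automatically closed.
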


 \begin{proof}
 The denominator of $\psi_{a,1}$ \eqref{Psi_a} consists of hyperbolic functions and is equal to $1$ for $a = 0$ and any $\xi \in \RR^3$. Hence $\psi_{a,1}^2 \in \cC^\infty(\overline{\BB})$ for $a$ sufficiently small. The first component of $\L'_a$ is zero, and the second component is a map
\[
 H^1(\BB) \to L^2(\BB)
\]
 and therefore compact due to the compact embedding $H^1 \subset\subset L^2$ (Rellich--Kondrakov Theorem).
 Since $\L$ is a closed operator, stability implies that $\L_a$ is also closed \cite{K}[Theorem IV.1.3].
 The bounded perturbation theorem of semigroup theory \cite{EN}[Theorem III.1.3] yields the existence of the semigroup $\S_a$.
\end{proof}

 To conclude that the spectrum of $\L_a$ can only change slightly as compared to that of $\L_0$ we analyze how near these operators are for small $a$. To this end we utilize a notion of nearness on the space of closed operators based on their graphs:

 Suppose $M,N$ are closed subspaces of a Banach space and $\SS_M,\SS_N$ are the unit spheres in $M$ and $N$, respectively. If both $M$ and $N$ are nonzero (or both are zero) we define
\[
 \delta_\mathrm{mf}(M,N) := \sup_{u \in \SS_M} \operatorname{dist} (u, \SS_N),
\]
 otherwise
\[
 \delta_\mathrm{mf}(0,N) := 0 \quad \text{for any } N, \quad \delta_\mathrm{mf}(M,0) := 1 \quad \text{for } M \neq 0.
\]
 The \emph{gap} $\hat\delta$ between the manifolds $M$ and $N$ is the symmetrization of $\delta$, i.e.,
\[
 \hat\delta_\mathrm{mf}(M,N) := \max \left( \delta_\mathrm{mf} (M,N), \delta_\mathrm{mf}(N,M) \right).
\]
 For any two closed operators $\T$ and $\S$ from the Banach spaces $X$ to $Y$ the \emph{operator gap} $\hat \delta$ between $\T$ and $\S$ is now defined as
\[
 \hat \delta (\T,\S) := \hat \delta_\mathrm{mf} ( \graph(\T), \graph(\S) ),
\]
 where $\hat \delta_\mathrm{mf}$ measures the gap between the closed linear manifolds $\graph(\T), \graph(\S)$ of the product space $X \times Y$. See \cite{K}[Sec.\ IV.2.1 and IV.2.4] for the details on the construction of $\hat \delta_\mathrm{mf}$ and $\hat \delta$.

\begin{lemma}\label{Laconv1}
 The closed operators $\L_a$ and $\L_b$ satisfy
\begin{align*}
 \hat \delta (\L_a,\L_b) &\leq \| \L'_a - \L'_b \|,
\end{align*}
 globally and for $a,b$ sufficiently small moreover
\[
 \| \L_a - \L_b \| = \| \L'_a - \L'_b \| \lesssim |a-b|.
\]
 In particular, $\L_a$ converges to $\L_0$ in the generalized sense of \cite{K}[Sec.\ IV.2.4].
\end{lemma}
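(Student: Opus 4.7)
The starting observation is that $\L'_a$ is defined on all of $\H$ and bounded there (in fact compact, as just noted), so on the common domain $\D(\L_a) = \D(\L) = \D(\L_b)$ the difference $\L_a - \L_b$ equals $\L'_a - \L'_b$, which extends to a bounded operator on $\H$. This is the key structural fact: two closed operators, both equal to $\L$ plus a bounded perturbation, differ by something bounded on the whole space even though they individually are unbounded. In particular, the notation $\|\L_a-\L_b\|$ is meaningful via this bounded extension and equals $\|\L'_a-\L'_b\|$, giving the equality in the second assertion.

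\textbf{Gap estimate.} For any unit vector $u = (\Phi, \L_a \Phi) \in \graph(\L_a)$ in $\H \times \H$ (so in particular $\|\Phi\|_\H \leq 1$), the vector $v := (\Phi, \L_b \Phi)$ lies in $\graph(\L_b)$ and
\[
\|u-v\|_{\H\times\H} = \|(\L'_a - \L'_b)\Phi\|_\H \leq \|\L'_a - \L'_b\|\,\|\Phi\|_\H \leq \|\L'_a-\L'_b\|.
\]
Projecting $v$ onto the unit sphere of $\graph(\L_b)$ by the standard device from Kato Chapter IV.2 controls $\operatorname{dist}(u,\SS_{\graph(\L_b)})$ by the same quantity (possibly up to an absorbable constant). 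Interchanging $a \leftrightarrow b$ gives the other side of the gap, so $\hat\delta(\L_a,\L_b) \leq \|\L'_a-\L'_b\|$ as claimed.

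\textbf{Lipschitz estimate and generalized convergence.} It remains to estimate $\|\L'_a-\L'_b\|$. The operator acts by multiplying $\phi_1$ by $3(\psi_{a,1}^2 - \psi_{b,1}^2)$ in the second component. Writing $\psi_{a,1}(\xi)=\sqrt 2/(A_0(a)-A_j(a)\xi^j)$, the denominator equals $1$ at $a=0$ uniformly in $\xi\in\overline\BB$ and the $A_i$ are real analytic in $a$; hence, for $a,b$ in a sufficiently small neighborhood of $0$, the map $a \mapsto \psi_{a,1}^2$ is smooth into $L^\infty(\overline\BB)$, so the mean value theorem gives $\|\psi_{a,1}^2-\psi_{b,1}^2\|_{L^\infty(\BB)} \lesssim |a-b|$. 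Combining this with the embedding $H^1(\BB)\hookrightarrow L^2(\BB)$ and the norm equivalence $\H \simeq H^1(\BB)\times L^2(\BB)$ yields
\[
\|(\L'_a-\L'_b)\Phi\|_\H \lesssim |a-b|\,\|\phi_1\|_{L^2(\BB)} \lesssim |a-b|\,\|\Phi\|_\H,
\]
that is, $\|\L'_a-\L'_b\|\lesssim|a-b|$. Combined with the gap estimate, $\hat\delta(\L_a,\L_0)\lesssim|a| \to 0$, which is by definition generalized convergence in the sense of Kato \textsection IV.2.4. The only non-routine point is the book-keeping between the unit-sphere and unit-ball versions of $\delta_{\mathrm{mf}}$, which affects only constants and is absorbed in the $\leq$ of the gap bound.
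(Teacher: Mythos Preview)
Your proof is correct and follows essentially the same route as the paper. The paper simply cites Kato's Theorem~IV.2.14 for the gap bound $\hat\delta(\L_a,\L_b)\le\|\L'_a-\L'_b\|$, whereas you spell out its proof by hand via the graph comparison $(\Phi,\L_a\Phi)\mapsto(\Phi,\L_b\Phi)$; the Lipschitz estimate for $\|\L'_a-\L'_b\|$ is argued identically in both.

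One minor comment: your hedging about an ``absorbable constant'' in the gap estimate is unnecessary. The standard gap $\delta(M,N)$ in Kato~IV\S2.1 uses $\operatorname{dist}(u,N)$ to the \emph{subspace} $N$, not to its unit sphere, so your vector $v=(\Phi,\L_b\Phi)\in\graph(\L_b)$ already witnesses $\operatorname{dist}(u,\graph(\L_b))\le\|\L'_a-\L'_b\|$ exactly, with no projection step and no constant. (The paper's display with $\SS_N$ is somewhat informal; the reference to Kato makes the intended definition clear.) With that clarification the clean inequality $\hat\delta(\L_a,\L_b)\le\|\L'_a-\L'_b\|$ follows on the nose, matching the statement.
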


\begin{proof}
 The difference $\L'_a - \L'_b$ of the two compact operators is compact, in particular bounded. Therefore it follows immediately by \cite{K}[Theorem IV.2.14] that
\[
 \hat \delta (\L_a,\L_b) = \hat \delta (\L_a,\L_a + (\L'_b - \L'_a)) \leq \| \L'_a - \L'_b \|.
\]
 The second inequality is due to the specific form of $\L'_a$ involving $a$ only in the square of the corresponding Lorentz boosted solution $\psi_a$ \eqref{psi_a}, which basically involves hyperbolic functions. Hence the operator $\L'_a$ is locally Lipschitz continuous in $a$ with respect to the operator norm.
\end{proof}

 Since the perturbation $\L'_a-\L'_0$ does not commute with the operator $\L_0$ we cannot expect continuous dependence of the spectrum, however, upper semicontinuity still holds in a weak sense~\cite{K}[Theorem IV.3.1]. In other words, the spectrum of $\L_a$ cannot extend to suddenly compared to that of $\L_0$, but it may well shrink suddenly. Moreover, it is true that each separated part of the spectrum $\sigma(\L_0)$ is upper semicontinuous \cite{K}[Theorem IV.3.16] and each finite system of eigenvalues changes continuously \cite{K}[Sec.\ IV.3.5] just as in the finite-dimensional case. It therefore remains to control the resolvent set on the unbounded domain away from the shifted half plane and the eigenvalues.

 Fix $\eps \in ( 0, \frac{1}{2})$ and $\lambda_0>1$. To simplify the notation in the following proofs we introduce the unbounded region
\begin{align} \Omega^\eps := \{ z \in \CC \, | \, \Re z \geq \eetilde \} \end{align}
 and the bounded region
\begin{align}\label{om:lambda0}
 \Omega^\eps_{\lambda_0} := \{ \lambda \in \CC \, | \, \Re \lambda \geq \eetilde \text{ and } |\lambda| \leq \lambda_0 \},
\end{align}
 which are sketched in Figure~\ref{fig3}.

\begin{figure}[h]
\centering
\def\svgwidth{.9\columnwidth}
\hspace*{3.5cm} 
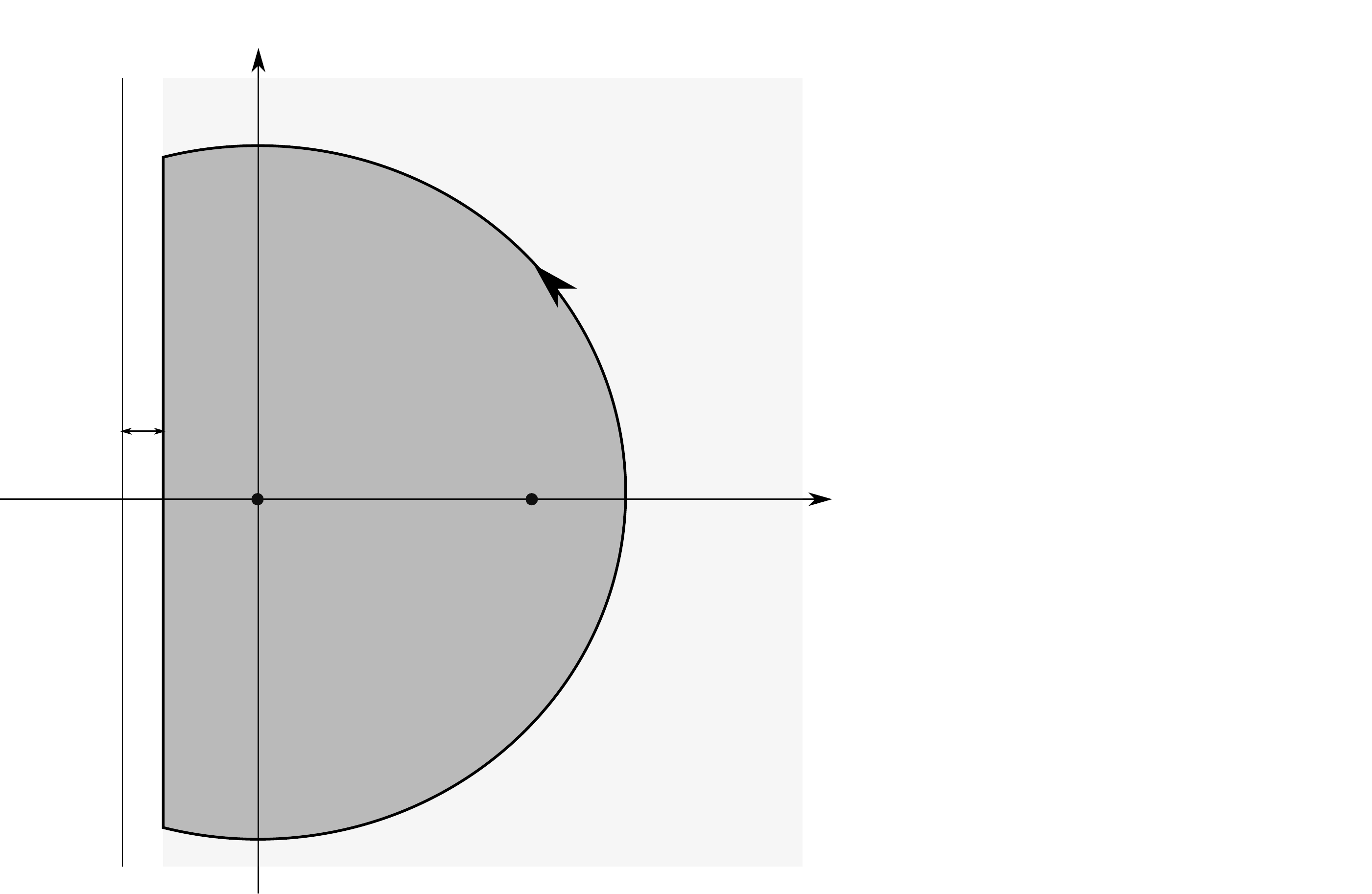
\caption{The domains $\Omega^\eps$ and $\Omega^\eps_{\lambda_0}$ for fixed $\eps \in (0,\frac{1}{2})$ and $\lambda_0>1$. $\Omega^\eps$ is an unbounded region away from the boundary $-\frac{1}{2}$ of the essential spectrum. The compact region $\Omega^\eps_{\lambda_0}$ includes the eigenvalues $0$ and $1$.}
\label{fig3}
\end{figure}

\begin{lemma} \label{Loresolvent}
 Fix $\eps \in (0,\frac{1}{2})$. There exist constants $c>0, \lambda_0 >1$ such that the resolvent $\R_{\L_0}(\lambda)$ is uniformly bounded by
\[
 \| \R_{\L_0}(\lambda) \| \leq c
\]
 for all $\lambda \in \Omega^\eps \setminus \Omega^\eps_{\lambda_0}$.
\end{lemma}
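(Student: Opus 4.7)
The plan is to use the factorization
\[
\lambda - \L_0 = (\lambda-\L)\bigl(I - \R_\L(\lambda)\L'_0\bigr)
\]
valid on $\D(\L_0)=\D(\L)$, so that whenever $I-\R_\L(\lambda)\L'_0$ is invertible on $\H$ one obtains the representation $\R_{\L_0}(\lambda)=\bigl(I-\R_\L(\lambda)\L'_0\bigr)^{-1}\R_\L(\lambda)$. Corollary~\ref{cor:resolvent} furnishes the uniform bound $\|\R_\L(\lambda)\|\leq 1/\eps$ for every $\lambda\in\Omega^\eps$, and \eqref{sp0} together with the choice $\lambda_0>1$ ensures that $\sigma(\L_0)\cap(\Omega^\eps\setminus\Omega^\eps_{\lambda_0})=\emptyset$, so the real work is to produce a uniform bound on the Neumann-series inverse for $|\lambda|$ sufficiently large.

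The crucial step is to show that $\|\R_\L(\lambda)\L'_0\|\to 0$ as $|\lambda|\to\infty$ within $\Omega^\eps$. Since $\L'_0$ has already been shown to be compact, this reduces to proving strong convergence $\R_\L(\lambda)\to 0$: a uniformly bounded family of operators tending to zero strongly does so in operator norm when composed with a compact operator, because the compact $\L'_0$ sends the unit ball into a precompact set on which strong pointwise convergence is automatically uniform. For $u\in\D(\L)$ the identity
\[
\R_\L(\lambda)u = \frac{u}{\lambda} + \frac{\R_\L(\lambda)\L u}{\lambda},
\]
a consequence of $\R_\L(\lambda)(\lambda-\L)=I$ on $\D(\L)$, together with $\|\R_\L(\lambda)\|\leq 1/\eps$ yields $\|\R_\L(\lambda)u\|\leq|\lambda|^{-1}(\|u\|+\|\L u\|/\eps)\to 0$. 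Density of $\D(\L)$ in $\H$ and the uniform resolvent bound then promote this to strong convergence on all of $\H$.

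Choosing $\lambda_0>1$ so large that $\|\R_\L(\lambda)\L'_0\|\leq 1/2$ for every $\lambda\in\Omega^\eps$ with $|\lambda|>\lambda_0$, the Neumann series produces $\|(I-\R_\L(\lambda)\L'_0)^{-1}\|\leq 2$, and combining with the bound on $\R_\L(\lambda)$ one obtains $\|\R_{\L_0}(\lambda)\|\leq 2/\eps$ throughout $\Omega^\eps\setminus\Omega^\eps_{\lambda_0}$. The delicate point is the strong decay of $\R_\L(\lambda)$ along vertical strips where $\Re\lambda$ need not tend to infinity; the first-order identity above handles this cleanly by trading a power of $\lambda$ against the uniform resolvent bound stemming from the semigroup estimate in Proposition~\ref{prop:L}.
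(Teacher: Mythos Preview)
Your argument is correct, and the overall shape---second resolvent factorization plus a Neumann series once the perturbative term is shown to be small---matches the paper's proof. The route to smallness, however, is genuinely different.

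The paper works with the mirror factorization $\lambda-\L_0=[I-\L'_0\R_\L(\lambda)](\lambda-\L)$ and proves the quantitative bound $\|\L'_0\R_\L(\lambda)\|\lesssim|\lambda|^{-1}$ directly from the explicit form of the equation $(\lambda-\L)u_f=f$: reading off the first component gives $|\lambda|\,\|u_{f,1}\|_{L^2}\lesssim\|u_f\|+\|f\|$, and since $\L'_0$ only sees the first component in $L^2$ the estimate follows. This is a structural argument specific to the operator at hand.

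Your argument is softer and more abstract: you use only that $\L'_0$ is compact and that $\R_\L(\lambda)\to 0$ strongly on $\Omega^\eps$ (the latter following from the generator identity $\R_\L(\lambda)u=\lambda^{-1}u+\lambda^{-1}\R_\L(\lambda)\L u$ on $\D(\L)$ together with the uniform Hille--Yosida bound and density). The precompactness of $\L'_0(B_1)$ upgrades strong convergence to norm convergence of $\R_\L(\lambda)\L'_0$. This works for \emph{any} compact perturbation of a semigroup generator with a uniform resolvent bound on a half-plane, so you have in effect proved a more general statement; what you lose is the explicit $|\lambda|^{-1}$ rate, which the paper does not actually need downstream.
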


\begin{proof}
 Consider the shifted half plane $\Omega^\eps \subseteq \rho (\L) \cap \rho (\L_0)$. Corollary~\ref{cor:resolvent} implies that the free resolvent $\R_\L(\lambda)$ is a bounded operator which satisfies
\begin{align}\label{eq2}
 \| \R_\L(\lambda) \| \leq \frac{1}{\Re \lambda + \frac{1}{2}} \leq \frac{1}{\eps},
\end{align}
 \emph{uniformly\/} for $\lambda \in \Omega^\eps$. The resolvent $\R_{\L_0}(\lambda)$ can be related to the free resolvent $\R_\L(\lambda)$ by the second resolvent identity
\[
 \lambda-\L_0 = [1-\L_0'\R_\L(\lambda)](\lambda-\L).
\]
 Therefore, if 
\begin{align}\label{eq3}
 \| \L_0' \R_\L (\lambda) \| < 1,
\end{align}
 the Neumann series $(1 - \L_0' \R_\L(\lambda))^{-1} = \sum_{k =0}^\infty (\L_0' \R_\L(\lambda))^k$ converges and the claim follows. We will show that \eqref{eq3} holds true for all $|\lambda|$ sufficiently large. For each $f \in \H$ the function $u_f := \R_\L(\lambda) f \in \D(\L)$ solves $(\lambda - \L) u_f = f$ by definition. The first component of this equation is
\begin{align}\label{stern}
 \lambda u_{f,1}(\xi) + \xi^j \partial_j u_{f,1}(\xi) + u_{f,1} (\xi) - u_{f,2}(\xi) = f_1(\xi).
\end{align}
 Therefore, for $\xi \in \BB$,
\begin{align*}
 |\lambda| \| u_{f,1} \|_{L^2 (\BB)} &\leq |\xi^j| \| \partial_j u_{f,1} \|_{L^2(\BB)} + \| u_{f,1} \|_{L^2(\BB)} + \| u_{f,2} \|_{L^2(\BB)} + \| f_1 \|_{L^2(\BB)} \\
&\lesssim \| u_{f,1} \|_{H^1(\BB)} + \| u_{f,2} \|_{L^2(\BB)} + \| f_1 \|_{L^2(\BB)} \\
&\lesssim \| u \|_{H^1 (\BB) \times L^2 (\BB)} + \|f\|_{H^1 (\BB) \times L^2 (\BB)},
\end{align*}
 and by inserting the definition of $u_f$ and using the fact that $\H$ is equivalent to $H^1(\BB) \times L^2(\BB)$ as a Banach space \cite{DZ}[Lemma 3.1], we obtain for $\lambda \in \Omega^\eps$ with \eqref{eq2},
\[
 \| ( \R_\L (\lambda) f )_1 \|_{L^2(\BB)} = \| u_{f,1} \|_{L^2(\BB)} \lesssim |\lambda|^{-1} \left( \| \R_\L (\lambda) f \| + \| f \| \right) \lesssim |\lambda|^{-1} \|f\|.
\]
 By the definition of $\L_0'$ \eqref{Laprime} and the fact that $\psi_0 \equiv \sqrt{2}$ is constant,
\[
 \| \L_0' \R_\L (\lambda) f \| = \| 3 \psi^2_{0,1} ( \R_\L (\lambda) f )_1 \|_{L^2(\BB)} \leq 6 
\| ( \R_\L (\lambda) f )_1 \|_{L^2(\BB)} \lesssim |\lambda|^{-1} \| f \|,
\]
 which implies \eqref{eq3} for $|\lambda|$ sufficiently large.
\end{proof}

\begin{corollary}\label{Laresolvent}
 Fix $\eps \in (0,\frac{1}{2})$. For $a$ sufficiently small we have
\[
 \Omega^\eps \setminus \Omega^\eps_{\lambda_0} \subseteq \rho(\L_a) .
\]
\end{corollary}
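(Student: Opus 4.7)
The plan is to invoke the standard perturbative argument via the second resolvent identity, using the uniform resolvent bound from Lemma~\ref{Loresolvent} together with the Lipschitz estimate of Lemma~\ref{Laconv1}. Fix $\lambda \in \Omega^\eps \setminus \Omega^\eps_{\lambda_0}$. Writing $\L_a = \L_0 + (\L'_a - \L'_0)$, I would factor
\[
 \lambda - \L_a = \bigl[1 - (\L'_a - \L'_0)\R_{\L_0}(\lambda)\bigr](\lambda - \L_0).
\]

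The next step is to show that the bracketed operator is invertible via a Neumann series, uniformly in $\lambda$ on the desired set. By Lemma~\ref{Loresolvent} there is a constant $c > 0$ with $\|\R_{\L_0}(\lambda)\| \leq c$ for all $\lambda \in \Omega^\eps \setminus \Omega^\eps_{\lambda_0}$, and by Lemma~\ref{Laconv1} we have $\|\L'_a - \L'_0\| \lesssim |a|$ for $a$ sufficiently small. Combining these,
\[
 \|(\L'_a - \L'_0)\R_{\L_0}(\lambda)\| \leq \|\L'_a - \L'_0\|\,\|\R_{\L_0}(\lambda)\| \lesssim c\,|a|,
\]
so by choosing $|a|$ small enough (independently of $\lambda$) this norm is strictly less than $1$. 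Hence the Neumann series
\[
 \bigl[1 - (\L'_a - \L'_0)\R_{\L_0}(\lambda)\bigr]^{-1} = \sum_{k=0}^\infty \bigl((\L'_a - \L'_0)\R_{\L_0}(\lambda)\bigr)^k
\]
converges in $\cB(\H)$, and one obtains the resolvent
\[
 \R_{\L_a}(\lambda) = \R_{\L_0}(\lambda)\bigl[1 - (\L'_a - \L'_0)\R_{\L_0}(\lambda)\bigr]^{-1}
\]
as a bounded operator. Therefore $\lambda \in \rho(\L_a)$ for every $\lambda \in \Omega^\eps \setminus \Omega^\eps_{\lambda_0}$, which is the claim.

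There is essentially no obstacle here beyond confirming that the smallness of $a$ can be chosen \emph{uniformly} in $\lambda$; this is exactly what Lemma~\ref{Loresolvent} guarantees by giving a single constant $c$ valid on the entire unbounded region $\Omega^\eps \setminus \Omega^\eps_{\lambda_0}$. The only subtlety worth flagging is that the threshold on $|a|$ depends on $c$ (hence on $\eps$ and $\lambda_0$), but since $\eps$ and $\lambda_0$ are fixed once and for all in the statement, this dependence is harmless.
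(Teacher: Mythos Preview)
Your proof is correct, but it takes a more elementary route than the paper's. The paper argues via Kato's gap framework: it estimates $\hat\delta(\lambda - \L_a, \lambda - \L_0) \leq \|\L'_a - \L'_0\| \lesssim |a|$ and then invokes the generalized stability theorem for bounded invertibility \cite{K}[Theorem IV.2.21], which guarantees that $\lambda - \L_a$ is invertible once $\hat\delta(\lambda - \L_a, \lambda - \L_0) < (1 + \|\R_{\L_0}(\lambda)\|^2)^{-1/2}$. You instead exploit directly that the perturbation $\L'_a - \L'_0$ is a \emph{bounded} operator, factor $\lambda - \L_a$ through $\lambda - \L_0$, and invert the multiplicative correction by a Neumann series. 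Your argument is shorter and fully self-contained, whereas the paper's gap-based approach is more general in principle (it would apply to perturbations that are not relatively bounded) but is unnecessary machinery here. Both proofs rely in the same essential way on the uniform resolvent bound of Lemma~\ref{Loresolvent} to make the smallness condition on $a$ independent of $\lambda$.
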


\begin{proof}
 Let $\lambda \in \Omega^\eps \setminus \Omega^\eps_{\lambda_0} \subseteq \rho(\L_0)$ and consider the resolvent $\R_{\L_0} (\lambda) = (\lambda - \L_0)^{-1}$. Just as in the proof of Lemma~\ref{Laconv1} it can be shown that
\[
 \hat \delta (\lambda - \L_a,\lambda - \L_0) \leq \| \L'_a - \L'_0 \| \lesssim | a |, \qquad \lambda \in \Omega^\eps \setminus \Omega^\eps_{\lambda_0},
\]
 holds for $a$ sufficiently small. By Lemma~\ref{Loresolvent}, $\R_{\L_0}$ is uniformly bounded by $c$ on $\Omega^\eps \setminus \Omega^\eps_{\lambda_0}$. Therefore, for $a$ sufficiently small, 
\[
 \hat\delta (\lambda - \L_a, \lambda - \L_0) < \frac{1}{\sqrt{1+c^2}} \leq (1 + \| \R_{\L_0} (\lambda) \|^2)^{-\tfrac{1}{2}}, \qquad \lambda \in \Omega^\eps \setminus \Omega^\eps_{\lambda_0},
\]
 and the generalized stability theorem for bounded invertibility \cite{K}[Theorem IV.2.21] implies that $\lambda - \L_a$ is invertible and the inverse $\R_{\L_a}(\lambda)$ is bounded as well.
\end{proof}

 Lemma~\ref{Laconv1} and Lemma~\ref{Loresolvent} imply the existence of a uniform bound for all resolvents $\R_{\L_a}(\lambda)$ due to \cite{K}[Theorem IV.3.15].

\begin{corollary} \label{Laresolvent_bound}
 Fix $\eps \in (0,\frac{1}{2})$. There exist constants $c>0, \lambda_0>1$ such that for all $a \in \RR^3$ sufficiently small the resolvents $\R_{\L_a}(\lambda)$ are uniformly bounded by
\[
\pushQED{\qed}
 \| \R_{\L_a} (\lambda) \| \leq c, \qquad \lambda \in \Omega^\eps \setminus \Omega^\eps_{\lambda_0}. \qedhere
\popQED
\]
\end{corollary}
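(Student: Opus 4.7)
The plan is to extract the uniform resolvent bound directly from Lemma~\ref{Loresolvent} and the Lipschitz estimate of Lemma~\ref{Laconv1}, essentially via a Neumann series argument with $\lambda$-independent constants.

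First, I fix $\eps \in (0,\tfrac{1}{2})$ and pick $c_0>0$, $\lambda_0>1$ as in Lemma~\ref{Loresolvent}, so that $\|\R_{\L_0}(\lambda)\| \leq c_0$ uniformly for $\lambda \in \Omega^\eps \setminus \Omega^\eps_{\lambda_0}$. By Lemma~\ref{Laconv1} there is $C>0$ such that $\|\L'_a - \L'_0\| \leq C|a|$ for all sufficiently small $a$. I then choose $\delta_0>0$ with $C\delta_0 c_0 \leq \tfrac{1}{2}$, so that
\[
 \|(\L'_a - \L'_0)\R_{\L_0}(\lambda)\| \leq \tfrac{1}{2}
\]
for every $|a|<\delta_0$ and every $\lambda \in \Omega^\eps \setminus \Omega^\eps_{\lambda_0}$.

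Next, for such $\lambda$ and $a$, I apply the second resolvent identity to factor
\[
 \lambda - \L_a = \bigl[1 - (\L'_a - \L'_0)\R_{\L_0}(\lambda)\bigr](\lambda - \L_0).
\]
The bracketed operator is invertible in $\cB(\H)$ via the Neumann series, with inverse bounded by $2$. Hence $\R_{\L_a}(\lambda) = \R_{\L_0}(\lambda)\bigl[1-(\L'_a-\L'_0)\R_{\L_0}(\lambda)\bigr]^{-1}$ exists (recovering Corollary~\ref{Laresolvent} on the way) and satisfies
\[
 \|\R_{\L_a}(\lambda)\| \leq 2c_0 =: c,
\]
uniformly in $a \in B_{\delta_0}$ and $\lambda \in \Omega^\eps \setminus \Omega^\eps_{\lambda_0}$. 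An equivalent route is to invoke \cite{K}[Theorem IV.3.15] directly: its hypotheses follow from the gap estimate $\hat\delta(\L_a,\L_0) \leq \|\L'_a - \L'_0\|$ in Lemma~\ref{Laconv1} together with the uniform bound from Lemma~\ref{Loresolvent}.

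The only delicate point is checking that the smallness threshold $\delta_0$ may be chosen independently of $\lambda$ across the \emph{unbounded} set $\Omega^\eps \setminus \Omega^\eps_{\lambda_0}$. This is automatic here, since the Lipschitz estimate in Lemma~\ref{Laconv1} is $\lambda$-free and the bound $c_0$ on $\|\R_{\L_0}(\lambda)\|$ is itself uniform in $\lambda$ on the relevant region. In particular, the genuine work has already been done in Lemma~\ref{Loresolvent} (where large-$|\lambda|$ decay of the first component of $\R_\L(\lambda)f$ was used to defeat the unboundedness of $\Omega^\eps$), so I do not anticipate a substantive obstacle in the corollary itself beyond quoting these ingredients in the correct order.
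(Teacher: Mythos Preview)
Your proof is correct and takes essentially the same approach as the paper: both rely on Lemma~\ref{Laconv1} and Lemma~\ref{Loresolvent}, and indeed the paper simply invokes \cite{K}[Theorem IV.3.15] directly, which you also mention as an equivalent route. Your explicit Neumann series argument is precisely what underlies that theorem in the present bounded-perturbation setting, so the two proofs differ only in that you unpack the citation.
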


 Within the region $\Omega_{\lambda_0}^\eps$ it remains to control the eigenvalues $0$ and $1$. The eigenspaces of $\L_a$ can be computed explicitly by taking into account the time translation invariance of the cubic wave equation as well as its linearization with respect to $a$. Since $\Psi_a$ is a solution of the equivalent evolution equation $\partial_\tau \Psi(\tau) = \L \Psi(\tau) + \N(\Psi(\tau))$ with trivial left hand side, $\partial_a \Psi_a$ is a solution of the linearized equation $0 = \L \partial_a \Psi_a + D \N (\Psi_a) \partial_a \Psi_a = \L_a (\partial_a \Psi_a)$ around $\Psi_a$ by the chain rule. This is how we obtain the eigenfunctions associated to the eigenvalue $0$. On the other hand, the time translated solution $u_a^\theta (T,X) = u_a (T-\theta,X)$ is also a solution to \eqref{cwe-u}. Therefore, differentiation with respect to $\theta$ of $\psi_a^\theta(\tau,\xi) = \frac{\sqrt{2} e^{-\tau}}{A_0(a)(\theta+e^{-\tau} - A_j(a)e^{-\tau}\xi^j}$ (at $0$) in the coordinates $(\tau,\xi)$ yields 
the eigenfunction for the eigenvalue $1$. We obtain the following formulas.

\begin{lemma}\label{LaEV}
 The linear operator $\L_a$ has eigenvalues $0$ and $1$ with eigenspaces spanned by the functions
\[
 \q_{a,j}(\xi) = \partial_{a^j} \Psi_a(\xi), \quad j \in \{1,2,3\},
\]
 and
\[
 \p_a(\xi) = \frac{A_0(a)}{(A_0(a) - A_j(a)\xi^j)^2}
            \begin{pmatrix}
             1 \\ \frac{2A_0(a)}{A_0(a) - A_j(a)\xi^j}
            \end{pmatrix},
\]
 respectively. The corresponding geometric and algebraic multiplicities coincide and are $3$ and $1$.
\end{lemma}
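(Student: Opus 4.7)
The plan is to exhibit each eigenfunction explicitly from a continuous symmetry of the nonlinear equation, and then use Kato's perturbation theory to promote the resulting ``at least'' multiplicity bounds to equalities via the generalized convergence $\L_a \to \L_0$ of Lemma~\ref{Laconv1}.

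For the eigenvalue $0$, since $\Psi_a$ is a static solution of $\partial_\tau \Psi = \L\Psi + \N(\Psi)$, it satisfies $\L\Psi_a + \N(\Psi_a) = 0$. For $a$ small the denominator $A_0(a) - A_j(a)\xi^j$ is close to $1$ on $\overline{\BB}$, so each $\q_{a,j} := \partial_{a^j}\Psi_a$ is smooth on $\overline{\BB}$ and lies in $\D(\L_a)$. Since $D\N(\Psi_a) = \L'_a$, differentiating the static equation in $a^j$ and commuting the derivative past the linear operator $\L$ yields
\[
\L_a \q_{a,j} = (\L + \L'_a)\partial_{a^j}\Psi_a = \partial_{a^j}\bigl(\L\Psi_a + \N(\Psi_a)\bigr) = 0.
\]
Linear independence of $\q_{a,1},\q_{a,2},\q_{a,3}$ for small $a$ is inherited by continuity from the manifest independence of $\q_{0,j}$.

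For the eigenvalue $1$ I would exploit time-translation invariance: $u_a^\theta(T,X) := u_a(T-\theta,X)$ again solves \eqref{cwe-u}, and in similarity coordinates
\[
\psi_a^\theta(\tau,\xi) = \frac{\sqrt{2}\,e^{-\tau}}{A_0(a)(e^{-\tau}+\theta) - A_j(a)e^{-\tau}\xi^j}.
\]
Differentiation at $\theta = 0$ yields the separable form $\partial_\theta \psi_a^\theta|_{\theta=0} = e^\tau \tilde{q}_a(\xi)$ with $\tilde{q}_a$ smooth on $\overline{\BB}$; applying the first-order construction \eqref{Psi} (i.e.\ computing $\partial_0\psi + \xi^j\partial_j\psi + \psi$ on $e^\tau \tilde{q}_a$) converts this into $e^\tau\p_a(\xi)$ with $\p_a$ matching the stated formula. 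Linearizing the evolution at $\Psi_a$ along the family $\Psi_a^\theta$ then identifies $\partial_\tau(e^\tau\p_a) = \L_a(e^\tau \p_a)$, that is $\L_a \p_a = \p_a$.

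For the multiplicity claim, by \eqref{sp0} and \cite{DZ}[Lemma 4.8] the eigenvalues $0$ and $1$ of $\L_0$ are isolated with algebraic multiplicities $3$ and $1$, and by Lemma~\ref{Laconv1} we have $\L_a \to \L_0$ in the generalized sense. Choosing small disjoint disks $D_0, D_1$ around $0$ and $1$ that meet $\sigma(\L_0)$ only at their centers, \cite{K}[Sec.\ IV.3.5, Theorem IV.3.16] gives that for $a$ small the total algebraic multiplicity of $\L_a$ inside $D_0$ (resp.\ $D_1$) remains $3$ (resp.\ $1$). Since the eigenfunctions constructed above supply geometric multiplicity $\geq 3$ (resp.\ $\geq 1$) at $0$ (resp.\ $1$), and geometric $\leq$ algebraic $\leq$ total, all inequalities saturate: no other eigenvalues lie in the disks, and algebraic equals geometric multiplicity. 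The main obstacle I anticipate is precisely this last semisimplicity step, since symmetry alone only delivers lower bounds and cannot rule out a priori the bifurcation of new eigenvalues from $0$ or $1$ or the emergence of Jordan blocks; the preservation of the total Riesz projection rank under generalized convergence is exactly the ingredient that handles this.
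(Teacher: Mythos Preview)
Your proposal is correct and follows essentially the same approach as the paper. The paper's proof is terser---it simply states that the eigenvalue equations ``can be easily verified'' (noting the identity $A_\mu(a)A^\mu(a)=-1$ for the $\p_a$ verification) and then invokes \cite{K}[Sec.\ IV.3.5] for the preservation of total multiplicity~$4$---but the symmetry-based derivation you give is exactly what the paper sketches in the paragraph preceding the lemma, and your multiplicity argument via the chain geometric $\leq$ algebraic $\leq$ total under generalized convergence is the same mechanism.
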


\begin{proof}
 It can be easily verified that $\L_a \q_{a,j} = 0$, $j \in \{1,2,3\}$, and $\L_a \p_a (\xi) = \p_a(\xi)$ hold. For the latter equation we make use of the identity $A_\mu(a) A^\mu(a) = - 1$ of hyperbolic functions. 

 The total multiplicity of the eigenvalues of $\L_0$ are $4$, and by \cite{K}[Sec.\ IV.3.5] this cannot change for $\L_a$ with $a$ small. The geometric multiplicities of the eigenvalues $0$ and $1$ are $3$ and $1$, respectively. Therefore no further eigenvalues exist.
\end{proof}

\begin{proof}[Proof of Proposition~\ref{La-spectrum}]
 It suffices to summarize the results obtained in this section and apply general perturbation theory for linear operators. Note first that the spectrum of $\L_0$ can be separated into two parts by the simple closed curve $\gamma_\eps$ surrounding the region $\Omega^\eps_{\lambda_0}$, see Figure~\ref{fig3}.
 By Lemma~\ref{Laconv1}, $\hat \delta (\L_0,\L_a) \lesssim |a|$ and therefore \cite{K}[Theorem IV.3.16] implies that the spectrum of $\L_a$ is likewise separated into two parts by the curve $\gamma_\eps$ for all $a$ sufficiently small. Within the region $\Omega^\eps_{\lambda_0}$ the eigenvalues of $\L_a$ are just those derived in Lemma~\ref{LaEV} (this also makes use of the continuity of a finite spectrum of eigenvalues \cite {K}[Sec.\ IV.3.5]). In the unbounded region $ \Omega^\eps \setminus \Omega^\eps_{\lambda_0}$ the operator $\lambda - \L_a$ is invertible by Corollary~\ref{Laresolvent}. Thus
\[
 \sigma(\L_a) \subseteq \{ z \in \CC \, | \, \Re z < \eetilde \} \cup \{ 0,1 \}. \qedhere
\]
\end{proof}

\begin{remark}
 The key result in this section is Lemma~\ref{Loresolvent}, which shows that the resolvent of $\L_0$ is bounded \emph{uniformly} on the unbounded region $\Omega^\eps \setminus \Omega^\eps_{\lambda_0}$ away from the spectrum. Such a result is not true for general linear operators but holds here because $\lambda$ appears as a prefactor in the left hand side of \eqref{stern}. Consequently, this is a structural property of the equation, independent of $\L'_a$.
\end{remark}

\subsection{Growth estimates for the linearized evolution}

 We establish the growth bounds for the semigroup $\S_a$ by partitioning $\H$ into disjoint parts, each related to a separated part of the spectrum of $\L_a$. For $a$ sufficiently small we find that the semigroup $\S_a$ has a 4-dimensional unstable subspace which is spanned by the generators of the time translation and three Lorentz boosts. On the remaining infinite-dimensional subspace the semigroup decays exponentially.

\medskip
 To separate the eigenvalues from the remaining unbounded part of the spectrum we define spectral projections $\Q_{a,j}$, $j \in \{1,2,3\}$, and  $\P_a$ to the eigenvalues $0$ and $1$, respectively. The rectifiable, simple closed curves $\gamma_0$ and $\gamma_1$, defined by
\[
 \gamma_0 (t) = \tfrac{1-2\eps}{3} e^{2\pi \ii t}, \quad \gamma_1 (t) = 1 + \tfrac{1}{2} e^{2\pi \ii t}, \quad t \in [0,1],
\]
 enclose the eigenvalues $0$ and $1$ and do not intersect each other, see Figure~\ref{fig4}. The corresponding spectral projections of $\L_a$ are defined by
\[
 \Q_{a} := \frac{1}{2\pi \ii} \int_{\gamma_0} \R_{\L_a} (z) dz, \quad \P_a := \frac{1}{2\pi \ii} \int_{\gamma_1} \R_{\L_a} (z) dz
\]

\begin{figure}[h]
\centering
\def\svgwidth{.8\columnwidth}
\hspace*{1cm} 
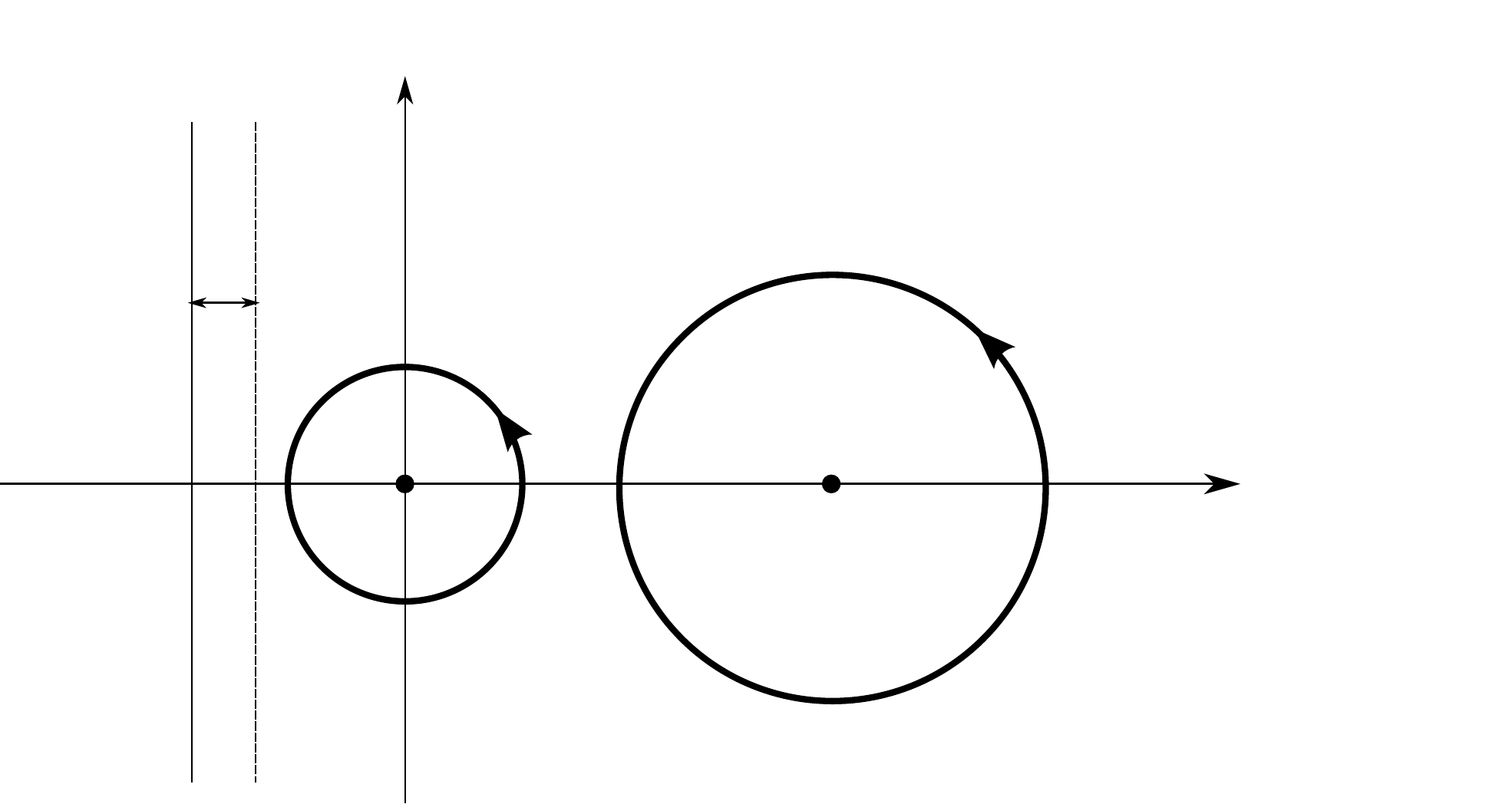
\caption{The spectral projections $\Q_a$ and $\P_a$ correspond to the eigenvalues $0$ and $1$, respectively.}
\label{fig4}
\end{figure}

 By Lemma~\ref{LaEV} we have that
\[
 \rg \Q_a = \langle \q_{a,1},\q_{a,2},\q_{a,3} \rangle, \quad \rg \P_a = \langle \p_a \rangle.
\]
 By $\Q_{a,j}$ we denote the projection onto the subspace generated by $\q_{a,j}$. That is, if $f \in \H$ and $\Q_a f = \sum_{k=1}^3 \alpha_k p_{a,k}$ is the unique representation of $f$ in the basis $\{ \q_{a,1},\q_{a,2},\q_{a,3} \}$, then $\Q_{a,j} f := \alpha_j \q_{a,j}$. In particular, $\Q_a = \sum_{j=1}^3 \Q_{a,j}$ and
\[
 \rg \Q_{a,j} = \langle \q_{a,j} \rangle.
\]

 The operator $\L_a$ restricted to the range of the projections yields bounded operators on the finite-dimensional Hilbert spaces $\rg \Q_a$ and $\rg \P_a$ with spectra equal to $\{0\}$ and $\{1\}$, respectively. The remaining projection
\[
 \wP_a := \I - \Q_a - \P_a = \I - \sum_{j=1}^3 \Q_{a,j} - \P_a,
\]
 defines the infinite-dimensional subspace $\rg \wP_a$ of $\H$ and the spectrum of $\L_a |_{\rg \wP_a \cap \D(\L_a)}$ is contained in the shifted half plane $\{ z \in \CC \, | \, \Re z < \eetilde \}$ (see \cite{K}[Theorem III.6.17]).

\begin{proposition}\label{prop:proj}
 Fix $\varepsilon \in ( 0, \frac{1}{2})$. The rank-one projections $\Q_{a,j}$, $j \in \{ 1,2,3 \}$, and $\P_a$ are bounded operators on $\H$, satisfy the transversality conditions
\[
 \Q_{a,j} \P_a = \P_a \Q_{a,j} = 0 \quad \text{ and } \quad \Q_{a,j} \Q_{a,k} = \delta_{jk} \Q_{a,j},
\]
as well as
\[
 \wP_a \Q_a = \Q_a \wP_a = \wP_a \P_a = \P_a \wP_a = 0,
\]
and commute with the semigroup $\S_a$ of $\L_a$,
\[
 [ \S_a (\tau), \Q_{a,j} ] = [ \S_a (\tau), \P_a ] = 0, \quad \tau \geq 0.
\]
Moreover, for $\tau \geq 0$,
\begin{align*}
 \S_a(\tau) \Q_{a,j} &= \Q_{a,j}, \quad j \in {1,2,3},\\
 \S_a(\tau) \P_a &= e^\tau \P_a, \\
 \| \S_a(\tau) \wP_a f \| &\lesssim e^{-\beep \tau} \| \wP_a f \|, \quad f \in \H.
\end{align*}
\end{proposition}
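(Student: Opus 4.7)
The plan is to extract the algebraic properties of the projections from the Dunford--Riesz functional calculus, read off the semigroup action on the finite-dimensional unstable subspaces from the known eigenfunctions, and obtain the exponential decay on the stable subspace via the Gearhart--Pr\"uss theorem.

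Since $\gamma_0$ and $\gamma_1$ are disjoint compact contours lying in $\rho(\L_a)$ by Proposition~\ref{La-spectrum} (for $a$ sufficiently small), the resolvent $\R_{\L_a}(\cdot)$ is continuous and bounded along each, so $\Q_a$ and $\P_a$ are well-defined bounded projections on $\H$; the usual resolvent identity manipulation along disjoint contours gives $\Q_a^2=\Q_a$, $\P_a^2=\P_a$, and $\Q_a\P_a=\P_a\Q_a=0$. The rank-one operators $\Q_{a,j}$ are then well-defined and bounded because $\{\q_{a,1},\q_{a,2},\q_{a,3}\}$ is a basis of the three-dimensional space $\rg\Q_a$: the vectors $\q_{0,j}(\xi)=(\xi^j,2\xi^j)$ are plainly linearly independent, and a continuity-in-$a$ argument transfers this to small $a$. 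The relations $\Q_{a,j}\Q_{a,k}=\delta_{jk}\Q_{a,j}$ are immediate from the definition of $\Q_{a,j}$, and the four identities involving $\wP_a$ follow purely algebraically from $\wP_a=\I-\Q_a-\P_a$ combined with the preceding idempotence and orthogonality.

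Pulling $\L_a$ inside the contour integrals defining $\Q_a$ and $\P_a$ shows that these commute with $\L_a$ on $\D(\L_a)$, hence with the semigroup $\S_a(\tau)$. From $\L_a\q_{a,j}=0$ one obtains $\partial_\tau\S_a(\tau)\q_{a,j}=\S_a(\tau)\L_a\q_{a,j}=0$, so $\S_a(\tau)\q_{a,j}=\q_{a,j}$; the analogous computation with $\L_a\p_a=\p_a$ gives $\S_a(\tau)\p_a=e^\tau\p_a$. Expanding any $h\in\H$ along the invariant splitting $\H=\rg\Q_a\oplus\rg\P_a\oplus\rg\wP_a$ and noting that $\Q_{a,j}$ vanishes on the last two summands yields both $\S_a(\tau)\Q_{a,j}h=\Q_{a,j}h$ and the commutation $[\S_a(\tau),\Q_{a,j}]=0$; the identity $\S_a(\tau)\P_a=e^\tau\P_a$ follows directly since $\P_a$ projects onto the line spanned by $\p_a$.

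The exponential decay $\|\S_a(\tau)\wP_af\|\lesssim e^{-(\frac12-\varepsilon)\tau}\|\wP_af\|$ on the stable subspace is the main obstacle. I would invoke the Gearhart--Pr\"uss theorem for the restricted $C_0$-semigroup $\S_a(\tau)|_{\rg\wP_a}$ on the Hilbert space $\rg\wP_a$: since its generator $\L_a|_{\rg\wP_a\cap\D(\L_a)}$ has spectrum contained in $\{\Re z<-\tfrac12+\eps\}$ by Proposition~\ref{La-spectrum} for any $\eps<\varepsilon$, it suffices to bound its resolvent uniformly on the closed half-plane $\{\Re\lambda\geq-\tfrac12+\eps\}$. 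Corollary~\ref{Laresolvent_bound} furnishes such a bound on the unbounded piece $\Omega^\eps\setminus\Omega^\eps_{\lambda_0}$, while on the compact complement $\Omega^\eps_{\lambda_0}$ the spectrum of $\L_a|_{\rg\wP_a\cap\D(\L_a)}$ is empty (the eigenvalues $0$ and $1$ have been projected off, and Proposition~\ref{La-spectrum} rules out any other spectrum there), so the restricted resolvent extends to a holomorphic, hence bounded, function on this compact set. Gearhart--Pr\"uss then identifies the resulting uniform resolvent bound with the growth bound of the restricted semigroup, delivering the claimed decay.
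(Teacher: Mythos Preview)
Your proposal is correct and follows essentially the same approach as the paper: algebraic properties from the Dunford--Riesz calculus, semigroup action on the eigenspaces read off from the eigenvalue equations, and the stable decay via Gearhart--Pr\"uss with the resolvent bound split into the unbounded region (Corollary~\ref{Laresolvent_bound}) and the compact region where the restricted resolvent is holomorphic. The only point the paper makes more explicit is that the resolvent bound on the compact region $\Omega^\eps_{\lambda_0}$ is \emph{uniform in small $a$} (via \cite{K}[Theorem IV.3.15] and Lemma~\ref{Laconv1}); this uniformity is what makes the implicit constant in the final decay estimate independent of $a$, which is used later in Lemma~\ref{lem:decay1}.
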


\begin{proof}
 Let $\eps \in (0,\varepsilon)$, and choose $\lambda_0>1$ and $a_0>0$ accordingly so that the results in Section~\ref{ssec3.2} hold for all $a \in \RR^3$ with $|a|<a_0$. Holomorphic functional calculus yields \[ \Q_a \P_a = \P_a \Q_a = 0.\] Hence the mutual transversality conditions follow directly from the definition of the projections $\Q_{a,j}$, and the transversality with $\wP_a$ from its definition. The semigroup $\S_a$ commutes with the resolvents of $\L_a$ (this follows from the integral representation of the resolvent, see \cite{EN}[Theorem II.1.10]), and therefore with the spectral projections, i.e.,
\[
 [ \S_a(\tau), \Q_a ] = [ \S_a(\tau), \P_a ] = 0.
\]
 By definition of $\Q_{a,j}$ and the fact that $\S_a(\tau) = e^{0 \cdot \tau} = \I$ on the range of $\Q_a$ (since $\L_a \Q_a = 0$) also
\[
 \Q_{a,j} \S_a(\tau) f = \Q_{a,j} \Q_a \S_a(\tau) f = \Q_{a,j} \S_a(\tau) \Q_a f = \Q_{a,j} \Q_a f = \Q_{a,j} f = \S_a(\tau) \Q_{a,j} f,
\]
 which is $[\S_a(\tau),\Q_{a,j}] =0$.

 It remains to establish a growth bound on the infinite-dimensional subspace $\rg \wP_a$. Since $\R_{\L_a}(z)$ is holomorphic away from the eigenvalues $\{0,1\}$, it is clear that $\R_{\L_a}(\lambda)\wP_a$ is bounded on the compact set $\Omega^\eps_{\lambda_0}$ for each $a$ \cite{K}[Theorem III.6.17]. Uniformity with respect to small $a$ follows once again from perturbation theory \cite{K}[Theorem IV.3.15].

 On the other hand, by Corollary~\ref{Laresolvent_bound} there exists $\lambda_0>0$ such that the resolvent $\R_{\L_a}(\lambda)$ of $\L_a$ is uniformly bounded by some $c>0$ for $\lambda \in \Omega^\eps \setminus \Omega^\eps_{\lambda_0} =  \{ z \in \CC \, | \, \Re z \geq \eetilde \text{ and } |z| > \lambda_0 \}$ and all $a$ sufficiently small.

 Together we therefore obtain that $\R_{\L_a}(\lambda)$ is bounded uniformly on the half space $\Omega^\eps = \{ z \in \CC \, | \, \Re z \geq \eetilde \}$, for all $a$ sufficiently small, i.e.,
\[
 \sup_{\lambda \in \Omega^\varepsilon} \| \R_{\L_a}(\lambda) \wP_a \| < \infty,
\]
 The Gearhart--Greiner--Pr\"uss  Theorem (see, for example, \cite{EN}[Theorem V.3.8]) thus implies that the strongly continuous semigroup $(\S_a(\tau)\wP_a)_{\tau \geq 0}$ is uniformly exponentially stable with growth bound $\eetilde$. In particular, for the given $\varepsilon>\eps$ there exists a constant $C_\varepsilon>0$ such that
\[
 \| \S_a(\tau) \wP_a f \| \leq C_\varepsilon e^{-\beep \tau} \| \wP_a f \|
\]
 for all $f \in \H$.
\end{proof}


\section{The nonlinear equation}
\label{sec4}

 In Section~\ref{sec3} we have established growth bounds for the linearized equation using semigroup theory. From now on we include the remaining nonlinear terms and hence consider the full nonlinear problem \eqref{eq-psi}, i.e.,
\begin{equation*}
 \begin{aligned}
 \partial_0 \psi_1 &= - \xi^j \partial_j \psi_1 - \psi_1 + \psi_2, \\
 \partial_0 \psi_2 &= \partial_j \partial^j \psi_1 - \xi^j \partial_j \psi_2 -2 \psi_2 + \psi_1^3,
 \end{aligned}
\end{equation*}
 with data prescribed at $\tau = 0$. Using a modulation ansatz, this formal Cauchy problem,
\begin{align*}
 \partial_\tau \Psi(\tau) &= \L \Psi(\tau) + \N (\Psi(\tau)), \\
 \Psi(0) &= \Psi_0 + u, 
\end{align*}
 has led to equation \eqref{modeq} for the perturbation $\Phi(\tau) = \Psi(\tau) - \Psi_{a(\tau)}$, i.e.,
\begin{align}\label{modeq2}
 \partial_\tau \Phi(\tau) - \L \Phi(\tau) - \L'_{a_\infty} \Phi(\tau)  = [ \L'_{a(\tau)} - \L'_{a_\infty} ] \Phi(\tau) + \N_{a(\tau)}(\Phi(\tau)) - \partial_\tau \Psi_{a(\tau)},
\end{align}
 where $\Psi_{a(\tau)}$ are the Lorentz boosts of the selfsimilar solution $\psi_0 \equiv \sqrt{2}$. Up to here we have only treated the linear part $\partial_\tau \Phi(\tau) - \L \Phi(\tau) - \L_{a_\infty}' \Phi(\tau)$ on the left hand side of this modulation equation \eqref{modeq2}. The inhomogeneous right hand side can be included by rewriting \eqref{modeq2} as an integral equation using Duhamel's principle. That is, for given initial data $\Phi(0) = u \in \H$ and the linear operator $\hL_a := \L_a'-\L_{a_\infty}'$, a weak solution of \eqref{modeq2} must satisfy
\begin{equation}\label{intmodeq}
\begin{aligned}
 \Phi(\tau) = \; & \S_{a_\infty}(\tau) u
 + \int_0^\tau \S_{a_\infty}(\tau-\sigma) \left[ \hL_{a(\sigma)} \Phi(\sigma) + \N_{a(\sigma)}(\Phi(\sigma)) - \partial_\sigma \Psi_{a(\sigma)} \right] d\sigma \\ =: \; & \K_u(\Phi,a)(\tau).
\end{aligned}
\end{equation}
 The difficulty when solving equation~\eqref{intmodeq} arises from the unstable subspaces $\rg \Q_{a_\infty}$ and $\rg \P_{a_\infty}$ of the linearized evolution described by the semigroup $\S_{a_\infty}(\tau)$ on $\H$. These instabilities are induced by Lorentz symmetry and time translation, respectively, and are encoded in the spectral projections $\Q_{a_\infty}$ and $\P_{a_\infty}$. While we are able to suppress the instability coming from the Lorentz symmetry by choosing the rapidity $a$ in a suitable way using the projections $\Q_{a_\infty,j}$, $j \in \{1,2,3\}$ (see Section~\ref{ssec4.2}), we treat the time-translation instability by adding a correction term $\C_u(\Phi,a)$ that filters out the influence of $\P_{a_\infty}$ (see Section~\ref{ssec4.3}). This approach leads us to a modified equation of the form
\begin{align}\label{modified}
 \Phi(\tau) = \wK_u(\Phi,a)(\tau),
\end{align}
 where $\wK_u(\Phi,a)(\tau) = \K_u(\Phi,a)(\tau) - \S_{a_\infty}(\tau)\C_u(\Phi,a)(\tau)$. The idea is to use a fixed point argument that relies on Lipschitz continuity of the linear theory, more precisely Lipschitz bounds for the semigroup $\S_a(\tau)$ and the projections $\Q_a,\P_a,\wP_a$, as well as the Lipschitz continuity of the nonlinearity (see Section~\ref{ssec4.1}). The correction term related to the modified equation~\eqref{modified} is finally used \emph{a posteriori\/} to define the correct set of initial data. More precisely, by projecting away from the time-translation instability we obtain in Section~\ref{ssec4.4} a codimension-1 manifold of initial data evolving into the global solutions of the main Theorem~\ref{maintheorem}. Finally, in Sections~\ref{ssec4.5} and \ref{ssec4.6} the asymptotic behavior of global solutions with codimension-1 and small initial data is derived, respectively. This proves the main results of this article, Theorem~\ref{smalldatatheorem} and Theorem~\ref{maintheorem}.

\subsection{Lipschitz estimates for the linear and nonlinear operators}
\label{ssec4.1}

It is the aim of this section to prove that the inhomogeneous terms in the integrand are small. The estimates below will later be used vastly in the contraction arguments of Section~\ref{ssec4.2} and \ref{ssec4.3}.

\begin{lemma}[Lipschitz estimates for the linear evolution]\label{lem:decay1}
 Fix $\varepsilon \in ( 0, \frac{1}{2})$ . The eigenvectors, spectral projections and semigroup satisfy Lipschitz bounds with respect to $a,b \in \RR^3$ small, i.e.,
\begin{align*}
 \| \q_{a,j} - \q_{b,j} \| + \| \p_a - \p_b \| &\lesssim |a-b|, \quad j \in \{1,2,3\}, \\
 \| \Q_a - \Q_b \| + \| \P_a - \P_b \| &\lesssim |a-b|, \\
 \| \S_a(\tau) \wP_a - \S_b(\tau) \wP_b \| &\lesssim |a-b| e^{-\beep \tau} , \quad \tau \geq 0.
\end{align*}
\end{lemma}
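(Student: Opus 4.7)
The plan is to handle the three estimates in sequence, each leaning on the tools of the previous step. For the first estimate, I would use the explicit formulas in Lemma~\ref{LaEV}. Since the coefficients $A_\mu(a)$ depend real-analytically on $a$, and the denominator $A_0(a)-A_j(a)\xi^j$ is bounded below by a positive constant uniformly for $|\xi|\leq 1$ and $a$ sufficiently small, the maps $a\mapsto\q_{a,j}$ and $a\mapsto\p_a$ are $C^\infty$ into $C^2(\overline{\BB})$, which embeds continuously into $\H$. The mean value theorem then delivers the Lipschitz bound.

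For the second estimate, I would invoke the Cauchy representation $\Q_a=\frac{1}{2\pi\ii}\int_{\gamma_0}\R_{\L_a}(z)\,dz$ (and the analogous one for $\P_a$) together with the second resolvent identity
\begin{equation*}
\R_{\L_a}(z)-\R_{\L_b}(z)=\R_{\L_a}(z)(\L'_a-\L'_b)\R_{\L_b}(z).
\end{equation*}
Standard perturbation theory for isolated, finite systems of eigenvalues (cf.\ the use of \cite{K} in Section~\ref{ssec3.2}) ensures that $\R_{\L_a}(z)$ is uniformly bounded on the compact contours $\gamma_0,\gamma_1$ for $a$ small. Combined with $\|\L'_a-\L'_b\|\lesssim|a-b|$ from Lemma~\ref{Laconv1}, this gives Lipschitz dependence of $\Q_a$ and $\P_a$, and hence of $\wP_a=\I-\Q_a-\P_a$.

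The third estimate is the main obstacle: one must retain the exponential decay while comparing semigroups that live on \emph{different} stable subspaces. My strategy is a Duhamel identity between the two flows. Differentiating
\begin{equation*}
\sigma\mapsto\S_a(\tau-\sigma)\wP_a\S_b(\sigma)\wP_b
\end{equation*}
on $[0,\tau]$ and using $[\L_a,\wP_a]=0$ from Proposition~\ref{prop:proj} yields
\begin{equation*}
\wP_a\S_b(\tau)\wP_b-\S_a(\tau)\wP_a\wP_b=\int_0^\tau\S_a(\tau-\sigma)\wP_a(\L'_b-\L'_a)\S_b(\sigma)\wP_b\,d\sigma.
\end{equation*}
Choosing $\varepsilon'\in(0,\varepsilon)$ and applying $\|\S_c(\tau)\wP_c\|\lesssim e^{-(\frac12-\varepsilon')\tau}$ for $c\in\{a,b\}$ (Proposition~\ref{prop:proj}), the integral is bounded by a constant times $|a-b|\,\tau\,e^{-(\frac12-\varepsilon')\tau}$, and the linear factor in $\tau$ is absorbed by the slack $\varepsilon-\varepsilon'>0$.

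It remains to pass from this identity to the desired quantity via two error terms. Using the transversality relations $\wP_c\Q_c=\wP_c\P_c=\Q_c\wP_c=\P_c\wP_c=0$ from Proposition~\ref{prop:proj}, one computes
\begin{align*}
\S_a(\tau)\wP_a-\S_a(\tau)\wP_a\wP_b&=\S_a(\tau)\wP_a(\Q_b-\Q_a)+\S_a(\tau)\wP_a(\P_b-\P_a),\\
\wP_a\S_b(\tau)\wP_b-\S_b(\tau)\wP_b&=-(\Q_a-\Q_b)\S_b(\tau)\wP_b-(\P_a-\P_b)\S_b(\tau)\wP_b.
\end{align*}
Each summand factors as a $|a-b|$-small projection difference (by the second estimate) times an exponentially decaying stable-subspace semigroup, and therefore satisfies the same bound. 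Adding all three contributions yields $\|\S_a(\tau)\wP_a-\S_b(\tau)\wP_b\|\lesssim|a-b|\,e^{-\beep\tau}$.
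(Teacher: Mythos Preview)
Your proof is correct and matches the paper's argument closely. The first two estimates are handled identically (smooth dependence of the explicit eigenvectors; contour integral plus second resolvent identity). For the third estimate both you and the paper use a Duhamel argument together with the $\varepsilon'<\varepsilon$ slack to absorb the linear factor in $\tau$; the only difference is in how the Duhamel identity is set up. The paper writes the normalized difference $\Phi_{a,b}(\tau)=(\S_a(\tau)\wP_a u-\S_b(\tau)\wP_b u)/|a-b|$ directly as the solution of an inhomogeneous evolution under $\L_a\wP_a$ with forcing $(\L_a\wP_a-\L_b\wP_b)\S_b(\tau)\wP_b u/|a-b|$, and then uses the algebraic identity $\L_a\wP_a-\L_b\wP_b=\L_a'-\L_b'-\P_a+\P_b$ (from $\L_a\Q_a=0$, $\L_a\P_a=\P_a$) to see that this forcing is bounded. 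You instead interpolate via $\sigma\mapsto\S_a(\tau-\sigma)\wP_a\S_b(\sigma)\wP_b$, which produces only $\L_b'-\L_a'$ in the integrand, and handle the endpoint mismatches $\wP_a\wP_b\neq\wP_a,\wP_b$ separately through the projection differences. Both routes are equally valid and of the same difficulty; yours is slightly more modular, the paper's slightly more compact.
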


\begin{proof}
 Since the eigenvectors are made of hyperbolic functions (see Lemma~\ref{LaEV}), they are (locally) Lipschitz continuous for $a,b$ sufficiently small as long as the denominator is nonzero.

 The second estimate for the projections follows from the second resolvent identity, and the fact that the resolvents are uniformly (for small $a$) bounded on compact sets (this uses \cite{K}[Theorem IV.3.15] and Lemma~\ref{Laconv1}), i.e., for $\lambda \in \rg \gamma_0 \cup \rg \gamma_1 \subseteq \rho(\L_a) \cap \rho(\L_b)$,
\begin{align*}
 \| \R_{\L_a}(\lambda) - \R_{\L_b}(\lambda) \| &\leq \| \R_{\L_a}(\lambda) \| \| \R_{\L_b}(\lambda) \| \| \L_a - \L_b \| \\
& \lesssim \| \R_{\L_a}(\lambda) \| \| \R_{\L_b} (\lambda) \| | a-b| \lesssim |a-b|.
\end{align*}

 It remains to be proven that the third estimate for the projection $\wP_a$ holds, for which we fix $\eps \in (0,\varepsilon)$. Suppose that $u \in \cC^\infty(\overline \BB)^2$. By the proof of \cite{DS}[Lemma 4.9] the function
\[
 \Phi_{a,b}(\tau) := \frac{\S_a(\tau) \wP_a u - \S_b(\tau) \wP_b u}{|a-b|}
\]
satisfies the inhomogeneous equation
\begin{align}\label{Phiab}
 \partial_\tau \Phi_{a,b}(\tau) = \L_a \wP_a \Phi_{a,b}(\tau) + \frac{\L_a\wP_a - \L_b\wP_b}{|a-b|} \S_b(\tau) \wP_b u
\end{align}
 with initial value $\Phi_{a,b}(0) = \frac{\wP_a - \wP_b}{|a-b|} u$. Proposition~\ref{prop:proj} showed that $\L_a \Q_a = 0$ and $\L_a\P_a = \P_a$, thus by Lemma~\ref{Laconv1} and the second step of this proof, for $a,b$ small,
\[
 \| \L_a \wP_a - \L_b \wP_b \| = \| \L_a - \P_a -\L_b + \P_b \| = \| \L_a' - \L_b' - \P_a + \P_b \| \lesssim |a-b|.
\]
 Hence $\frac{\L_a\wP_a - \L_b\wP_b}{|a-b|}$ is in fact a bounded operator. Similarly, $\wP_a - \wP_b$ is bounded since
\[
 \| \wP_a - \wP_b \| \leq \| \Q_a - \Q_b \| + \| \P_a - \P_b \| \lesssim | a - b |.
\]
 The Duhamel formula applied to \eqref{Phiab} together with the growth estimate for $\S_a(\tau)\wP_a$ from Proposition~\ref{prop:proj} (applied to $\eps$ instead of $\varepsilon$) thus yields
\begin{align*}
 \| \Phi_{a,b}(\tau) \| &\leq \| \S_a(\tau) \wP_a \Phi_{a,b}(0) \| + \int_0^\tau \left\| \S_a(\tau-\sigma) \wP_a \frac{\L_a\wP_a - \L_b\wP_b}{|a-b|} \S_b(\sigma) \wP_b u \right\| d\sigma \\
&\lesssim e^{-\beeptilde \tau} \left\| \frac{\wP_a - \wP_b}{|a-b|} u \right\| + \int_0^\tau e^{-\beeptilde (\tau - \sigma)} \left\| \frac{\L_a\wP_a - \L_b\wP_b}{|a-b|} \right\| e^{-\beeptilde \sigma} \| u \| d\sigma \\
& \lesssim (1 + \tau) e^{-\beeptilde \tau} \| u \| \lesssim e^{-\beep \tau} \| u \|.
\end{align*}
For $u \in \H$ the argument follows from the density of $\cC^\infty(\overline \BB)^2$ in $\H$.
\end{proof}

 The right spaces in which to consider the evolution of the solution and rapidity are those which preserve the linear decay obtained in Lemma~\ref{lem:decay1}.

\begin{definition}\label{def:XA}
 Fix $\varepsilon \in ( 0, \frac{1}{2})$. Let $\X := \{ \Phi \in \cC ([0,\infty),\H) \, | \, \| \Phi \|_\X < \infty \}$ with norm
\[
 \| \Phi \|_\X := \sup_{\tau > 0} \left( e^{\beep \tau} \| \Phi(\tau) \| \right)
\]
 and $\A := \{ a \in \cC^1([0,\infty),\RR^3) \, | \, a(0) = 0, \| a \|_\A < \infty \}$ with norm
\[
 \| a \|_\A := \sup_{\tau>0} \left( e^{\beep \tau} |\dot a(\tau)| + |a(\tau)| \right).
\]
 For $\delta > 0$ we define the closed subsets $\Xd := \{ \Phi \in \X \, | \, \| \Phi \|_\X \leq \delta \}$ and $\Ad := \{ a \in \A \, | \, |\dot a(\tau) | \leq \delta e^{- \beep \tau} \}$ of $\X$ and $\A$, respectively.
\end{definition}

\begin{remark}[Properties of $\Ad$]\label{rem:a}
 Note that $a \in \Ad$ automatically implies that $\| a \|_\A \lesssim \delta$, since
\begin{align}\label{aest}
 |a(\tau)| \leq \int_0^\tau |\dot a (\sigma)| d\sigma \leq \frac{\delta}{\frac{1}{2}-\varepsilon} \left[ 1 - e^{-\beep\tau} \right] \leq \frac{\delta}{\frac{1}{2}-\varepsilon} 
\end{align}
 by the fundamental theorem of calculus. Moreover, the limit
\[
 a_\infty := \lim_{\tau \to \infty} a(\tau) = \int_0^\infty \dot a (\sigma) d\sigma
\]
 exists, because the partial integrals $\int_0^\tau |\dot a(\sigma)| d\sigma$ are bounded for all $\tau \geq 0$ by \eqref{aest}. In particular, the convergence rate is
\begin{align}\label{eq:ainfty}
 |a_\infty - a(\tau)| \leq \int_\tau^\infty |\dot a(\sigma)| d\sigma \leq \frac{\delta}{\frac{1}{2}-\varepsilon} e^{-\beep\tau} \lesssim \delta e^{-\beep\tau}
\end{align}
 and $|a_\infty| \leq \frac{\delta}{\eep}$.
\end{remark}

 Since $\H$ and $\RR^3$ are complete, both $\X$ and $\A$ are Banach spaces. The following estimates are needed to show that the integral terms on the right hand side of \eqref{intmodeq} are small.

\begin{lemma}\label{lem:decay3}
 Fix $\varepsilon \in ( 0, \frac{1}{2})$. For $\delta>0$ sufficiently small, $\Phi \in \Xd$, $a \in \Ad$ and all $\tau \geq 0$, the following exponential bounds are satisfied:
\begin{align*}
 \| \hL_{a(\tau)} \Phi(\tau) \| + \| \N_{a(\tau)} (\Phi(\tau)) \| &\lesssim \delta^2 e^{-\dbeep \tau}, \\
 \| \P_{a_\infty} \partial_\tau \Psi_{a(\tau)} \| + \| ( \I - \Q_{a_\infty}) \partial_\tau \Psi_{a(\tau)} \| &\lesssim \delta^2 e^{-\dbeep \tau}.
\end{align*}
\end{lemma}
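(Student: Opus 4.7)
The four estimates all have the same flavor: each quantity on the left factors as a product of two pieces, one of which is small due to the smallness of $\Phi\in\Xd$ and the other due to the smallness of $a\in\Ad$ (together with Remark~\ref{rem:a}). The target rate $e^{-(1-2\varepsilon)\tau}$ is precisely twice the basic rate $(1/2-\varepsilon)$, so the plan is to produce each term as such a product.

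For the first bound, I would use $\hL_{a(\tau)} = \L'_{a(\tau)} - \L'_{a_\infty}$ together with the Lipschitz estimate $\|\L'_a-\L'_b\|\lesssim|a-b|$ from Lemma~\ref{Laconv1}, which gives $\|\hL_{a(\tau)}\|\lesssim|a(\tau)-a_\infty|\lesssim\delta e^{-(1/2-\varepsilon)\tau}$ by \eqref{eq:ainfty}. Combined with $\|\Phi(\tau)\|\leq\delta e^{-(1/2-\varepsilon)\tau}$ this yields the $\delta^2 e^{-(1-2\varepsilon)\tau}$ bound on $\|\hL_{a(\tau)}\Phi(\tau)\|$. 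For $\N_{a(\tau)}(\Phi(\tau))$, only the second component $3\psi_{a(\tau),1}\phi_1(\tau)^2+\phi_1(\tau)^3$ matters. Since $\psi_{a,1}\in\cC^\infty(\overline{\BB})$ for $a$ small, $\|\psi_{a,1}\|_{L^\infty(\BB)}\lesssim 1$, and by Sobolev embedding $H^1(\BB)\hookrightarrow L^6(\BB)\hookrightarrow L^4(\BB)$, we have $\|\phi_1\|_{L^4}+\|\phi_1\|_{L^6}\lesssim\|\Phi\|\lesssim\delta e^{-(1/2-\varepsilon)\tau}$. Hence $\|\psi_{a,1}\phi_1^2\|_{L^2}\lesssim\delta^2 e^{-(1-2\varepsilon)\tau}$ and $\|\phi_1^3\|_{L^2}\leq\|\phi_1\|_{L^6}^3\lesssim\delta^3 e^{-3(1/2-\varepsilon)\tau}$; the cubic term is absorbed into the quadratic one for $\delta$ small.

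For the second line, the key observation is that $\q_{a,j}=\partial_{a^j}\Psi_a$ (Lemma~\ref{LaEV}), so by the chain rule
\[
 \partial_\tau \Psi_{a(\tau)} = \dot a^j(\tau)\,\q_{a(\tau),j}.
\]
Now the transversality relations $\P_{a_\infty}\q_{a_\infty,j}=0$ and $(\I-\Q_{a_\infty})\q_{a_\infty,j}=0$ (Proposition~\ref{prop:proj}) allow me to replace $\q_{a(\tau),j}$ by the difference $\q_{a(\tau),j}-\q_{a_\infty,j}$ inside both projections. Applying the Lipschitz bound $\|\q_{a,j}-\q_{b,j}\|\lesssim|a-b|$ from Lemma~\ref{lem:decay1} and \eqref{eq:ainfty} then gives
\[
 \|\P_{a_\infty}\q_{a(\tau),j}\|+\|(\I-\Q_{a_\infty})\q_{a(\tau),j}\|\lesssim|a(\tau)-a_\infty|\lesssim\delta e^{-(1/2-\varepsilon)\tau},
\]
and multiplying by $|\dot a(\tau)|\leq\delta e^{-(1/2-\varepsilon)\tau}$ produces the desired $\delta^2 e^{-(1-2\varepsilon)\tau}$ bound.

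I do not anticipate any genuine obstacle here; the lemma is a direct consequence of (i) the Sobolev embedding used to close the nonlinear term in $L^2$, and (ii) the interplay between the transversality of the spectral projections at $a_\infty$ and the modulation identity $\partial_\tau\Psi_{a(\tau)}=\dot a^j(\tau)\q_{a(\tau),j}$, which is the whole point of choosing $a(\tau)$ to absorb the unstable directions $\rg\Q_{a_\infty,j}$.
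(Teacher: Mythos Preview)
Your proposal is correct and follows essentially the same route as the paper's proof: the Lipschitz bound from Lemma~\ref{Laconv1} together with \eqref{eq:ainfty} for the $\hL_{a(\tau)}$ term, Sobolev embedding for the nonlinearity, and the chain rule identity $\partial_\tau\Psi_{a(\tau)}=\dot a^j(\tau)\q_{a(\tau),j}$ combined with the transversality relations at $a_\infty$ and the eigenvector Lipschitz bound from Lemma~\ref{lem:decay1}. The paper merely introduces the shorthand $\hat\q_{a(\tau),j}:=\q_{a(\tau),j}-\q_{a_\infty,j}$ for the difference you use implicitly.
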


\begin{proof}
 By Remark~\ref{rem:a}, $a_\infty$ exists and $|a_\infty| \leq \frac{\delta}{\eep}$ can be made arbitrarily small (choose $\delta$ small). By definition, $\hL_a = \L_a'-\L_{a_\infty}'$. Thus it follows from the Lipschitz regularity of Lemma~\ref{Laconv1} and from the definition of $\Xd$ that, for $\delta$ sufficiently small,
\[
 \| \hL_{a(\tau)} \Phi(\tau) \| \leq \| \L_{a(\tau)}' - \L_{a_\infty}' \| \| \Phi(\tau) \| \lesssim |a(\tau) - a_\infty | \| \Phi(\tau) \| \lesssim \delta^2 e^{-\dbeep\tau}.
\]
 The nonlinear term
\[
 \N_{a(\tau)}(\Phi(\tau)) = \N(\Psi_{a(\tau)} + \Phi(\tau)) - \N(\Psi_{a(\tau)}) - \L_{a(\tau)}' \Phi(\tau) =
\begin{pmatrix}
 0 \\ 3 \psi_{1,a(\tau)} \phi_1(\tau)^2 + \phi_1(\tau)^3
\end{pmatrix}
\]
 is a map $\H \to \H$ by the Sobolev embedding $H^1 (\BB) \hookrightarrow L^6(\BB) \hookrightarrow L^2(\BB)$, and we obtain that
\begin{align*}
 \| \N_{a(\tau)} (\Phi(\tau)) \| &\lesssim \|\Psi_{a(\tau)} \|_{L^\infty(\BB)} \| \Phi(\tau) \|^2 + \| \Phi(\tau) \|^3
  \lesssim \| \Phi(\tau) \|^2 + \| \Phi(\tau) \|^3 \lesssim \delta^2 e^{-\dbeep \tau}.
\end{align*}
 This proves the first estimate. It remains to control $\partial_\tau \Psi_{a(\tau)}$.

 Let $\hat \q_{a(\tau),j} := \q_{a(\tau),j} - \q_{a_\infty,j}$. By the chain rule and Lemma~\ref{LaEV},
\begin{align}\label{eq:partialPsi}
 \partial_\tau \Psi_{a(\tau)} = \dot a^j(\tau) \partial_{a^j} \Psi_{a(\tau)} = \dot a^j(\tau) \q_{a(\tau),j} = \dot a^j(\tau) \q_{a_\infty,j} + \dot a^j(\tau) \hat \q_{a(\tau),j}.
\end{align}
 Since $\q_{a_\infty,j} \in \rg \Q_{a_\infty}$, which is transversal to $\P_{a_\infty}$ by Proposition~\ref{prop:proj}, $\P_{a_\infty} (\dot a^j(\tau) \q_{a_\infty,j}) = 0$ and
\[
 \| \P_{a_\infty} \partial_\tau \Psi_{a(\tau)} \| = |\dot a^j(\tau)| \| \hat \q_{a(\tau),j} \| \lesssim \delta e^{-\beep \tau} |a(\tau) - a_\infty| \lesssim \delta^2 e^{-\dbeep \tau},
\]
 where we applied the first estimate of Lemma~\ref{lem:decay1} and \eqref{eq:ainfty}. Similarly, since also $(\I - \Q_{a_\infty}) \q_{a_\infty,j} = 0$ holds,
\[
 \| ( \I - \Q_{a_\infty} ) \partial_\tau \Psi_{a(\tau)} \| \lesssim \delta^2 e^{-\dbeep \tau}. \qedhere
\]
\end{proof}

 The operators $\hL_{a(\tau)}$, $\N_{a(\tau)}$ and the spectral projections are locally Lipschitz continuous in their argument as well as in $a(\tau)$. The following result is the key property which allows us to control the integral term in Duhamel's formula \eqref{intmodeq} and set up a contraction argument.

\begin{lemma}[Lipschitz estimates for the inhomogeneous terms]\label{lem:decay4}
 Fix $\varepsilon \in ( 0, \frac{1}{2})$. For $\delta>0$ sufficiently small, $\Phi,\Upsilon \in \Xd$, $a,b \in \Ad$, and $\tau \geq 0$, the following Lipschitz bounds are satisfied:
\begin{align*}
 \| \hL_{a(\tau)} \Phi(\tau) - \hL_{b(\tau)} \Upsilon(\tau) \| &\lesssim \delta e^{-\dbeep \tau} \left( \|\Phi - \Upsilon \|_\X + \| a - b \|_\A \right), \\
 \| \N_{a(\tau)}(\Phi(\tau)) - \N_{b(\tau)} (\Upsilon(\tau)) \| &\lesssim \delta e^{-\dbeep \tau} \left( \|\Phi - \Upsilon \|_\X + \| a - b \|_\A \right), \\
 \| \P_{a_\infty} \partial_\tau \Psi_{a(\tau)} - \P_{b_\infty} \partial_\tau \Psi_{b(\tau)} \| &\lesssim \delta e^{-\dbeep \tau} \| a - b \|_\A, \\
 \| (\I-\Q_{a_\infty}) \partial_\tau \Psi_{a(\tau)} - (\I - \Q_{b_\infty} ) \partial_\tau \Psi_{b(\tau)} \| &\lesssim \delta e^{-\dbeep \tau} \| a - b \|_\A.
\end{align*}
\end{lemma}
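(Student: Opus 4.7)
All four estimates are Lipschitz refinements of the corresponding bounds in Lemma~\ref{lem:decay3}. The general strategy is: split each difference by adding and subtracting a carefully chosen intermediate expression so that every resulting piece carries two small factors -- one of Lipschitz type in $(a,b)$ or $(\Phi,\Upsilon)$, and one of exponential-decay type. The available tools are Lemma~\ref{Laconv1} (Lipschitz dependence of $\L_a'$ on $a$), Lemma~\ref{lem:decay1} (Lipschitz estimates for $\q_{a,j}$, $\p_a$, $\Q_a$, $\P_a$, $\S_a \wP_a$), the definitions of $\Xd,\Ad$, the consequences of $a\in\Ad$ collected in Remark~\ref{rem:a}, and the transversality relations of Proposition~\ref{prop:proj}.

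For the $\hL$-estimate I would split
\[
 \hL_{a(\tau)}\Phi(\tau) - \hL_{b(\tau)}\Upsilon(\tau) = \hL_{a(\tau)}(\Phi-\Upsilon)(\tau) + \bigl(\hL_{a(\tau)} - \hL_{b(\tau)}\bigr)\Upsilon(\tau).
\]
The first summand is handled exactly as in the proof of Lemma~\ref{lem:decay3}: $\|\hL_{a(\tau)}\| \lesssim |a(\tau) - a_\infty| \lesssim \delta e^{-\beep\tau}$ by Lemma~\ref{Laconv1} and \eqref{eq:ainfty}, and this combines with $\|(\Phi-\Upsilon)(\tau)\| \leq e^{-\beep\tau}\|\Phi-\Upsilon\|_\X$ to give $\delta e^{-\dbeep\tau}\|\Phi-\Upsilon\|_\X$. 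For the second summand, since $a\mapsto\L_a'$ is multiplication by the smooth $3\psi_{a,1}^2$ and hence $C^2$ near the origin of $\RR^3$, a two-parameter Taylor telescoping yields the mixed-difference bound
\[
 \|(\L_{a(\tau)}' - \L_{b(\tau)}') - (\L_{a_\infty}' - \L_{b_\infty}')\| \lesssim \bigl|(a(\tau)-b(\tau)) - (a_\infty-b_\infty)\bigr| + \bigl(|a(\tau)-b(\tau)| + |a_\infty-b_\infty|\bigr)|b(\tau)-b_\infty|.
\]
The first right-hand term equals $\bigl|\int_\tau^\infty (\dot a - \dot b)\,d\sigma\bigr| \lesssim \|a-b\|_\A e^{-\beep\tau}$ by the $\Ad$-bound on $\dot a - \dot b$; together with $|b(\tau)-b_\infty|\lesssim\delta e^{-\beep\tau}$ and $\|\Upsilon(\tau)\|\lesssim\delta e^{-\beep\tau}$ this produces the required $\delta\|a-b\|_\A e^{-\dbeep\tau}$. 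The $\N$-estimate is analogous and simpler: split $\N_{a(\tau)}(\Phi) - \N_{b(\tau)}(\Upsilon) = [\N_{a(\tau)}(\Phi) - \N_{a(\tau)}(\Upsilon)] + [\N_{a(\tau)}(\Upsilon) - \N_{b(\tau)}(\Upsilon)]$, then use the polynomial form $3\psi_{a,1}\phi_1^2 + \phi_1^3$, the Sobolev embedding $H^1(\BB)\hookrightarrow L^6(\BB)$ to handle the cubic/quadratic products, and the bound $\|\psi_{a,1}-\psi_{b,1}\|_{L^\infty(\BB)}\lesssim |a-b|$ (first line of Lemma~\ref{lem:decay1}). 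Every resulting piece carries at least one small factor $\|\Phi\|$ or $\|\Upsilon\|\lesssim\delta e^{-\beep\tau}$ together with a Lipschitz factor, yielding the claimed bound.

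For the two projected time-derivative estimates, I would first invoke the chain-rule identity \eqref{eq:partialPsi}, $\partial_\tau\Psi_{a(\tau)} = \dot a^j(\tau)\q_{a(\tau),j}$, and the transversality relations $\P_{a_\infty}\q_{a_\infty,j}=0$ and $(\I-\Q_{a_\infty})\q_{a_\infty,j}=0$ from Proposition~\ref{prop:proj}, to rewrite
\[
 \P_{a_\infty}\partial_\tau\Psi_{a(\tau)} = \dot a^j(\tau)\,\P_{a_\infty}\bigl(\q_{a(\tau),j} - \q_{a_\infty,j}\bigr),
\]
and analogously with $\I-\Q_{a_\infty}$. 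The corresponding $a$-vs.-$b$ difference then telescopes into three natural pieces isolating (i) $\dot a^j - \dot b^j$, (ii) $\P_{a_\infty}-\P_{b_\infty}$ (resp.\ $\Q_{a_\infty}-\Q_{b_\infty}$), and (iii) $(\q_{a(\tau),j}-\q_{a_\infty,j}) - (\q_{b(\tau),j}-\q_{b_\infty,j})$. Pieces (i) and (ii) are immediately controlled by the $\Ad$-bound on $\dot a-\dot b$ combined with Lemma~\ref{lem:decay1}; for piece (iii) the same two-parameter mixed-difference telescoping as in the $\hL$-step applies, using that $|a(\tau)-b(\tau)|,|a_\infty-b_\infty|\lesssim\|a-b\|_\A$ while $|a(\tau)-a_\infty|,|b(\tau)-b_\infty|\lesssim\delta e^{-\beep\tau}$.

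\textbf{Main obstacle.} The delicate step is the second summand in the $\hL$-bound (and, analogously, piece (iii) for the $\Psi$-bounds). A naive application of Lemma~\ref{Laconv1} gives $\|\L_{a(\tau)}'-\L_{b(\tau)}'\|\lesssim\|a-b\|_\A$ but leaves only a single $e^{-\beep\tau}$ from $\|\Upsilon\|$, while exploiting only $|a(\tau)-a_\infty|\lesssim\delta e^{-\beep\tau}$ gives $\delta^2$ decay but no Lipschitz dependence on $a-b$. Reconciling both is the purpose of the two-parameter mixed-difference estimate; the pivotal observation is that $(a(\tau)-b(\tau)) - (a_\infty-b_\infty)$ enjoys simultaneously $\|a-b\|_\A$-smallness and $e^{-\beep\tau}$-decay via its tail-integral representation $-\int_\tau^\infty(\dot a-\dot b)\,d\sigma$.
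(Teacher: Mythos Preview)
Your proposal is correct and follows essentially the same route as the paper: the same add-and-subtract splittings, the same use of Sobolev embeddings for the nonlinearity, and the same three-piece telescoping for the projected $\partial_\tau\Psi_{a(\tau)}$ terms. The only cosmetic difference is in the mixed-difference step for $\hL_{a(\tau)}-\hL_{b(\tau)}$ and $\hat\q_{a(\tau),j}-\hat\q_{b(\tau),j}$: the paper writes these double differences directly as tail integrals $\int_\tau^\infty[\dots]\,d\sigma$ and estimates the integrand, whereas you first invoke $C^2$-regularity of $a\mapsto\psi_{a,1}^2$ (resp.\ $a\mapsto\q_{a,j}$) to reduce to the scalar tail integral $\int_\tau^\infty(\dot a-\dot b)\,d\sigma$ --- both lead to the same bound $\lesssim e^{-\beep\tau}\|a-b\|_\A$.
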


\begin{proof}

\textbf{Estimate for $\hL_{a(\tau)}$.}
 We split up the norm in two parts by adding and subtracting $\hL_{b(\tau)} \Phi(\tau)$ in a standard way, and obtain
\begin{align}\label{E1}
 \| \hL_{a(\tau)} \Phi(\tau) - \hL_{b(\tau)} \Upsilon (\tau) \| &\leq \| \hL_{a(\tau)} - \hL_{b(\tau)} \| \|  \Phi(\tau) \| + \| \hL_{b(\tau)} \| \| \Phi(\tau) - \Upsilon(\tau) \| \nonumber \\
&\leq \delta e^{-\beep \tau} \| \hL_{a(\tau)} - \hL_{b(\tau)} \| + e^{-\beep \tau} \| \hL_{b(\tau)} \| \| \Phi - \Upsilon \|_\X.
\end{align}
 By Lemma~\ref{Laconv1} and Remark~\ref{rem:a},
\begin{align}\label{E2}
 \| \hL_{b(\tau)} \| = \| \L_{b(\tau)}' - \L_{b_\infty}' \| \lesssim | b(\tau) - b_\infty | \lesssim \delta e^{-\beep \tau},
\end{align}
 which establishes the estimate for the second term of the inequality~\eqref{E1}.
 To estimate the first term we first recall that for $u \in \H$
\[
 \hL_{a(\tau)} u = (\L_{a(\tau)}' - \L_{a_\infty}') u = \begin{pmatrix}
                                                         0 \\ 3 (\psi_{a(\tau),1}^2 - \psi_{a_\infty,1}^2)u_1
                                                        \end{pmatrix},
\]
 which implies
\begin{align*}
 \hL_{a(\tau)} u (\xi) - \hL_{b(\tau)} u(\xi) = \; & ( \psi_{a(\tau),1}^2 (\xi) - \psi_{a_\infty,1}^2 (\xi) - \psi_{b(\tau),1}^2 (\xi) + \psi_{b_\infty,1}^2 (\xi) ) \begin{pmatrix}
                                                  0 \\ 3 u_1 (\xi)
                                                 \end{pmatrix} \\
             = \;& \int_\tau^\infty \left[ \partial_\sigma \psi_{b(\sigma),1}^2 (\xi) - \partial_\sigma \psi_{a(\sigma),1}^2 (\xi) \right] d\sigma
                                                 \begin{pmatrix}
                                                   0 \\ 3 u_1 (\xi)
                                                  \end{pmatrix} \\
             = \;& \int_\tau^\infty \left[ \dot b^j(\sigma) \partial_{b^j} \psi_{b(\sigma),1}^2 (\xi) - \dot a^j(\sigma) \partial_{a^j} \psi_{a(\sigma),1}^2 (\xi) \right] d\sigma
                                                 \begin{pmatrix}
                                                   0 \\ 3 u_1 (\xi)
                                                  \end{pmatrix} .
\end{align*}
 Thus, by $H^1(\BB) \hookrightarrow L^2(\BB)$ and the fact that $\psi_a$ is smooth in $a$ (hence $\partial_a \psi^2_{a,1}$ is Lipschitz continuous with respect to $a$ small), this implies for the norm
\begin{align*}
 \| \hL_{a(\tau)} & u - \hL_{b(\tau)} u \| \\
\lesssim \; & \| u_1 \|_{L^2(\BB)} \int_\tau^\infty \| \dot b^j(\sigma) \partial_{b^j} \psi_{b(\sigma),1}^2 - \dot a^j(\sigma) \partial_{a^j} \psi_{a(\sigma),1}^2 \|_{L^\infty(\BB)} d\sigma \\
\lesssim \; & \| u \| \int_\tau^\infty |\dot b(\sigma) - \dot a (\sigma) | \| \partial_{b^j} \psi_{b(\sigma),1}^2 \|_{L^\infty(\BB)} + |\dot a(\sigma)| \| \partial_{b^j} \psi_{b(\sigma),1}^2 - \partial_{a^j} \psi_{a(\sigma),1}^2 \|_{L^\infty(\BB)} d\sigma \\
\lesssim \; & \| u \| \int_\tau^\infty e^{-\beep\sigma} \| a - b \|_\A + \delta e^{-\beep\tau} | a(\sigma) - b(\sigma) | d\sigma \\
\lesssim \; & e^{-\beep\tau} \| a - b \|_\A \| u \|,
\end{align*}
 hence
\begin{align}\label{E3}
 \| \hL_{a(\tau)} - \hL_{b(\tau)} \| \lesssim e^{-\beep\tau} \| a - b \|_\A.
\end{align}
 Together, \eqref{E1}--\eqref{E3} yield the desired inequality.

\medskip
\textbf{Estimate for $\N_{a(\tau)}$.}
 Instead of $\Phi(\tau)$ and $\Upsilon(\tau)$ we first consider the difference for $u,v \in \H$ fixed. Again, we decompose the norm into two parts and use the explicit form \eqref{Na} of the nonlinearity to obtain
\begin{align}\label{N1}
 \| \N_{a(\tau)} (u) &- \N_{b(\tau)} (v) \| \nonumber \\
\leq \; & \| \N_{a(\tau)} (u) - \N_{b(\tau)} (u) \| + \| \N_{b(\tau)} (u) - \N_{b(\tau)} (v) \| \nonumber \\
 \leq \; &
3 \| ( \psi_{a(\tau),1} - \psi_{b(\tau),1} ) u_1^2 \|_{L^2(\BB)} \nonumber \\
& + 3 \| \psi_{a(\tau),1} ( u_1^2 - v_1^2) \|_{L^2(\BB)} + \| u_1^3 - v_1^3 \|_{L^2(\BB)}.
\end{align}
 The first term can simply be estimated by
\begin{align}\label{N2}
 \| ( \psi_{1,a(\tau)} - \psi_{1,b(\tau)} ) u_1^2 \|_{L^2(\BB)} \lesssim | a(\tau) - b(\tau) | \| u_1^2 \|_{L^2(\BB)} \leq \| a - b \|_\A \| u \|^2,
\end{align}
 where we used the Sobolev embedding $H^1(\BB) \hookrightarrow L^4(\BB) \hookrightarrow L^2(\BB)$ for $u \in \H$ at the end. The generalized H\"older inequality for products and the Sobolev embedding $H^1(\BB) \hookrightarrow L^p(\BB)$, $p \in [1,6]$, for $u,v \in \H$ furthermore imply that
\begin{align}\label{N3}
 \| \psi_{1,a(\tau)} ( u_1^2 - v_1^2) \|_{L^2(\BB)} &\lesssim \| u_1^2 - v_1^2 \|_{L^2(\BB)} \leq \| u_1 - v_1 \|_{L^4(\BB)} \| u_1 + v_1 \|_{L^4(\BB)} \nonumber \\
&\lesssim \| u_1 - v_1 \|_{H^1(\BB)} ( \| u_1 \|_{H^1(\BB)} + \| v_1 \|_{H^1(\BB)} ) \nonumber \\
& \leq \| u - v \| ( \| u \| + \| v \| ),
\end{align}
 and, in a similar fashion,
\begin{align}\label{N4}
 \| u_1^3 - v_1^3 \|_{L^2(\BB)} &\leq \| u_1 - v_1 \|_{L^6(\BB)} \| u_1^2 + u_1 v_1 + v_1^2 \|_{L^3(\BB)} \nonumber \\
&\lesssim \| u - v \| (\| u_1 \|^2_{L^6(\BB)} + \| u_1 \|_{L^6(\BB)} \| v_1 \|_{L^6(\BB)} + \| v_1 \|^2_{L^6(\BB)} ) \nonumber \\
&\lesssim \| u - v \| ( \| u \| + \| v \| )^2.
\end{align}
Combining \eqref{N1}--\eqref{N4} for $u=\Phi(\tau)$ and $v=\Upsilon(\tau)$ with the definition of $\Xd$ yields
\[
 \| \N_{a(\tau)} (\Phi(\tau)) - \N_{b(\tau)} (\Upsilon(\tau)) \| \lesssim \delta^2 e^{-\dbeep\tau} \| a - b \|_\A + \delta e^{-\dbeep \tau} \| \Phi(\tau) - \Upsilon (\tau) \|_\X.
\]

\medskip
\textbf{Estimates for $\partial_\tau \Psi_{a(\tau)}$.}
 Recall that by Proposition~\ref{prop:proj} the projections $\Q_a$ and $\P_a$ are transversal and that the eigenfunctions $\q_{a,j} = \partial_{a^j} \Psi_{a}$, $j \in \{ 1,2,3 \}$ of $\L_a$, derived in Lemma~\ref{LaEV}, span $\rg \Q_a$. Thus for $\hat \q_{a,j} := \q_{a,j} - \q_{a_\infty,j}$ we have by the chain rule that
\[
 \P_{a_\infty} \partial_\tau \Psi_{a(\tau)} = \P_{a_\infty} \dot a^j(\tau) \partial_{a^j} \Psi_{a(\tau)} = \dot a^j(\tau) \P_{a_\infty} \q_{a(\tau),j} = \dot a^j(\tau) \P_{a_\infty} \hat \q_{a(\tau),j}.
\]
 The triangle inequality thus once more implies
\begin{align}
 \| \P_{a_\infty} \partial_\tau &\Psi_{a(\tau)} - \P_{b_\infty} \partial_\tau \Psi_{b(\tau)} \| \nonumber \\
&\leq \| ( \dot a^j(\tau) \P_{a_\infty} - \dot b^j(\tau) \P_{b_\infty} ) \hat \q_{a(\tau),j} \| + \| \dot b^j(\tau) \P_{b_\infty} (\hat \q_{a(\tau),j} - \hat \q_{b(\tau),j} )\| \nonumber \\
&\leq |\dot a^j(\tau) - \dot b^j(\tau) | \| \hat \q_{a(\tau),j} \| + |\dot b^j(\tau)| \| \P_{a_\infty} - \P_{b_\infty} \| \|\hat \q_{a(\tau),j} \| \nonumber \\
&\quad + |\dot b^j(\tau)| \| \hat \q_{a(\tau),j} - \hat \q_{b(\tau),j} \|. \label{P1}
\end{align}
 We use the Lipschitz estimates already obtained in Lemma~\ref{lem:decay1}, together we the definition of $\hat \q_{a,j}$ and \eqref{eq:ainfty}, to derive the bounds
\begin{align}\label{qhat}
 \| \hat \q_{a(\tau),j} \| = \| \q_{a(\tau),j} - \q_{a_\infty,j} \| \lesssim |a(\tau) - a_\infty | \lesssim \delta e^{-\beep\tau}
\end{align}
 and
\begin{align}\label{qhatdiff}
 \| \hat \q_{a(\tau),j} - \hat \q_{b(\tau),j} \| 
 &\leq \int_\tau^\infty \| \partial_\sigma \hat \q_{b(\sigma),j} - \partial_\sigma \hat \q_{a(\sigma),j} \| d\sigma \nonumber \\
&\leq \int_\tau^\infty | \dot b^k(\sigma) - \dot a^k(\sigma) | \| \partial_{b^k} \hat \q_{b(\sigma),j} \| + |\dot a^k(\sigma)| \| \partial_{b^k} \hat \q_{b(\sigma),j} - \partial_{a^k} \hat \q_{a(\sigma),j} \| d\sigma \nonumber \\
&\lesssim \int_\tau^\infty e^{-\beep\sigma} \| a - b \|_\A + \delta e^{-\beep\sigma} | a(\sigma) - b(\sigma) | d\sigma \nonumber \\
&\lesssim e^{-\beep\tau} \| a - b \|_\A.
\end{align}
 Lemma~\ref{lem:decay1} and \eqref{P1} together with the definition of $\Ad$ thus yield
\begin{align*}
 \| \P_{a_\infty} \partial_\tau \Psi_{a(\tau)} - \P_{b_\infty} \partial_\tau \Psi_{b(\tau)} \| 
&\lesssim  \delta e^{-\dbeep\tau} \| a- b\|_\A + \delta^2 e^{-\dbeep\tau} |a_\infty-b_\infty| \\
& \quad + \delta e^{-\dbeep\tau} \| a- b\|_\A \\
&\lesssim \delta e^{-\dbeep\tau} \| a - b \|_\A.
\end{align*}
 In a similar fashion, since $\Q_a$ is transversal to $\I-\Q_a$ and hence
\[
 (\I - \Q_{a_\infty}) \partial_\tau \Psi_{a(\tau)} = \dot a^j(\tau) (\I - \Q_{a_\infty}) \hat \q_{a(\tau),j},
\]
 it follows that the last Lipschitz estimate holds.
\end{proof}

\begin{remark}
 The estimate~\eqref{N4} in the previous proof makes clear why the cubic nonlinearity can be controlled in the Banach space $\H \approx H^1(\BB) \times L^2(\BB)$, and why the same approach (via Sobolev embedding and generalized H\"older inequality) cannot work for $p>3$.
\end{remark}

\subsection{The modulation equation for $a$}
\label{ssec4.2}

 By choosing the rapidity $a(\tau)$ in a suitable way we will suppress the instability of $\Q_{a_\infty}$ that arises from the Lorentz symmetry of the equation. We follow the approach in \cite{DS}[Sec.\ 5.3 and 5.4], which assumes that $\Phi$ is given, to derive an equation for $a$ by applying the spectral projection $\Q_{a_\infty}$ corresponding to the Lorentz instability. Since $\S_a (\tau) \Q_{a,j} = \Q_{a,j}$ by Proposition~\ref{prop:proj}, multiplication of the integral equation \eqref{intmodeq} by $\Q_{a_\infty,j}$, $j \in \{1,2,3\}$, yields
\[
 \Q_{a_\infty,j} \Phi(\tau) = \Q_{a_\infty,j} u + \Q_{a_\infty,j} \int_0^\tau \left[ \hL_{a(\sigma)} \Phi(\sigma) + \N_{a(\sigma)}(\Phi(\sigma)) - \partial_\sigma \Psi_{a(\sigma)} \right] d \sigma
\]
 The right hand side will not vanish for $\tau = 0$ unless $\Q_{a_\infty} u = 0$. Since we are only interested in the long-term evolution it however suffices to assume that $\Q_{a_\infty} \Phi(\tau)$ vanishes for large $\tau$. For a smooth cut-off function $\chi$, which is $1$ on $[0,1]$, $0$ for $\tau\geq 4$ and derivative $|\dot\chi| \leq 1$ everywhere, and the ansatz $\Q_{a_\infty} \Phi(\tau) = \chi(\tau) v \in \rg \Q_{a_\infty}$, evaluation at $\tau=0$ yields $\Q_{a_\infty} \Phi(\tau) = \chi(\tau) \Q_{a_\infty} u$. Thus the modulation equation for $a$ is
\begin{equation}\label{Qmod}
 [1 - \chi(\tau)] \Q_{a_\infty,j} u + \Q_{a_\infty,j} \int_0^\tau \left[ \hL_{a(\sigma)} \Phi(\sigma) + \N_{a(\sigma)}(\Phi(\sigma)) - \partial_\sigma \Psi_{a(\sigma)} \right] d\sigma = 0.
\end{equation}
 More explicitly, using $\hat \q_{a(\tau),j} = \q_{a(\tau),j} - \q_{a_\infty,j}$ (see Lemma~\ref{LaEV} for the explicit form of the eigenvectors that has been used to replace the last term $\partial_\sigma \Psi_{a(\sigma)}$ in \eqref{Qmod}), we may write for $j\in\{1,2,3\}$
\begin{align}\label{modeq:a}
 a_j(\tau) \q_{a_\infty,j} = \; & [1-\chi(\tau)] \Q_{a_\infty,j} u + \Q_{a_\infty,j} \int_0^\tau \left[ \hL_{a(\sigma)} \Phi(\sigma) + \N_{a(\sigma)}(\Phi(\sigma)) \right] d\sigma \nonumber \\
& - \Q_{a_\infty,j} \int_0^\tau \dot a^k(\sigma) \hat \q_{a(\sigma),k} d\sigma \nonumber \\
= \; & - \int_0^\tau \dot\chi(\sigma) \Q_{a_\infty,j} u d\sigma
+ \int_0^\tau \Q_{a_\infty,j} \left[ \hL_{a(\sigma)} \Phi(\sigma) + \N_{a(\sigma)}(\Phi(\sigma)) \right] d\sigma \nonumber \\
 & -  \int_0^\tau \Q_{a_\infty,j} \dot a^k(\sigma) \hat \q_{a(\sigma),k} d\sigma . 
\end{align}
 For $\Phi$ sufficiently small, equation~\eqref{modeq:a} can be solved using the Banach fixed point theorem. The contraction argument is based on the Lipschitz bounds derived in Lemmas~\ref{lem:decay1} and \ref{lem:decay4} as well as the estimates from Lemma~\ref{lem:decay3}.

\begin{proposition}\label{lem:a}
 Fix $\varepsilon \in ( 0, \frac{1}{2})$. There exist $\delta>0$ and $c>0$ such that the modulation equations \eqref{modeq:a} with $\Phi \in \Xd$ and initial value $\| u \| \leq \frac{\delta}{c}$ have a unique solution $a \in \Ad$. Moreover, the solution operator $\Phi \mapsto a \colon \Xd \to \Ad$ is Lipschitz continuous.
\end{proposition}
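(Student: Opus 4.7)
The plan is to reformulate \eqref{modeq:a} as a fixed-point problem on the closed ball $\Ad$ and apply Banach's theorem, then deduce the Lipschitz dependence $\Phi \mapsto a$ from a second comparison. Given $\Phi \in \Xd$ and $u$ with $\|u\| \leq \delta/c$ (with $c$ to be fixed), I would define a map $\F : \Ad \to \cC^1([0,\infty),\RR^3)$ by $\F(a) =: \tilde a$ through
\[
 \tilde a_j(\tau)\,\q_{a_\infty,j} = -\int_0^\tau \dot\chi(\sigma) \Q_{a_\infty,j} u \,d\sigma + \int_0^\tau \Q_{a_\infty,j} \bigl[\hL_{a(\sigma)}\Phi(\sigma) + \N_{a(\sigma)}(\Phi(\sigma))\bigr] d\sigma - \int_0^\tau \Q_{a_\infty,j}\, \dot a^k(\sigma)\,\hat \q_{a(\sigma),k}\,d\sigma,
\]
for each $j \in \{1,2,3\}$ (no sum over $j$), where $a_\infty$ is read off from the input $a$. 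The right-hand side lies in $\rg \Q_{a_\infty}$, and since $\{\q_{a_\infty,j}\}$ stays uniformly close to the basis $\{\q_{0,j}\}$ of this three-dimensional space (Lemma~\ref{Laconv1}), $\tilde a_j(\tau)$ is recovered as the scalar coefficient via a bounded, Lipschitz-in-$a_\infty$ inverse. Note $\tilde a(0)=0$ is automatic.

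To check $\F(\Ad) \subseteq \Ad$, I would differentiate the defining relation in $\tau$ and invert the basis matrix uniformly to obtain
\[
 |\dot{\tilde a}(\tau)| \lesssim \|u\|\,\mathbf{1}_{[1,4]}(\tau) + \|\hL_{a(\tau)}\Phi(\tau)\| + \|\N_{a(\tau)}(\Phi(\tau))\| + |\dot a(\tau)|\,\max_k \|\hat \q_{a(\tau),k}\|.
\]
By Lemma~\ref{lem:decay3} the two middle terms are $\lesssim \delta^2 e^{-\dbeep\tau}$, and using \eqref{qhat} together with $a\in\Ad$ the last term is also bounded by $\delta^2 e^{-\dbeep\tau}$. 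Since $\dbeep > \beep$ strictly and $\dot\chi$ is supported on a fixed compact interval, choosing $c$ large enough and $\delta$ small enough yields $|\dot{\tilde a}(\tau)| \leq \delta\,e^{-\beep\tau}$, so $\tilde a \in \Ad$.

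For the contraction, take $a,b \in \Ad$ and set $\tilde a := \F(a)$, $\tilde b := \F(b)$. Writing $\tilde b_j\q_{b_\infty,j} = \tilde b_j\q_{a_\infty,j} + \tilde b_j(\q_{b_\infty,j} - \q_{a_\infty,j})$ and subtracting the two defining relations, the resulting difference splits into: Lipschitz differences of projections $\Q_{a_\infty,j}-\Q_{b_\infty,j}$ and eigenvectors $\q_{a_\infty,j}-\q_{b_\infty,j}$ controlled by $|a_\infty - b_\infty| \lesssim \|a-b\|_\A$ via Lemma~\ref{lem:decay1}; the Lipschitz bounds of Lemma~\ref{lem:decay4} applied to $\hL\Phi$, $\N\Phi$, and $\partial_\sigma\Psi_{a(\sigma)}$; and a cross term $(\dot a - \dot b)\hat\q$ bounded by $\delta\,e^{-\dbeep\tau}\|a-b\|_\A$. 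Each contribution is of size $\delta\,e^{-\beep\tau}\|a-b\|_\A$, so $\|\tilde a - \tilde b\|_\A \lesssim \delta\|a-b\|_\A$ is a strict contraction after further shrinking $\delta$. Banach's theorem then yields a unique fixed point $a \in \Ad$, and the Lipschitz continuity of $\Phi\mapsto a$ follows from the same comparison applied to two inputs $\Phi,\Upsilon\in\Xd$: Lemma~\ref{lem:decay4} now contributes an extra $\|\Phi-\Upsilon\|_\X$ term that survives after the $\delta\|a-b\|_\A$ self-term is absorbed into the LHS.

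The main obstacle is handling the nonlocal dependence on $a_\infty$ (it changes both the basis used to extract $\tilde a_j$ and all spectral projections) simultaneously with the $\dot a$ term appearing on the RHS. The estimates close only because two slow factors $e^{-\beep\tau}$ multiply to give $e^{-\dbeep\tau}$, strictly faster than the $e^{-\beep\tau}$ rate required by $\Ad$; this slack, combined with smallness of $\delta$ and of $\|u\|/\delta$, is precisely what allows all nonlinear and $a_\infty$-sensitive remainders to be absorbed.
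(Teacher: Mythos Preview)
Your proposal is correct and follows essentially the same approach as the paper: recast \eqref{modeq:a} as a fixed-point problem for $a$ on $\Ad$ by extracting the scalar coefficients along the basis $\{\q_{a_\infty,j}\}$, verify self-mapping and contraction via Lemmas~\ref{lem:decay1}, \ref{lem:decay3}, \ref{lem:decay4} and the bound \eqref{qhat}, then obtain Lipschitz dependence on $\Phi$ by comparing two fixed points. The paper makes the coefficient extraction explicit via the inner product $\langle\,\cdot\,,\q_{a_\infty,j}\rangle/\|\q_{a_\infty,j}\|^2$, but this is exactly your ``bounded, Lipschitz-in-$a_\infty$ inverse'' on the one-dimensional range; your identification of the key slack $e^{-\dbeep\tau}$ versus $e^{-\beep\tau}$ is also precisely what drives the paper's argument.
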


\begin{proof}
 Taking equation~\eqref{modeq:a} in the inner product with $\q_{a_\infty,j}$, $j \in \{1,2,3\}$, inspires us to define $\G_u = (G_{u,1},G_{u,2},G_{u,3})$ such that
\begin{align}\label{aj}
 a_j(\tau) & = \int_0^\tau \| \q_{a_\infty,j} \|^{-2} \big\langle - \dot\chi(\sigma) \Q_{a_\infty,j} u + \Q_{a_\infty,j} \left[ \hL_{a(\sigma)} \Phi(\sigma) + \N_{a(\sigma)}(\Phi(\sigma)) \right] \nonumber \\
& \qquad \qquad \qquad \qquad - \Q_{a_\infty,j} \dot a^k(\sigma) \hat \q_{a(\sigma),k} , \q_{a_\infty,j} \big\rangle \, d\sigma \\
 & =: G_{u,j}(\Phi,a)(\tau). \nonumber
\end{align}
 It remains to be shown that $\G_u$ maps $\Ad$ to itself, $\G_u$ is a contraction and that $a$ depends Lipschitz continuously on $\Phi$.

\medskip
\textbf{Step 1. $\G_u$ maps $\Ad$ to itself.}
 Let $a \in \Ad$. We treat each component of the integrand of $\G_u$ individually. By definition, the derivative of $\chi$ is bounded and vanishes for large $\sigma$, thus trivially
\[
 \| \dot\chi(\sigma) \Q_{a_\infty,j} u \| \leq | \dot\chi(\sigma)| \| u \| \lesssim \frac{\delta}{c} e^{-\dbeep \sigma}.
\]
 Moreover, by Lemma~\ref{lem:decay3} we can estimate the term involving $\hL_{a(\sigma)}$ and $\N_{a(\sigma)}$, i.e.,
\[
 \left\| \Q_{a_\infty,j} \left[ \hL_{a(\sigma)} \Phi(\sigma) + \N_{a(\sigma)} (\Phi(\sigma)) \right] \right\| \lesssim \delta^2 e^{-\dbeep\sigma}.
\]
 The last term in \eqref{aj} is estimated using \eqref{qhat} and the definition of $\Ad$,
\[
 \| \Q_{a_\infty,j} \dot a^k(\sigma) \hat \q_{a(\sigma),k} \| \leq |\dot a^k(\sigma)| \| \hat \q_{a(\sigma),k} \| \lesssim \delta^2 e^{-\dbeep\sigma}.
\]
 Summing up and integrating we thus obtain by the Cauchy--Schwarz inequality and the fact that $\q_{a_\infty,j}$ is smooth and uniformly bounded from below for $a_\infty$ sufficiently small the estimate
\[
 | \dot G_{u,j}(\Phi,a)(\sigma) | \lesssim \| \q_{a_\infty,j} \|^{-1} \left( \frac{\delta}{c} + \delta^2 \right) e^{-\dbeep\sigma} \lesssim \frac{\delta}{c} e^{-\dbeep\sigma}.
\]
 In particular, $\G_u(\Phi,a) \in \Ad$ for $c$ large.

\medskip
\textbf{Step 2. $\G$ is a contraction on $\Ad$.}
 Suppose $a,b \in \Ad$. In a similar fashion as in Step 1 we employ the Lipschitz bound of Lemma~\ref{lem:decay1} to obtain
\begin{align*}
 \| \dot\chi(\sigma) \Q_{a_\infty,j} u - \dot\chi(\sigma) \Q_{b_\infty,j} u \| &\leq | \dot \chi(\sigma) | \| \Q_{a_\infty,j} - \Q_{b_\infty,j} \| \| u \| \\ & \lesssim \frac{\delta}{c} e^{-\dbeep\sigma} | a_\infty - b_\infty | \lesssim \delta e^{-\dbeep\sigma} \| a - b \|_\A.
\end{align*}
 Lemmas~\ref{lem:decay1}, \ref{lem:decay3} and \ref{lem:decay4} imply for the second term
\begin{align*}
 \Big\| \Q_{a_\infty,j} & \left[ \hL_{a(\sigma)} \Phi(\sigma) + \N_{a(\sigma)} (\Phi(\sigma)) \right] - \Q_{b_\infty,j} \left[ \hL_{b(\sigma)} \Phi(\sigma) + \N_{b(\sigma)} (\Phi(\sigma)) \right] \Big\| \\
\leq \; & \| ( \hL_{a(\sigma)} - \hL_{b(\sigma)}) \Phi(\sigma) \| + \| \N_{a(\sigma)} (\Phi(\sigma)) - \N_{b(\sigma)} (\Phi(\sigma)) \| \\
& + \| \Q_{a_\infty,j} - \Q_{b_\infty,j} \| \left( \| \hL_{a(\sigma)} \Phi(\sigma) \| + \| \N_{a(\sigma)} (\Phi(\sigma)) \| \right) \\
\lesssim \; & \delta e^{-\dbeep\sigma} \| a - b \|_\A + |a_\infty-b_\infty| \delta^2 e^{-\dbeep\sigma} \\
\lesssim \; & \delta e^{-\dbeep\sigma} \| a - b \|_\A.
\end{align*}
 Using \eqref{qhat}--\eqref{qhatdiff} and the second estimate in Lemma~\ref{lem:decay1} the third term in the integrand is estimated by
\begin{align*}
 \| \dot a^k(\sigma) &\Q_{a_\infty,j} \hat \q_{a(\sigma),k} - \dot b^k(\sigma) \Q_{b_\infty,j} \hat \q_{b(\sigma),k} \| \\
&\leq |\dot a^k(\sigma) - \dot b^k(\sigma) | \| \hat \q_{a(\sigma),k} \| + |\dot b^k(\sigma)| \| \Q_{a_\infty,j} \hat \q_{a(\sigma),k} - \Q_{b_\infty,j} \hat \q_{b(\sigma),k} \| \\
&\lesssim \delta e^{-\dbeep\sigma} \| a - b \|_\A + \delta e^{-\beep\sigma} \left( \| \Q_{a_\infty,j} (\hat \q_{a(\sigma),k} - \hat \q_{b(\sigma),k} )\| + \| \Q_{a_\infty,j} - \Q_{b_\infty,j} \| \| \hat \q_{b(\sigma),k} \| \right) \\
&\leq \delta e^{-\dbeep\sigma} \| a - b \|_\A + \delta e^{-\beep\sigma} \left( e^{-\beep\sigma} \| a- b\|_\A + |a_\infty-b_\infty| \delta e^{-\beep\sigma} \right) \\
&\lesssim \delta e^{-\dbeep\sigma} \| a - b \|_\A.
\end{align*}
 We are now in a position to estimate the integrand of \eqref{aj} using the Cauchy--Schwarz inequality. With the notation
\[
 H_{u,j}(\Phi,a)(\sigma) := - \dot\chi(\sigma) \Q_{a_\infty,j} u + \Q_{a_\infty,j} \left[ \hL_{a(\sigma)} \Phi(\sigma) + \N_{a(\sigma)}(\Phi(\sigma)) \right] - \Q_{a_\infty,j} \dot a^k(\sigma) \hat \q_{a(\sigma),k},
\]
 we just derived the estimate
\[
 \| H_{u,j}(\Phi,a)(\sigma) - H_{u,j}(\Phi,b)(\sigma) \| \lesssim \delta e^{-\dbeep\sigma} \| a- b \|_\A,
\]
 and in Step 1 the estimate
\[
 \| H_{u,j}(\Phi,a)(\sigma) \| \lesssim \delta e^{-\dbeep\sigma}.
\]
 Together this yields
\begin{align}\label{Gj1}
 | \dot G_{u,j}(\Phi,a)&(\sigma) - \dot G_{u,j}(\Phi,b)(\sigma) | \nonumber \\
= \; & \big| \| \q_{a_\infty,j} \|^{-2} \langle H_{u,j}(\Phi,a)(\sigma) - H_{u,j}(\Phi,b)(\sigma), \q_{a_\infty,j} \rangle  \nonumber \\
& + ( \| \q_{a_\infty,j} \|^{-2} - \| \q_{b_\infty,j} \|^{-2} ) \langle H_{u,j}(\Phi,b)(\sigma),\q_{a_\infty,j} \rangle \nonumber \\
& + \| \q_{b_\infty,j} \|^{-2} \langle H_{u,j}(\Phi,b)(\sigma) , \q_{a_\infty,j} - \q_{b_\infty,j} \rangle  \big| \nonumber \\
\lesssim \; & \| \q_{a_\infty,j} \|^{-1} \delta e^{-\dbeep\sigma} \|a-b\|_\A + |a_\infty - b_\infty| \delta e^{-\dbeep\sigma} \| \q_{a_\infty,j} \| \nonumber \\
&+ \delta e^{-\dbeep\sigma} \| \q_{a_\infty,j} -\q_{b_\infty,j}\| \nonumber \\
\lesssim \; & \delta e^{-\dbeep\sigma} \| a- b\|_\A,
\end{align}
 since $\q_{a_\infty,j}$ depends smoothly on $a_\infty$ and is therefore bounded uniformly for small $a_\infty$, and the difference $\q_{a_\infty,j} - \q_{b_\infty,j}$ is controlled by Lemma~\ref{lem:decay1}. Hence for all $a,b\in\Ad$,
\begin{align}\label{G1}
 \| \G_u(\Phi,a) &- \G_u(\Phi,b) \|_\A \nonumber \\
&\leq \sup_{\tau > 0}  \left( e^{\beep\tau}  | \dot\G_u(\Phi,a)(\tau) - \dot\G_u(\Phi,b)(\tau) | + | \G_u(\Phi,a)(\tau) - \G_u(\Phi,b)(\tau)| \right) \nonumber \\
&\leq \sup_{\tau \geq 0} \left( \delta e^{-\dbeep \tau} \|a-b\|_\A \right) +
\int_0^\infty | \dot \G_u(\Phi,a)(\sigma) - \dot \G_u(\Phi,b)(\sigma) | d\sigma \nonumber \\
&\lesssim \delta \| a- b\|_\A,
\end{align}
 which proves that $\G_u$ is a contraction for $\delta$ sufficiently small. The Banach fixed point theorem implies the existence and uniqueness of $a \in \Ad$ such that $a(\tau) = \G_u(\Phi,a)(\tau)$ for $\tau \geq 0$, which solves \eqref{aj} and therefore \eqref{modeq:a}.

\medskip
\textbf{Step 3. The solution operator is Lipschitz continuous.}
 Suppose $a = \G_u(\Phi,a)$ and $b = \G_u(\Upsilon,b)$ for $\Phi,\Upsilon \in \Xd$. By \eqref{aj} this implies, for all $j \in \{1,2,3\}$,
\begin{align*}
 | \dot a^j(\tau) - \dot b^j(\tau) | \leq \; & | \dot G_{u,j}(\Phi,a)(\tau) - \dot G_{u,j}(\Phi,b)(\tau) | + | \dot G_{u,j}(\Phi,b)(\tau) - \dot G_{u,j}(\Upsilon,b)(\tau) |.
\end{align*}
 The first term was already estimated in \eqref{Gj1} of Step 2. The second term can be estimated in a similar fashion using the Cauchy--Schwarz inequality and Lemma~\ref{lem:decay4}. More precisely, since $b$ is small,
\begin{align*}
 | \dot G_{u,j}(\Phi,b)(\tau) - \dot G_{u,j}(\Upsilon,b)(\tau) |
&\leq \| \q_{b_\infty,j} \|^{-1} \| H_{u,j}(\Phi,b)(\tau) - H_{u,j}(\Upsilon,b)(\tau) \| \\
&\lesssim \| \hL_{b(\tau)} \Phi(\tau) - \hL_{b(\tau)} \Upsilon(\tau) \| + \| \N_{b(\tau)} (\Phi(\tau)) + \N_{b(\tau)} (\Upsilon(\tau)) \| \\
&\lesssim \delta e^{-\dbeep\tau} \| \Phi - \Upsilon \|_\X.
\end{align*}
 Together with \eqref{Gj1} we thus obtain that
\[
 | \dot a(\tau) - \dot b(\tau) | \lesssim \delta e^{-\dbeep\tau} \left( \| a- b\|_\A + \| \Phi - \Upsilon \|_\X \right),
\]
 and by integration also
\[
| a(\tau) - b(\tau)| \lesssim \delta \left( \| a- b \|_\A + \| \Phi - \Upsilon \|_\X \right).
\]
 Hence the desired Lipschitz estimate is obtained for $\delta$ sufficiently small,
\[
 \| a - b \|_\A = \sup_{\tau > 0} \left( e^{\beep\tau} |\dot a(\tau) - \dot b(\tau) | + | a(\tau) - b(\tau)| \right) \lesssim \| \Phi - \Upsilon \|_\X. \qedhere
\]
\end{proof}

\subsection{A modified equation for the time-translation instability}
\label{ssec4.3}

 On the subspace $\rg \P_{a_\infty}$ we add a correction term to account for the time-translation instability. This is a version of the Lyapunov--Perron method (see, e.g., \cite[Sec.\ 3.3]{NS}), initially developed for dealing with instabilities arising in finite-dimensional dynamical systems. The choice of the correction term,
\begin{align}\label{C}
 \C_u(\Phi,a) := \P_{a_\infty} u + \P_{a_\infty} \int_0^\infty e^{-\sigma} \left[ \hL_{a(\sigma)} \Phi(\sigma) + \N_{a(\sigma)}(\Phi(\sigma)) - \partial_\sigma \Psi_{a(\sigma)} \right] d\sigma,
\end{align}
is motivated from applying $\P_{a_\infty}$ to the Duhamel formula \eqref{intmodeq}, which together with the growth rate $\S_a(\tau) \P_a = e^\tau \P_a$ obtained in Proposition~\ref{prop:proj} yields
\begin{align*}
 \P_{a_\infty} \Phi(\tau) = \; & e^\tau \P_{a_\infty} u + e^\tau \P_{a_\infty} \int_0^\tau e^{-\sigma} \left[ \hL_{a(\sigma)} \Phi(\sigma) + \N_{a(\sigma)}(\Phi(\sigma)) - \partial_\sigma \Psi_{a(\sigma)} \right] d\sigma \\
 = \; & \S_{a_\infty} (\tau) \left( \P_{a_\infty} u + \P_{a_\infty} \int_0^\tau e^{-\sigma} \left[ \hL_{a(\sigma)} \Phi(\sigma) + \N_{a(\sigma)}(\Phi(\sigma)) - \partial_\sigma \Psi_{a(\sigma)} \right] d\sigma \right).
\end{align*}
Thus we consider the modified equation
\begin{equation}\label{Peq}
\begin{aligned}
 \Phi(\tau) = \; & \S_{a_\infty} (\tau) [ u - \C_u(\Phi,a) ]\\
             & + \int_0^\tau \S_{a_\infty}(\tau-\sigma) \left[ \hL_{a(\sigma)} \Phi(\sigma) + \N_{a(\sigma)}(\Phi(\sigma)) - \partial_\sigma \Psi_{a(\sigma)} \right] d\sigma.
\end{aligned}
\end{equation}
For simplicity, we denote the right hand side of \eqref{Peq} by $\wK_u(\Phi,a)(\tau)$, so that
\[
 \K_u(\Phi,a)(\tau) = \wK_u(\Phi,a)(\tau) + \S_{a_\infty}(\tau) \C_u(\Phi,a),
\]
 where $\K_u(\Phi,a)(\tau)$ is the right hand side of the unmodified weak equation \eqref{intmodeq}. We will see that $\wK_u(\Phi,a)$ is a contraction, which again allows us to apply the Banach fixed point theorem to solve~\eqref{Peq}.

\begin{proposition}\label{prop:modsol}
 Fix $\varepsilon \in ( 0, \frac{1}{2})$. There exist $c >0$ sufficiently large and $\delta>0$ sufficiently small so that $\| u \| \leq \frac{\delta}{c}$ implies the existence and uniqueness of functions $\Phi \in \Xd$, $a \in \Ad$ such that the modified equation \eqref{Peq} holds for all $\tau \geq 0$. Moreover, the solution operator $u \mapsto (\Phi,a) \colon B_{\delta/c} := \{ u \in \H \, | \, \| u \| \leq \frac{\delta}{c} \} \subseteq \H \to \Xd \times \Ad$ is Lipschitz continuous.
\end{proposition}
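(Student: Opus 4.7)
The plan is to run a second Banach fixed-point argument, now for $\Phi$. For each $\Phi \in \Xd$ and each initial datum $u$ with $\|u\|\leq \delta/c$, Proposition~\ref{lem:a} supplies a unique rapidity $a_\Phi \in \Ad$ depending Lipschitz-continuously on $\Phi$. We then look for a fixed point of the self-map $\Phi \mapsto \wK_u(\Phi, a_\Phi)$ on $\Xd$. Once this map is shown to preserve $\Xd$ and to be a strict contraction (for $\delta$ small and $c$ large), Banach's theorem produces a unique $\Phi \in \Xd$ solving \eqref{Peq} with $a = a_\Phi$, and Lipschitz continuity of $u \mapsto (\Phi,a)$ follows from the parameter version of the theorem.

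\textbf{Self-mapping via spectral decomposition.} Decompose along the three invariant subspaces of $\L_{a_\infty}$ via $\I = \Q_{a_\infty} + \P_{a_\infty} + \wP_{a_\infty}$ from Proposition~\ref{prop:proj}. On $\rg \Q_{a_\infty}$, the definition of $a_\Phi$ via the modulation equation~\eqref{Qmod} forces $\Q_{a_\infty} \wK_u(\Phi, a_\Phi)(\tau) = \chi(\tau) \Q_{a_\infty} u$, which is controlled by $\|u\|\,e^{-\beep\tau}$ since $\chi$ has compact support. On $\rg \P_{a_\infty}$, the correction term $\C_u(\Phi,a)$ is engineered so that the unstable $e^\tau$-mode cancels and
\[
 \P_{a_\infty}\wK_u(\Phi,a)(\tau) = -\int_\tau^\infty e^{\tau-\sigma}\P_{a_\infty}\bigl[\hL_{a(\sigma)}\Phi(\sigma) + \N_{a(\sigma)}(\Phi(\sigma)) - \partial_\sigma \Psi_{a(\sigma)}\bigr]\, d\sigma;
\]
Lemma~\ref{lem:decay3} bounds the integrand by $\delta^2 e^{-\dbeep\sigma}$, and since $\dbeep > \beep$ (using $\varepsilon<1/2$) the integral is $\lesssim \delta^2 e^{-\beep\tau}$. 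On $\rg \wP_{a_\infty}$, the semigroup decay $\|\S_{a_\infty}(\tau)\wP_{a_\infty}\| \lesssim e^{-\beep\tau}$ from Proposition~\ref{prop:proj} combined with Lemma~\ref{lem:decay3} yields a bound $\lesssim (\delta/c + \delta^2)\,e^{-\beep\tau}$. Summing, $\|\wK_u(\Phi, a_\Phi)\|_\X \lesssim \delta/c + \delta^2 \leq \delta$ for $c$ large and $\delta$ small.

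\textbf{Contraction, Lipschitz dependence, and main obstacle.} Applying the same three-projection split to $\wK_u(\Phi,a) - \wK_u(\Upsilon, b)$ with the Lipschitz bounds of Lemmas~\ref{lem:decay1} and~\ref{lem:decay4} yields
\[
 \|\wK_u(\Phi,a) - \wK_u(\Upsilon, b)\|_\X \lesssim \delta\bigl(\|\Phi - \Upsilon\|_\X + \|a - b\|_\A\bigr),
\]
which, after composing with the Lipschitz estimate $\|a_\Phi - a_\Upsilon\|_\A \lesssim \|\Phi - \Upsilon\|_\X$ from Proposition~\ref{lem:a}, gives a contraction constant $\lesssim \delta$. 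The Banach theorem then produces the unique fixed-point pair $(\Phi, a_\Phi)$. Lipschitz dependence on $u$ is obtained by differencing the fixed-point identities at two data $u_1, u_2$ and observing that the direct $u$-dependence is linear through the terms $\S_{a_\infty}(\tau)[u - \C_u(\Phi,a)]$ and $\Q_{a_\infty} u$ in \eqref{Qmod}, so that the usual bootstrap yields $\|\Phi_1-\Phi_2\|_\X + \|a_1-a_2\|_\A \lesssim \|u_1-u_2\|$. The delicate point is keeping track of all three $a_\infty$-dependences simultaneously: every occurrence of $\P_{a_\infty} - \P_{b_\infty}$ or $\Q_{a_\infty} - \Q_{b_\infty}$ costs a factor $|a_\infty - b_\infty|$, and one must verify that the exponents from Lemmas~\ref{lem:decay1}, \ref{lem:decay3}, \ref{lem:decay4} conspire so that this cost is absorbed into the $\delta$-gain rather than overwhelming it.
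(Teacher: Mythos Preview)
Your proposal is correct and follows essentially the same approach as the paper: a Banach fixed-point argument for $\Phi$ (with $a=a_\Phi$ supplied by Proposition~\ref{lem:a}), carried out by decomposing $\wK_u$ along the three spectral projections $\Q_{a_\infty}$, $\P_{a_\infty}$, $\wP_{a_\infty}$, using the cancellations \eqref{Qsimplified} and \eqref{Psimplified} together with Lemmas~\ref{lem:decay1}, \ref{lem:decay3}, \ref{lem:decay4} for both the self-mapping and contraction steps, and then bootstrapping the Lipschitz dependence on $u$. Your identification of the ``delicate point''---tracking all the $a_\infty$-dependent projections and semigroups via the $|a_\infty-b_\infty|$ estimates---is exactly the technical content of the paper's Step~2 and Step~3.
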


Note that $a = a_u(\Phi)$ in the above proposition due to the discussion on the Lorentz symmetry in Proposition~\ref{lem:a} in the previous section.

\begin{proof}
 The existence proof consists of two parts. First, it is shown that for sufficiently small $\delta>0$, $\Phi \in \Xd$ implies $\wK_u(\Phi,a) \in \Xd$ for a unique $a=a_u(\Phi) \in \Ad$. Subsequently we prove that $\wK_u$ is a contraction, i.e.,
\begin{align}\label{Ku:contraction}
 \| \wK_u(\Phi,a) - \wK_u (\Upsilon,b) \|_\X \lesssim \delta \| \Phi - \Upsilon \|_\X
\end{align}
holds for $\Phi,\Upsilon \in \Xd$ and $a=a_u(\Phi), b=b_u(\Phi) \in \Ad$ as derived in Proposition~\ref{lem:a}. For $\delta$ sufficiently small \eqref{Ku:contraction} thus is a contraction and the existence and uniqueness of a fixed point $\Phi_u = \wK_u (\Phi_u)$ follows from the contraction mapping principle.

\medskip
{\bf Step 1. $\wK_u$ maps $\Xd$ into itself.}
 Let $\delta,c>0$ as in Proposition~\ref{lem:a}, $\Phi \in \Xd$ and $u$ be an initial value satisfying $\| u \| \leq \frac{\delta}{c}$. Then by Proposition~\ref{lem:a} there exists a unique function $a := a_u (\Phi) \in \Ad$ which satisfies the modulation equation~\eqref{modeq:a}. We prove that $\wK_u(\Phi,a) \in \Xd$ by considering its projections along $\Q_{a_\infty}$, $\P_{a_\infty}$ and $\wP_{a_\infty}$.

 Recall that $\S_{a_\infty}(\tau) \Q_{a_\infty} = \Q_{a_\infty} \S_{a_\infty}(\tau) = \Q_{a_\infty}$ by Proposition~\ref{prop:proj} and that the integral term of $\Q_{a_\infty} \wK_u(\Phi,a)$ projected along $\Q_{a_\infty,j}$, $j \in \{ 1,2,3\}$, is precisely given through equation \eqref{Qmod} using a smooth cut-off function $\chi \colon [0, \infty) \to [0,1]$. Together with the fact that $\C_u(\Phi,a) \in \rg \P_{a_\infty}$ and the transversality of $\P_{a_\infty}$ and $\Q_{a_\infty}$ by Proposition~\ref{prop:proj}, this yields
\begin{align}
 \Q_{a_\infty} \wK_u(\Phi,a)(\tau) = \; & \Q_{a_\infty} [ u - \C_u(\Phi,a) ] \nonumber \\
& + \Q_{a_\infty} \int_0^\tau \left[ \hL_{a(\sigma)} \Phi(\sigma) + \N_{a(\sigma)}(\Phi(\sigma)) - \partial_\sigma \Psi_{a(\sigma)} \right] d\sigma \nonumber \\
= \; & \Q_{a_\infty} u - \Q_{a_\infty} \C_u(\Phi,a) - [1-\chi(\tau)] \Q_{a_\infty} u \nonumber \\
= \; & \chi(\tau) \Q_{a_\infty} u - \Q_{a_\infty} \C_u(\Phi,a) \nonumber \\
= \; & \chi(\tau) \Q_{a_\infty} u. \label{Qsimplified}
\end{align}
 Hence by the initial assumption on $u$, and the fact that $\chi$ is defined such that $\| \chi \| \leq 1 \lesssim e^{-\dbeep \tau}$ on the interval $[0,4]$ and zero outside, we obtain
\[
 \| \Q_{a_\infty} \wK_u(\Phi,a)(\tau) \| \lesssim \frac{\delta}{c} e^{- \dbeep \tau}.
\]
This implies
\[
 \| \Q_{a_\infty} \wK_u(\Phi,a) \|_\X \lesssim \frac{\delta}{c},
\]
and for $c$ sufficiently large thus $\Q_{a_\infty} \wK_u(\Phi,a) \in \mathcal{X}^\varepsilon_{\delta / 3}$.

Since $\P_{a_\infty}^2 = \P_{a_\infty}$ is a projection that commutes with the semigroup and satisfies $\S_{a_\infty}(\tau) \P_{a_\infty} = e^\tau \P_{a_\infty}$ by Proposition~\ref{prop:proj}, and by definition of $\C_u(\Phi,a)$ in \eqref{C}, we have
\begin{align}
 \P_{a_\infty} \wK_u(\Phi,a)(\tau) = \; & e^\tau \P_{a_\infty} \left[ u - \C_u(\Phi,a) \right] \nonumber \\
& + \int_0^\tau e^{\tau-\sigma} \P_{a_\infty} \left[ \hL_{a(\sigma)} \Phi(\sigma) + \N_{a(\sigma)}(\Phi(\sigma)) - \partial_\sigma \Psi_{a(\sigma)} \right] d\sigma \nonumber \\
= \; & - \int_\tau^\infty e^{\tau-\sigma} \P_{a_\infty} \left[ \hL_{a(\sigma)} \Phi(\sigma) + \N_{a(\sigma)}(\Phi(\sigma)) - \partial_\sigma \Psi_{a(\sigma)} \right] d\sigma. \label{Psimplified}
\end{align}
The estimates on the integrand from Lemma~\ref{lem:decay3} thus imply the decay
\begin{align*}
 \| \P_{a_\infty} \wK_u(\Phi,a)(\tau) \| &\lesssim \int_\tau^\infty e^{\tau-\sigma} \delta^2 e^{-\dbeep\sigma} d\sigma \lesssim \delta^2 e^{-\dbeep\tau}.
\end{align*}
By definition of $\X$,
\[
 \| \P_{a_\infty} \wK_u(\Phi,a) \|_\X \lesssim \delta^2
\]
and thus $\P_{a_\infty} \wK_u(\Phi,a) \in \mathcal{X}^\varepsilon_{\delta/3}$ for $\delta$ sufficiently small.

Finally, we consider $\wP_{a_\infty} \wK_u(\Phi,a) = [ \I - \Q_{a_\infty} - \P_{a_\infty}] \wK_u(\Phi,a)$, which is
\begin{align}\label{wPKu}
 \wP_{a_\infty} & \wK_u(\Phi,a)(\tau) \nonumber \\
 = \; & \wP_{a_\infty} \S_{a_\infty} (\tau) [ u - \C_u(\Phi,a) ] \nonumber \\
& +  \int_0^\tau \wP_{a_\infty} \S_{a_\infty} (\tau - \sigma) \left[ \hL_{a(\sigma)} \Phi(\sigma) + \N_{a(\sigma)}(\Phi(\sigma)) - \partial_\sigma \Psi_{a(\sigma)} \right] d\sigma \nonumber \\
= \; & \S_{a_\infty} (\tau) \wP_{a_\infty} u + \int_0^\tau \wP_{a_\infty} \S_{a_\infty} (\tau - \sigma) \left[ \hL_{a(\sigma)} \Phi(\sigma) + \N_{a(\sigma)}(\Phi(\sigma)) - \partial_\sigma \Psi_{a(\sigma)} \right] d\sigma,
\end{align}
 again by transversality of $\wP_{a_\infty}$ and $\P_{a_\infty}$ from Proposition~\ref{prop:proj}. Thus by the growth bounds on $\S_{a_\infty} \wP_{a_\infty}$ obtained in Proposition~\ref{prop:proj} as well as Lemma~\ref{lem:decay3}, this yields
\begin{align*}
 \| \wP_{a_\infty} \wK_u(\Phi,a)(\tau) \| & \lesssim e^{-\beep \tau} \| u \| +  \int_0^\tau e^{-\beep (\tau-\sigma)} \delta^2 e^{- \dbeep \sigma} d\sigma \lesssim e^{-\beep \tau} \left( \frac{\delta}{c} + \delta^2 \right).
\end{align*}
 Hence
\[
 \| \wP_{a_\infty} \wK_u(\Phi,a)(\tau) \|_\X \lesssim \frac{\delta}{c} + \delta^2,
\]
 and $\wP_{a_\infty} \wK_u(\Phi,a) \in \mathcal{X}^\varepsilon_{\delta / 3}$ for $c$ sufficiently large and $\delta$ sufficiently small.

 If we combine these three results, that is, $\Q_{a_\infty} \wK_u(\Phi,a), \P_{a_\infty} \wK_u(\Phi,a), \wP_{a_\infty} \wK_u(\Phi,a) \in \mathcal{X}^\varepsilon_{\delta /3}$, then by the triangle inequality, $\wK_u(\Phi,a) = [\Q_{a_\infty} + \P_{a_\infty} + \wP_{a_\infty}] \wK_u(\Phi,a) \in \Xd$ for $\Phi \in \Xd$.

\medskip
{\bf Step 2. $\wK_u$ is a contraction on $\Xd$.}
 As in the first step of this proof we consider the three projections of $\wK_u$ separately, and combine them at the end. Let $\Phi,\Upsilon \in \Xd$ and let $a,b \in \Ad$ be the unique rapidities associated to $\Phi,\Upsilon$ via Proposition~\ref{lem:a}, respectively.

 From \eqref{Qsimplified} and Lemma~\ref{lem:decay1} it follows that
\begin{align*}
 \| \Q_{a_\infty} \wK_u(\Phi,a)(\tau) - \Q_{b_\infty} \wK_u(\Upsilon,b)(\tau) \| &= | \chi(\tau) | \| \Q_{a_\infty} u - \Q_{b_\infty} u \| \\
& \lesssim \frac{\delta}{c} | a_\infty - b_\infty | \lesssim \frac{\delta}{c} e^{-\beep \tau} \| a- b \|_\A,
\end{align*}
 hence by the Lipschitz regularity of the solution $a=a_u(\Phi), b=b_u(\Phi)$ obtained in Proposition~\ref{lem:a} also
\begin{align}\label{K2}
 \| \Q_{a_\infty} \wK_u(\Phi,a) - \Q_{b_\infty} \wK_u(\Upsilon,b) \|_\X \lesssim \delta \| \Phi -\Upsilon \|_\X.
\end{align}

 The simplification \eqref{Psimplified} together with the Lipschitz estimates on the integrand from Lemma~\ref{lem:decay4} and the Lipschitz regularity of $a$ with respect to $\Phi$ from Proposition~\ref{lem:a} also imply that
\begin{align*}
 \| \P_{a_\infty} \wK_u(\Phi,a)(\tau) &- \P_{b_\infty} \wK_u(\Upsilon,b)(\tau) \| \\
&\leq \int_\tau^\infty e^{\tau-\sigma} \Big\| \P_{a_\infty} \left[ \hL_{a(\sigma)} \Phi(\sigma) + \N_{a(\sigma)}(\Phi(\sigma)) - \partial_\sigma \Psi_{a(\sigma)} \right] \\
& \qquad \qquad \quad - \P_{b_\infty} \left[ \hL_{b(\sigma)} \Upsilon(\sigma) + \N_{b(\sigma)}(\Upsilon(\sigma)) - \partial_\sigma \Psi_{b(\sigma)} \right] \Big\| \, d\sigma \\
& \lesssim \int_\tau^\infty e^{\tau-\sigma} \delta e^{-\dbeep \sigma} \left( \| \Phi - \Upsilon \|_\X + \| a - b \|_\A \right) d\sigma \\
& \lesssim \delta e^{-\dbeep \tau} \| \Phi - \Upsilon \|_\X,
\end{align*}
which, in particular, shows that
\begin{align}\label{K1}
 \| \P_{a_\infty} \wK_u(\Phi,a) - \P_{b_\infty} \wK_u(\Upsilon,b) \|_\X \lesssim \delta \| \Phi -\Upsilon \|_\X.
\end{align}

 To treat the remaining component $\wP_{a_\infty} \wK_u(\Phi,a)$ let
\begin{align}\label{Z}
 \Z{\Phi}{a}(\sigma) := \hL_{a(\sigma)} \Phi(\sigma) + \N_{a(\sigma)}(\Phi(\sigma)) - \partial_\sigma \Psi_{a(\sigma)}.
\end{align}
 It follows from \eqref{wPKu} in Step 1 that
\begin{align*}
 \| \wP_{a_\infty} \wK_u(\Phi,a)(\tau) & - \wP_{b_\infty} \wK_u(\Upsilon,b)(\tau) \| \\
&\leq \| \S_{a_\infty} (\tau) \wP_{a_\infty} u - \S_{b_\infty}(\tau) \wP_{b_\infty} u \| \\
& \quad + \int_0^\tau \left\| \wP_{a_\infty} \S_{a_\infty} (\tau - \sigma) \Z{\Phi}{a}(\sigma)
- \wP_{b_\infty} \S_{b_\infty} (\tau - \sigma) \Z{\Upsilon}{b}(\sigma) \right\| d\sigma.
\end{align*}
Since
\begin{align*}
 \| \wP_{a_\infty} \S_{a_\infty} (\tau - \sigma) & \Z{\Phi}{a}(\sigma) - \wP_{b_\infty} \S_{b_\infty} (\tau - \sigma) \Z{\Upsilon}{b}(\sigma) \| \\
 \leq \; & \| \wP_{a_\infty} \S_{a_\infty} (\tau - \sigma) \left[ \Z{\Phi}{a}(\sigma) - \Z{\Upsilon}{b}(\sigma) \right] \| \\
\; & + \|  \left[ \wP_{a_\infty} \S_{a_\infty} (\tau - \sigma) - \wP_{b_\infty} \S_{b_\infty} (\tau - \sigma) \right] \Z{\Upsilon}{b}(\sigma) \|,
\end{align*}
it follows from Proposition~\ref{prop:proj}, Lemmas~\ref{lem:decay1}, \ref{lem:decay3} and \ref{lem:decay4} that
\begin{align*}
\| \wP_{a_\infty} \wK_u(\Phi,a)(\tau) & - \wP_{b_\infty} \wK_u(\Upsilon,b)(\tau) \| \\
&\lesssim e^{-\beep \tau} \frac{\delta}{c} |a_\infty - b_\infty| + \int_0^\tau e^{-\beep (\tau - \sigma)} \delta e^{-\dbeep \sigma} \left( \| \Phi - \Upsilon \|_\X + \| a - b \|_\A \right)  \\
& \qquad \qquad \qquad \qquad \qquad \qquad + |a_\infty-b_\infty| e^{-\beep (\tau - \sigma)} \delta^2 e^{-\dbeep \sigma} d\sigma \\
&\leq e^{-\beep \tau} \frac{\delta}{c} \|a-b\|_\A + e^{-\beep \tau} \delta \left( \| \Phi - \Upsilon \|_\X + \| a - b \|_\A \right) \\
&\lesssim e^{-\beep \tau} \delta \| \Phi - \Upsilon \|_\X.
\end{align*}
Thus we have that
\begin{align}\label{K3}
 \|  \wP_{a_\infty} \wK_u(\Phi,a) - \wP_{b_\infty} \wK_u(\Upsilon,b) \|_\X \lesssim \delta \| \Phi -\Upsilon\|_\X.
\end{align}

 Hence finally it follows from \eqref{K1}--\eqref{K3} with $\delta$ sufficiently small that
\begin{align*}
 \| \wK_u(\Phi,a) - \wK_u(\Upsilon,b) \|_\X \leq \;& \| \Q_{a_\infty} \wK_u(\Phi,a) - \Q_{b_\infty} \wK_u(\Upsilon,b) \|_\X \\ 
&+ \| \P_{a_\infty} \wK_u(\Phi,a) - \P_{b_\infty} \wK_u(\Upsilon,b) \|_\X \\
&+ \|  \wP_{a_\infty} \wK_u(\Phi,a) - \wP_{b_\infty} \wK_u(\Upsilon,b) \|_\X \\
\leq \; &\tfrac{1}{2} \| \Phi -\Upsilon \|_\X,
\end{align*}
 therefore $\wK_u$ is a contraction.

\medskip
\textbf{Step 3. The solution operator is Lipschitz continuous.}
 Suppose $\Phi = \wK_u(\Phi,a)$ and $\Upsilon=\wK_v(\Upsilon,b)$ for $u,v \in B_{\delta/c} := \{ u \in \H \, | \, \| u \| \leq \frac{\delta}{c} \}$. We already know from Proposition~\ref{lem:a} that
\[
 \| a - b \|_\A \lesssim \| \Phi - \Upsilon \|_\X,
\]
thus it remains to estimate $\Phi-\Upsilon$. 

 Proposition~\ref{prop:proj} and the modulation equation~\eqref{Qmod} imply that
\begin{align*}
 \wK_u(\Phi,a) &= \S_{a_\infty}(\tau) [u-\C_u(\Phi,a)] + \int_0^\tau \S_{a_\infty}(\tau-\sigma) \Z{\Phi}{a}(\sigma) d\sigma \\
&= \Q_{a_\infty} u + e^\tau \P_{a_\infty} u + \S_{a_\infty}(\tau) \wP_{a_\infty} u - e^\tau \P_{a_\infty} u - \P_{a_\infty} \int_0^\infty e^{\tau-\sigma} \Z{\Phi}{a}(\sigma) d\sigma \\
& \quad + \Q_{a_\infty} \int_0^\tau \Z{\Phi}{a}(\sigma)d\sigma + \P_{a_\infty} \int_0^\tau e^{\tau-\sigma} \Z{\Phi}{a}(\sigma) d\sigma \\
& \quad + \wP_{a_\infty} \int_0^\tau \S_{a_\infty} (\tau-\sigma) \Z{\Phi}{a}(\sigma) d\sigma \\
&= \chi(\tau) \Q_{a_\infty} u - \P_{a_\infty} \int_\tau^\infty e^{\tau-\sigma} \Z{\Phi}{a}(\sigma) d\sigma + \S_{a_\infty}(\tau) \wP_{a_\infty} u \\
& \quad + \wP_{a_\infty} \int_0^\tau \S_{a_\infty}(\tau-\sigma) \Z{\Phi}{a}(\sigma) d\sigma. 
\end{align*}
 Thus we have to estimate the terms arising in 
\begin{equation}\label{eq:Kusum}
\begin{aligned}
 \| \wK_u&(\Phi,a) - \wK_v(\Upsilon,b) \| \\
&\leq |\chi(\tau)| \|\Q_{a_\infty} u - \Q_{b_\infty} v \| + \| \S_{a_\infty}(\tau) \wP_{a_\infty} u - \S_{b_\infty}(\tau) \wP_{b_\infty} v \|  \\
&\quad + \left\| \P_{a_\infty} \int_\tau^\infty e^{\tau-\sigma} \Z{\Phi}{a}(\sigma) d\sigma - \P_{b_\infty} \int_\tau^\infty e^{\tau-\sigma} \Z{\Upsilon}{b}(\sigma) d\sigma \right\| \\
&\quad + \left\| \wP_{a_\infty} \int_0^\tau \S_{a_\infty} (\tau-\sigma) \Z{\Phi}{a}(\sigma) d\sigma - \wP_{b_\infty} \int_0^\tau \S_{b_\infty}(\tau-\sigma) \Z{\Upsilon}{b}(\sigma) d\sigma \right\|.
\end{aligned}
\end{equation}
 The estimate of the first terms follows from the definition of $\chi$, Lemma~\ref{lem:decay1} and Proposition~\ref{lem:a},
\begin{align}\label{term1}
 |\chi(\tau)| \| \Q_{a_\infty} u - \Q_{b_\infty} v \| &\lesssim e^{-\beep \tau} \left( \| u - v \| + |a_\infty - b_\infty| \| v \| \right) \nonumber \\
&\leq e^{-\beep \tau} \left( \| u - v \| + \|a-b\|_\A \frac{\delta}{c} \right) \nonumber \\
&\lesssim e^{-\beep \tau} \left( \| u - v \| + \delta \| \Phi-\Upsilon \|_\X \right)
\end{align}
 In a similar fashion we obtain
\begin{align}\label{term2}
 \| \S_{a_\infty}(\tau) \wP_{a_\infty} u - \S_{b_\infty}(\tau) \wP_{b_\infty} v \|
&\leq \| \S_{a_\infty} \wP_{a_\infty} (u-v) \| + \| ( \S_{a_\infty}(\tau) \wP_{a_\infty} - \S_{b_\infty}(\tau) \wP_{b_\infty} ) v \| \nonumber \\
&\lesssim e^{-\beep \tau} \| u- v\| + |a_\infty-b_\infty| e^{-\beep \tau} \|v\| \nonumber \\
&\lesssim e^{-\beep \tau} ( \| u- v\| + \delta \| \Phi-\Upsilon \|_\X ),
\end{align}
 where we additionally used Proposition~\ref{prop:proj} in the first line.
 From the Lipschitz regularity of $\P_{a_\infty}$ from Lemma~\ref{lem:decay1} and the estimates for the integrand terms from Lemmas~\ref{lem:decay3} and \ref{lem:decay4} it follows with Proposition~\ref{lem:a} that the third term in \eqref{eq:Kusum} is
\begin{align}\label{term3}
 \Big\| \P_{a_\infty} & \int_\tau^\infty e^{\tau-\sigma} \Z{\Phi}{a}(\sigma) d\sigma - \P_{b_\infty} \int_\tau^\infty e^{\tau-\sigma} \Z{\Upsilon}{b}(\sigma) d\sigma \Big\| \nonumber \\
&\leq \int_\tau^\infty e^{\tau-\sigma} \Big[ \| \hat \L_{a(\sigma)} \Phi(\sigma) - \hat \L_{b(\sigma)} \Upsilon(\sigma) \| + |a_\infty-b_\infty| \| \hat \L_{b(\sigma)} \Upsilon(\sigma) \| \nonumber \\
& \qquad\qquad\qquad + \| \N_{a(\sigma)}(\Phi(\sigma)) - \N_{b(\sigma)} (\Upsilon(\sigma)) \| + |a_\infty-b_\infty| \| \N_{b(\sigma)}(\Upsilon(\sigma)) \| \nonumber \\
& \qquad\qquad\qquad + \| \P_{a_\infty} \partial_\sigma \Psi_{a(\sigma)} - \P_{b_\infty} \partial_\sigma \Psi_{b(\sigma)} \| \Big] d\sigma \nonumber \\
&\lesssim \int_\tau^\infty e^{\tau-\sigma} \left[ \delta e^{-\dbeep \sigma} (\|\Phi-\Upsilon\|_\X + \|a-b\|_\A) + \|a-b\|_\A \delta^2 e^{-\dbeep\sigma} \right] d\sigma \nonumber \\
&\lesssim \delta e^\tau \|  \Phi -\Upsilon \|_\X \int_\tau^\infty e^{-\sigma} e^{-\dbeep \sigma} d\sigma \nonumber \\
&\lesssim \delta e^{-\dbeep \tau} \| \Phi - \Upsilon \|_\X.
\end{align}
 Finally, the last term in \eqref{eq:Kusum} can be controlled by the semigroup estimates of Proposition~\ref{prop:proj} and the Lipschitz regularity in Lemmas~\ref{lem:decay1}, \ref{lem:decay3} and \ref{lem:decay4}. More precisely,
\begin{align}\label{term4}
 \Big\| \wP_{a_\infty} & \int_0^\tau \S_{a_\infty} (\tau-\sigma) \Z{\Phi}{a}(\sigma) d\sigma - \wP_{b_\infty} \int_0^\tau \S_{b_\infty}(\tau-\sigma) \Z{\Upsilon}{b}(\sigma) d\sigma \Big\| \nonumber \\
&\leq \int_0^\tau \Big\| \S_{a_\infty}(\tau-\sigma) \wP_{a_\infty} \left[ \hat\L_{a(\sigma)}\Phi(\sigma) - \hat\L_{b(\sigma)}\Upsilon(\sigma) + \N_{a(\sigma)}(\Phi(\sigma)) - \N_{b(\sigma)}(\Upsilon(\sigma)) \right] \Big\| \nonumber \\
&\qquad\quad +  \| \S_{a_\infty} (\tau-\sigma) \wP_{a_\infty} - \S_{b_\infty} (\tau-\sigma) \wP_{b_\infty} \| \| \hat \L_{b(\sigma)}\Upsilon(\sigma) + \N_{b(\sigma)}(\Upsilon(\sigma)) \| \nonumber \\
&\qquad\quad + \| \S_{a_\infty} (\tau-\sigma) \wP_{a_\infty} \partial_\sigma \Psi_{a(\sigma)} - \S_{b_\infty} (\tau-\sigma) \wP_{b_\infty} \partial_\sigma \Psi_{b(\sigma)} \| d\sigma \nonumber \\
&\lesssim \int_0^\tau e^{-\beep (\tau-\sigma)} \delta e^{-\dbeep \sigma} \| \Phi - \Upsilon \|_\X + e^{-\beep (\tau-\sigma)} \|\Phi-\Upsilon\|_\X \delta^2 e^{-\dbeep \sigma} \nonumber \\
&\quad\quad + e^{-\beep (\tau-\sigma)} \delta e^{-\dbeep \sigma} \| \Phi-\Upsilon\|_\X d\sigma \nonumber \\
&\lesssim \delta e^{-\beep \tau} \| \Phi-\Upsilon \|_\X,
\end{align}
 where we have used 
\begin{align*}
 \| \S_{a_\infty} (\tau-\sigma) & \wP_{a_\infty} \partial_\sigma \Psi_{a(\sigma)} - \S_{b_\infty} (\tau-\sigma) \wP_{b_\infty} \partial_\sigma \Psi_{b(\sigma)} \| \\
&\leq \| \S_{a_\infty}(\tau-\sigma) \wP_{a_\infty} - \S_{b_\infty} (\tau-\sigma) \wP_{b_\infty} \| \| \wP_{a_\infty} \partial_\sigma \Psi_{a(\sigma)} \| \\
&\quad + \| \S_{b_\infty}(\tau-\sigma) \wP_{b_\infty} \| \| \wP_{a_\infty} \partial_\sigma \Psi_{a(\sigma)} - \wP_{b_\infty} \partial_\sigma \Psi_{b(\sigma)} \| \\
&\lesssim |a_\infty - b_\infty| e^{-\beep(\tau-\sigma)} \delta^2 e^{-\dbeep \sigma} + e^{-\beep (\tau-\sigma)} \delta e^{-\dbeep \sigma} \|a-b\|_\A \\
&\lesssim e^{-\beep \tau} e^{-\beep \sigma} \delta \| \Phi-\Upsilon\|_\X.
\end{align*}
 Together \eqref{eq:Kusum}--\eqref{term4} yield
\begin{align*}
 \| \Phi - \Upsilon \|_\X
&= \sup_{\tau > 0} \left( e^{\beep\tau} \| \Phi(\tau) - \Upsilon(\tau) \| \right) \lesssim \delta \| \Phi - \Upsilon \|_\X + \| u - v \|,
\end{align*}
so that we obtain the desired Lipschitz regularity for $\delta$ sufficiently small, i.e.,
\[
 \| a - b \|_\A + \| \Phi - \Upsilon \|_\X \lesssim \| u - v \|. \qedhere
\]
\end{proof}

\subsection{The subspace of stable and center initial data}
\label{ssec4.4}

 In Proposition~\ref{prop:modsol} of the above Section~\ref{ssec4.3} we solved the modified equation $\Phi(\tau) = \wK_u(\Phi,a)(\tau)$ by isolating the instability related to time translations. Using this existence and uniqueness result we are now in a position to identify stable initial data $u$ that lead to unique global solutions $\Phi_u$ of the \emph{original\/} problem $\Phi(\tau) = \K_u(\Phi,a)(\tau)$ stated explicitly in equation \eqref{intmodeq}. As a result, we obtain the set of initial data near the selfsimilar solution $\psi_0 \equiv \sqrt{2}$ suitable for Theorem~\ref{maintheorem}, that is, a stable codimension-$1$ Lipschitz manifold of hyperboloidal initial data leading to unique global solutions of the cubic wave equation \eqref{cwe} with nondispersive decay.

\medskip
 To see how the modification \eqref{Peq} of the original equation is related to a modification of the initial data, we first compare the modification of the right hand side $\K_u(\Phi,a)$ of the original equation \eqref{intmodeq} to the right hand side $\wK_u(\Phi,a)$ of the the modified equation \eqref{Peq}. This difference is made up of the correction term defined in \eqref{C}, i.e.,
\[
 \wK_u(\Phi,a) - \K_u(\Phi,a) = \S_{a_\infty}(\tau) \C_u(\Phi,a).
\]
 Suppose $\Phi_u,a_u$ denotes the unique solution of \eqref{Peq} corresponding to the initial value $\|u\| \leq \frac{\delta}{c}$ as shown in Proposition~\ref{prop:modsol}. By introducing the function
\begin{align}\label{F}
 \F(u) := \P_{a_{u,\infty}} \int_0^\infty e^{-\sigma} \left[ \hL_{a_u(\sigma)} \Phi_u(\sigma) + \N_{a_u(\sigma)}(\Phi_u(\sigma)) - \partial_\sigma \Psi_{a_u(\sigma)} \right] d\sigma,
\end{align}
we can write the correction term in the form
\[
 \C(u) := \C_u(\Phi_u,a_u) = \P_{a_{u,\infty}} u + \F(u).
\]
 The correction term is defined by via the projection $\P_{a_\infty}$ onto the unstable subspace $\rg \P_{a_\infty}$, thus the modification in the equation \eqref{Peq} should be viewed as adding a term from the image of a rank-1 projection to the initial data. More precisely, we obtain the following manifold.

\begin{theorem}\label{thm:manifold}
 Fix $\varepsilon \in (0,\frac{1}{2})$. Let $\delta >0$ be sufficiently small and $\tilde c > c>0$ sufficiently large. There exists a codimension-1 Lipschitz manifold $\M \subseteq \H$ with $0 \in \M$, defined as the graph of a Lipschitz function $\T \colon \ker \P_0 \to \rg \P_0$, that is,
\begin{align*}
 \M := \; & \{ v + \T(v) \, | \, v \in \ker \P_0, \| v \| \leq \tfrac{\delta}{\tilde c} \} \\
\subseteq \; & \{ u \, | \, \C(u) = 0, \| u \| \leq \tfrac{\delta}{c} \} \subseteq \ker \P_0 \oplus \rg \P_0 = \H,
\end{align*}
 such that for any $u \in \M$ there exists a unique solution $\Phi_u \in \Xd$ and $a_u \in \Ad$ to equation \eqref{intmodeq}, that is to
\[
 \Phi(\tau) =  \S_{a_\infty}(\tau) u
 - \int_0^\tau \S_{a_\infty}(\tau-\sigma) \left[ \hL_{a(\sigma)} \Phi(\sigma) + \N_{a(\sigma)}(\Phi(\sigma)) - \partial_\sigma \Psi_{a(\sigma)} \right] d\sigma, 
\]
 with initial value $\Phi_u (0) = u$. Moreover, the sum
\[ \Psi_u (\tau) :=  \Psi_{a_u(\tau)} + \Phi_u (\tau), \qquad \Psi_u \in \cC ([0,\infty),\H) \]
 with initial value $\Psi_u(0) = \Psi_0 + u$ is a solution in the Duhamel sense of the abstract evolution equation 
\begin{align}\label{evoeq1}
 \partial_\tau \Psi(\tau) = \L \Psi(\tau) + \N (\Psi(\tau)), \quad \tau \geq 0,
\end{align}
 formulated in Section~\ref{sec2}.
\end{theorem}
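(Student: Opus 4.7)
The plan is to exploit the difference $\K_u(\Phi,a)(\tau) - \wK_u(\Phi,a)(\tau) = \S_{a_\infty}(\tau)\C_u(\Phi,a)$ between the original weak equation \eqref{intmodeq} and the modified equation \eqref{Peq}. For any initial datum $u$ with $\|u\| \leq \delta/c$, the unique pair $(\Phi_u, a_u) \in \Xd \times \Ad$ furnished by Proposition~\ref{prop:modsol} therefore solves the original equation \eqref{intmodeq} precisely when $\C(u) := \C_u(\Phi_u, a_u) = 0$. The task thus reduces to exhibiting a codimension-1 Lipschitz manifold $\M$ contained in the zero set $\{u \in \H : \C(u) = 0\}$, and then reading off the evolution equation \eqref{evoeq1} for $\Psi_u = \Psi_{a_u(\cdot)} + \Phi_u$.

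To describe this zero set, I exploit that by construction $\C(u) = \P_{a_{u,\infty}} u + \F(u)$ lies in the one-dimensional space $\rg \P_{a_{u,\infty}}$. Lemma~\ref{lem:decay1} together with Remark~\ref{rem:a} gives $\|\P_{a_{u,\infty}} - \P_0\| \lesssim |a_{u,\infty}| \lesssim \delta$, so $\P_0$ restricted to $\rg \P_{a_{u,\infty}}$ is an isomorphism onto $\rg \P_0$, and hence $\C(u) = 0$ if and only if $\P_0 \C(u) = 0$. I would then decompose $\H = \ker \P_0 \oplus \rg \P_0$, write $u = v + w$ with $v \in \ker \P_0$ and $w \in \rg \P_0$, and recast the scalar condition as the fixed-point problem
\begin{align*}
w = \Theta_v(w) := \bigl(I|_{\rg \P_0} - \P_0 \P_{a_{u,\infty}}\bigr) w - \P_0 \P_{a_{u,\infty}} v - \P_0 \F(v+w),
\end{align*}
in which $a_{u,\infty}$ is determined implicitly by $u = v+w$ through Proposition~\ref{prop:modsol}.

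To run the contraction I use that all three ingredients defining $\Theta_v$ have Lipschitz constants of size $\lesssim \delta$: Lemma~\ref{lem:decay1} yields $\|I|_{\rg\P_0} - \P_0 \P_{a_{u,\infty}}\| = \|\P_0(\P_0 - \P_{a_{u,\infty}})\| \lesssim \delta$; Lemmas~\ref{lem:decay3}--\ref{lem:decay4} combined with the Lipschitz regularity of $u \mapsto (\Phi_u, a_u)$ from Proposition~\ref{prop:modsol} force $u \mapsto \F(u)$ to be Lipschitz with constant $\lesssim \delta$ and to satisfy $\F(0) = 0$; and $u \mapsto \P_{a_{u,\infty}}$ is Lipschitz of size $\lesssim \delta$ via Lemma~\ref{lem:decay1} and Proposition~\ref{lem:a}. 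For $\tilde c > c$ sufficiently large and $\delta$ sufficiently small, $\Theta_v$ then maps the closed ball $\{w \in \rg \P_0 : \|w\| \leq \delta/\tilde c\}$ into itself and is a strict contraction there, so the resulting unique fixed point $w = \T(v)$ depends Lipschitz continuously on $v \in \ker \P_0$ with $\T(0) = 0$, the latter because $\Phi_0 = 0$ and $a_0 = 0$ force $\F(0) = 0$ and hence $\C(0) = 0$. Setting $\M := \{v + \T(v) : v \in \ker \P_0,\; \|v\| \leq \delta/\tilde c\}$ produces the desired codimension-1 Lipschitz manifold contained in $\{\C(u) = 0\}$.

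Finally, for $u \in \M$ the function $\Psi_u := \Psi_{a_u(\cdot)} + \Phi_u$ belongs to $\cC([0,\infty),\H)$, since $a_u \in \cC^1$ and $a \mapsto \Psi_a$ is smooth, and $\Psi_u(0) = \Psi_{a_u(0)} + u = \Psi_0 + u$ thanks to $a_u(0) = 0$. Because $\C(u) = 0$, the pair $(\Phi_u, a_u)$ solves \eqref{intmodeq}; unwinding the modulation ansatz of Section~\ref{ssec:mod} turns this identity into the Duhamel form of \eqref{evoeq1}. The main obstacle I anticipate is the implicit coupling $a_{u,\infty} = a_{v+w,\infty}$ inside $\Theta_v$, which prevents a direct application of a classical implicit function theorem: even the leading linear operator $I|_{\rg\P_0} - \P_0 \P_{a_{u,\infty}}$ depends on $w$. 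This is overcome precisely by the smallness $|a_{u,\infty}| \lesssim \delta$ and the Lipschitz dependence of $a_u$ on $u$ furnished by Proposition~\ref{prop:modsol}, which turn the fixed-point map into a genuine contraction at small scales.
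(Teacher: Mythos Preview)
Your proposal is correct and follows essentially the same route as the paper's proof. Your fixed-point map $\Theta_v$ is algebraically identical to the paper's $\Gamma_v(w):=\P_0(\P_0-\P_{a_{v+w,\infty}})(v+w)-\P_0\F(v+w)$ once one expands using $\P_0 v=0$ and $\P_0 w=w$, and the contraction argument, the Lipschitz dependence of $\T$, the verification $\T(0)=0$, and the final unwinding of the modulation ansatz all mirror the paper's Steps~1--5; the only cosmetic difference is that the paper runs the contraction on the $v$-dependent ball $\{w\in\rg\P_0:\|v+w\|\le\delta/c\}$ rather than your fixed ball in $\rg\P_0$.
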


\begin{proof}
 \textbf{Step 1. $\C(u) = 0$ is equivalent to $\P_0 \C(u) = 0$.}
 One direction is just the restriction to $\rg \P_0$ and obviously satisfied. On the other hand, assume that $\P_0 \C(u) = 0$ and recall that $\C(u) \in \rg \P_{a_{u,\infty}}$ by definition. Lemma~\ref{lem:decay1} implies
\begin{align*}
 \| \C(u) \| &= \| \P_0 \C(u) +  \left( \P_{a_{u,\infty}} - \P_0 \right) \C(u) \| \\
&\leq \| \P_{a_{u,\infty}} - \P_0 \| \| \C(u) \| \lesssim | a_{u,\infty} | \| \C(u) \|.
\end{align*}
 By Remark~\ref{rem:a}, $| a_{u,\infty} | \lesssim \delta$ can be made arbitrarily small (for $\delta$ sufficiently small), which shows that $\C(u) = 0$.

\medskip
 \textbf{Step 2. $\C(u)$ as graph of $\T$ on $\ker \P_0$.}
 Since $\P_0$ is a projection, we can write $\H$ as a direct sum $\H = \ker \P_0 \oplus \rg \P_0$. Given that $u = v + w \in \ker \P_0 \oplus \rg \P_0$ should satisfy $\C(u)=0$, we define for every $v \in \ker \P_0$ the map
\begin{align*}
 \wC_v \colon \rg \P_0 &\to \rg \P_0 \\
  w &\mapsto \wC_v (w) := \P_0 \C(v+w).
\end{align*}
 and want to find a unique $w \in \rg \P_0$ such that $\wC_v(w) = 0$. This is equivalent to the inverse (if it exists) satisfying $\wC_v^{-1}(0) = w$. The invertibility is established in the next step, so that we can define
\begin{align*}
 \T \colon \ker \P_0 &\to \rg \P_0 \\
v &\mapsto \T(v) := \wC_v^{-1}(0).
\end{align*}

\medskip
 \textbf{Step 3. $\wC_v$ is invertible for small $v \in \ker \P_0$.}
 To be able to apply the Banach fixed point theorem, we rewrite the equation $\wC_v(w) = 0$ in a suitable way as an implicit equation $\Gamma_v(w) = w$. To this end note that for $v + w \in \ker \P_0 \oplus \rg \P_0$ we generally have
\begin{align*}
 \wC_v(w) = \; & \P_0 \C(v+w) \\
= \; & \P_0 \P_{a_{v+w,\infty}} (v+w) + \P_0 \F(v+w) \\
= \; & \P_0 \P_{a_{v+w,\infty}} w + \P_0 \P_{a_{v+w,\infty}} v + \P_0 \F(v+w) \\
= \; & w + \P_0 (\P_{a_{v+w,\infty}} - \P_0) (v+w) + \P_0 \F(v+w). 
\end{align*}
 Equivalent to showing that $\wC_v(w) = 0$ for a unique $w \in \rg\P_0$ is to solve the implicit equation
\[
 w = \Gamma_v(w) := \P_0 (\P_0 - \P_{a_{v+w,\infty}}) (v+w) - \P_0 \F(v+w).
\]
 We prove that $\Gamma_v$ is a contraction for every $v \in \ker \P_0$ that is sufficiently small.

 First we observe that with $\Gamma_v$, for $\| v \| \leq \frac{\delta}{2c}$, we land in the same subset 
\[ \widetilde B_{\delta/c}(v) := \{ w \in \rg \P_0 \, | \, \| v+w \| \leq \tfrac{\delta}{c} \}. \]
 For $u = v+w$ with $\| u \| \leq \frac{\delta}{c}$ there exist unique $\Phi_{v+w} \in \Xd,$ $a_{v+w} \in \Ad$ that solve equation~\eqref{Peq} by Proposition~\ref{prop:modsol}. Thus it follows from Lemmas~\ref{lem:decay1} and \ref{lem:decay3} that
\begin{align*}
 \| \Gamma_v(w) \| &\leq \| \P_{a_{v+w,\infty}} - \P_0 \| \| v + w \| + \| \F(v+w) \| \\
&\lesssim | a_{v+w,\infty} | \frac{\delta}{c} + \int_0^\infty e^{-\sigma} \delta^2 e^{-\dbeep\sigma} d\sigma \\
&\lesssim \frac{\delta}{\beep} \frac{\delta}{c} + \delta^2
\end{align*}
 where the last inequality follows again from the fact that $|a_{v+w,\infty}| \leq \frac{\delta}{\eep}$ by the proof of Lemma~\ref{lem:decay3}. Thus for sufficiently small $\delta$ and $\| v \| \leq \frac{\delta}{2c}$ we observe that $\| v + \Gamma_v(w)\| \leq \frac{\delta}{c}$, hence $\Gamma_v(w) \in \widetilde B_{\delta/c}(v)$.

 It remains to be shown that $\Gamma_v$ is a contraction for every $v \in \ker \P_0$ with $\| v \| \leq \frac{\delta}{2c}$. For $w,\widetilde w \in \widetilde B_{\delta/c}(v)$ we obtain by Proposition~\ref{prop:modsol} corresponding solutions $(\Phi,a)$ and $(\Upsilon,b)$ in $\Xd \times \Ad$ to the modified equation \eqref{Peq} for $v+w$ and $v+\widetilde w$, respectively. This yields
\begin{align}\label{eq:G1}
 \| \Gamma_v(w) - \Gamma_v(\widetilde{w}) \|
\leq \; & \| \P_0 (\P_0 - \P_{a_\infty})(v+w) - \P_0 (\P_0 - \P_{b_\infty} ) (v+\widetilde w) \| \nonumber\\
& + \| \P_0 \F(v+w) - \P_0 \F(v+\widetilde w) \|.
\end{align}
 Since $v \in \ker \P_0$ the first term can be simplified, i.e.,
\begin{align*}
 \P_0 (\P_0 - \P_{a_\infty})(v+w) &- \P_0 (\P_0 - \P_{b_\infty} ) (v+\widetilde w) \\
&= \P_0 \P_0 (w-\widetilde w) - \P_0 \left[ \P_{a_\infty} (v+w) - \P_{b_\infty} (v+\widetilde w) \right] \\
&= \P_0 \P_0 (w-\widetilde w) - \P_0 (\P_{a_\infty} - \P_{b_\infty}) (v+\widetilde w) - \P_0 \P_{a_\infty} (w-\widetilde w) \\
&= \P_0 (\P_0 - \P_{a_\infty}) (w-\widetilde w) - \P_0 (\P_{a_\infty} - \P_{b_\infty}) (v+\widetilde w),
\end{align*}
 and estimated using Lemma~\ref{lem:decay1}, Remark~\ref{rem:a} and the Lipschitz continuous dependence of the solutions, such that
\begin{align}\label{eq:G2}
 \|  \P_0 (\P_0 - \P_{a_\infty})(v+w) &- \P_0 (\P_0 - \P_{b_\infty} ) (v+\widetilde w) \| \nonumber \\
&\leq \| \P_0 (\P_0 - \P_{a_\infty}) (w-\widetilde w) \| + \| \P_0 (\P_{a_\infty} - \P_{b_\infty}) (v+\widetilde w) \| \nonumber\\
&\lesssim |a_\infty| \| w-\widetilde w \| + |a_\infty - b_\infty| \| v+\widetilde w\| \nonumber\\
&\lesssim \delta \| w- \widetilde w \| + \tfrac{\delta}{c} \| a - b \|_\A \nonumber\\
&\lesssim \delta \| w-\widetilde w\|.
\end{align}
 The estimate for the second term in \eqref{eq:G1} follows directly from Lemma~\ref{lem:decay4}, the Lipschitz bounds obtained in Lemmas~\ref{lem:decay1}, \ref{lem:decay3} and \ref{lem:decay4}, together with the Lipschitz bound of the solution from Proposition~\ref{prop:modsol}, i.e.,
\begin{align}\label{eq:G3}
 \| \P_0 \F (v+w) - \P_0 \F (v+\widetilde w) \|
&\lesssim \int_0^\infty e^{-\sigma} \Big[ \| \hat \L_{a(\sigma)} \Phi(\sigma) - \hat \L_{b(\sigma)} \Upsilon(\sigma) \| \nonumber \\
&\qquad\qquad\quad + \| \N_{a(\sigma)}(\Phi(\sigma)) - \N_{b(\sigma)}(\Upsilon(\sigma)) \| \nonumber \\
&\qquad\qquad\quad + |a_\infty-b_\infty| (\|\hat \L_{b(\sigma)} \Upsilon(\sigma)\| + \| \N_{b(\sigma)}(\Upsilon(\sigma)) \|) \nonumber \\
&\qquad\qquad\quad + \| \P_{a_\infty} \partial_\sigma \Psi_{a(\sigma)} - \P_{b_\infty} \partial_\sigma \Psi_{b(\sigma)} \| \Big] d\sigma \nonumber \\
&\lesssim \int_0^\infty e^{-\sigma} \delta e^{-\dbeep \sigma} \| w-\widetilde w\| d\sigma \nonumber \\
&\lesssim \delta \| w-\widetilde w\|.
\end{align}
 Together \eqref{eq:G1}--\eqref{eq:G3} yield
\[
 \| \Gamma_v(w) - \Gamma_v (\widetilde w) \| \lesssim \delta \| w - \widetilde w \|,
\]
 hence $\Gamma_v$ is a contraction for $\delta$ sufficiently small. Note that the constants contained in $\lesssim$ do \emph{not\/} depend on the specific $u = v+w$, since the bounds come from general results. In particular, $\delta>0$ can be chosen sufficiently small \emph{independent\/} of and therefore uniformly in $v$. Thus for each $v \in \ker \P_0$ with $\| v \| \leq \frac{\delta}{2c}$ the Banach fixed point theorem implies the existence and uniqueness of $w_v \in B_{\delta/c}(v) \subseteq \rg \P_0$ that satisfies
\begin{align}\label{eq:Cuzero}
 \C(v+w) = \wC_v(w) = 0.
\end{align}

\medskip
 \textbf{Step 4. $\T$ is Lipschitz continuous.}
 In order to prove Lipschitz regularity of the graph function $\T$ recall that it has been defined as the inverse function of $\wC$ at $0$, which has been rewritten as an implicit function with right hand side $\Gamma$. By the triangle inequality and for sufficiently small $\delta$ we know from Step 3 that for $v, \widetilde v \in \ker \P_0$ with $\|v\|,\|\widetilde v\| \leq \frac{\delta}{2c}$, the unique solutions $w \in \widetilde B_{\delta/c}(v), \widetilde w \in \widetilde B_{\delta/c}(\widetilde v) \subseteq \rg \P_0$ to equation~\eqref{eq:Cuzero}, respectively, satisfy
\begin{align}
 \| \T(v) - \T(\widetilde v) \| &= \| \wC_v^{-1} (0) - \wC_{\widetilde v}^{-1} (0) \| = \| w - \widetilde w \| \label{T1} \\
&= \| \Gamma_v(w) - \Gamma_{\widetilde v} (\widetilde w) \| \nonumber \\
&\leq \| \Gamma_v(w) - \Gamma_v(\widetilde w) \| + \| \Gamma_v(\widetilde w)  - \Gamma_{\widetilde v} (\widetilde w)\| \nonumber \\
&\leq \tfrac{1}{2} \| w - \widetilde w \| + \| \Gamma_v(\widetilde w)  - \Gamma_{\widetilde v} (\widetilde w)\| \label{T2}
\end{align}
 The second term in the last inequality is thus crucial and reads
\begin{align*}
 \Gamma_v(\widetilde w) - \Gamma_{\widetilde v}(\widetilde w)
&= \P_0 (\P_0 - \P_{a_{v+\widetilde w,\infty}})(v+\widetilde w) - \P_0 (\P_0 - \P_{a_{\widetilde v+\widetilde w,\infty}})(\widetilde v+\widetilde w) \\ & \quad - \P_0 \F(v+\widetilde w) + \P_0 \F(\widetilde v + \widetilde w)\\
&= - \P_0 \P_{a_{v+\widetilde w}} (v +\widetilde w) + \P_0 \P_{a_{\widetilde v+\widetilde w}} (\widetilde v +\widetilde w) - \P_0 \F(v+\widetilde w) + \P_0 \F(\widetilde v + \widetilde w) \\
&= \P_0 (\P_{a_{\widetilde v + \widetilde w,\infty}} - \P_{a_{v+\widetilde w,\infty}}) \widetilde w - \P_0 (\P_{a_{v+\widetilde w,\infty}} v - \P_{a_{\widetilde v + \widetilde w,\infty}} \widetilde v) \\ & \quad- \P_0 \F(v+\widetilde w) + \P_0 \F(\widetilde v + \widetilde w).
\end{align*}
 The norm of the difference $\P_0 (\P_{a_{\widetilde v + \widetilde w,\infty}} - \P_{a_{v+\widetilde w,\infty}}) \widetilde w - \P_0 (\P_{a_{v+\widetilde w,\infty}} v - \P_{a_{\widetilde v + \widetilde w,\infty}} \widetilde v)$ can be estimated using Lemma~\ref{lem:decay1} and the Lipschitz regularity of the solution as shown in Proposition~\ref{prop:modsol}, which yields
\begin{align*}
\| \P_{a_{\widetilde v + \widetilde w,\infty}} &- \P_{a_{v+\widetilde w,\infty}} \| \| \widetilde w \|
 + \| \P_{a_{v + \widetilde w,\infty}} v - \P_{a_{v+\widetilde w,\infty}} \widetilde v \| + \| \P_{a_{v+\widetilde w,\infty}} \widetilde v - \P_{a_{\widetilde v + \widetilde w,\infty}} \widetilde v \| \nonumber \\
&\lesssim | a_{\widetilde v + \widetilde w,\infty} - a_{v+\widetilde w,\infty} | \| \widetilde w \| + \| v - \widetilde v \| + |a_{v+\widetilde w,\infty} - a_{\widetilde v + \widetilde w,\infty} | \| \widetilde v \| \nonumber \\
&\lesssim \frac{\delta}{c} \| v - \widetilde v \| + \| v - \widetilde v \| + \frac{\delta}{2c} \| v - \widetilde v \| \nonumber \\
&\lesssim \| v - \widetilde v \|. \nonumber
\end{align*}
 The remaining terms have already been estimated in \eqref{eq:G3} of Step 3, i.e.,
\[
 \| \P_0 \F(v+\widetilde w) - \P_0 \F(\widetilde v + \widetilde w) \| \lesssim \delta \| v - \widetilde v \|,
\]
 so that
\begin{align}\label{T3}
 \| \Gamma_v(\widetilde w) &- \Gamma_{\widetilde v}(\widetilde w) \| \lesssim \| v - \widetilde v \|.
\end{align}
 Together, \eqref{T1}--\eqref{T3} imply the desired Lipschitz continuity of $\T$, i.e.,
\[
 \| \T(v) - \T(\widetilde v) \| = \| w -\widetilde w \| \leq 2 \| \Gamma_v(\widetilde w) - \Gamma_{\widetilde v}(\widetilde w) \|  \lesssim \| v - \widetilde v \|.
\]
 This proves that $\M$ is a Lipschitz manifold of codimension-1 that consists of initial data $u$ for which unique global solutions $(\Phi_u,a_u) \in \Xd \times \Ad$ to the original integral equation~\eqref{intmodeq} exist.

 Note that
\[
 \wC_0(0) = \P_0 \C(0) = \F(0) = 0,
\]
 since $\Phi_u = 0$ and $a_u = 0$ for $u=0$ by Proposition~\ref{prop:modsol}.
 In particular, since $w \in \rg \P_0$ with $\wC_0(w)=0$ is unique by the above,
\[
 \T(0)=0.
\]
 Thus $\| v + \T(v) \| \leq \frac{\delta}{c}$ is small for $\| v \|$ small. The assumption that $\T(v) \in \widetilde B_{\delta/c}(v)$ is therefore not necessary if we assume that $\|v\|\leq \frac{\delta}{\tilde c}$ for a sufficiently large(r) constant $\tilde c$ than $2c$.

\medskip
\textbf{Step 5. Solution to the evolution equation \eqref{evoeq1}.}
The final statement on $\Psi$ follows directly from the modulation ansatz of Section~\ref{ssec:mod} and the fact that $a(0) = 0$ has been assumed in the Definition~\ref{def:XA} of $\A$.
\end{proof}

\subsection{Evolution for initial data from codimension-1 manifold: Proof of Theorem~\ref{maintheorem}}
\label{ssec4.5}

We conclude by showing the decay for solutions to the cubic wave equation with initial data in the codimension-1 manifold $\M$ defined in Theorem~\ref{thm:manifold} is as stated in Theorem~\ref{maintheorem}.

\begin{proof}[Proof of Theorem~\ref{maintheorem}]
 In Sections~\ref{sec1} and \ref{sec2} the focusing cubic wave equation~\eqref{cwe} in $v$ has been rewritten as the equivalent evolution equation~\eqref{evoeq1} in $\Psi$. Thus Theorem~\ref{thm:manifold} implies that hyperboloidal initial data from a codimension-1 manifold $\M$ of $\H$ lead to unique global solutions $v \in \cC([0,\infty),\H)$ of \eqref{cwe} for given initial data $(f,g) \in \M$. Moreover, to each solution a unique $a = a_{(f,g)} \in \Ad$ with limit $a_\infty = \lim_{t\to\infty}a(\tau) \in \RR^3$ is associated. The remaining part of the proof is concerned with estimating the difference $v-v_{a_\infty}$, where $v_{a_\infty}$ is the Lorentz transformed selfsimilar solution $v_0(t,x) = \sqrt{2}/t$ with rapidity $a_\infty \in \RR^3$, as defined in \eqref{va} of Section~\ref{ssec:lorentz}. (Note that $a$ in the statement of the theorem is the limit $a_\infty \in \RR^3$ and not the function $a \in \Ad$ used in the proof below.)

\medskip
\textbf{Step 1. $v-v_{a_\infty}$ in energy norm.}
 Recall that the Hilbert space $\H$ is equivalent to $H^1(\BB) \times L^2 (\BB)$ as a Banach space \cite{DZ}[Prop.\ 4.1], which is why it suffices to control the norms $\| v - v_{a_\infty} \|_{H^1(\Sigma_T)}$ and $\| \nabla_n v - \nabla_n v_{a_\infty} \|_{L^2(\Sigma_T)}$, introduced in Definition~\ref{def:norms}, as $T \to 0$. In order to proceed we have to recall the modulation ansatz and the coordinate transforms. The $H^1$-norm of $v-v_{a_\infty}$ is obtained by integrating
\begin{align}\label{M1}
 u(T,X)-u_{a_\infty}(T,X) = \frac{v \circ \kappa_T(X) - v_{a_\infty} \circ \kappa_T(X)}{T^2 - |X|^2}
\end{align}
 and its spatial derivative on the hyperboloidal slice $\Sigma_T$ ($u$ solves the same cubic wave equation by Lemma~\ref{lem:transform1}). In Lemma~\ref{lem:transform2} and \eqref{cwe-Psi} this problem was translated to an equation for $\psi$, where
\begin{align}\label{M2}
 u(T,X) = \frac{1}{-T} \psi\left(-\log(-T),\frac{X}{-T}\right),
\end{align}
 and subsequently transformed to a first-order system by the modulation ansatz in Section~\ref{ssec:mod} and solved in Theorem~\ref{thm:manifold} above. The first component of the perturbation term $\Phi (\tau) = \Psi(\tau) - \Psi_{a(\tau)}$ reflects the difference of the solutions $\psi$ to a Lorentz transform $\psi_a$ of the fundamental selfsimilar solution $\psi_0(\xi) = \sqrt{2}$, i.e.,
\begin{align}\label{diff1}
 (\psi-\psi_{a(-\log(-T))})\left(-\log(-T),\frac{X}{-T}\right) = \phi_1\left(-\log(-T),\frac{X}{-T}\right).
\end{align}
 By Theorem~\ref{thm:manifold} the solution to an initial value from $\M$, for $\varepsilon \in (0,\frac{1}{2})$, satisfies $\Phi \in \mathcal{X}^\varepsilon_{\tilde \delta}$, $a \in \mathcal{A}^\varepsilon_{\tilde \delta}$ (for sufficiently small $\tilde \delta$). Thus by Definition~\ref{def:norms}, Definition~\ref{def:XA} of the spaces $\X \subseteq \H$ and $\A$ and Remark~\ref{rem:a} we obtain
\begin{align}\label{H1:2}
 |T|^{-1} \| v - v_{a_\infty} \|_{L^2(\Sigma_T)} &= |T|^{-2} \left\| (\psi-\psi_{a_\infty})\left(-\log(-T), \frac{.}{|T|}\right) \right\|_{L^2(B_{|T|})} \nonumber \\
&\leq |T|^{-\frac{1}{2}} \left( \left\| \phi_1 (-\log (-T), .) \right\|_{L^2(\BB)} + \| \psi_{a(-\log(-T))} - \psi_{a_\infty} \|_{L^2(\BB)} \right) \nonumber\\
&\lesssim |T|^{-\frac{1}{2}} \left( \| \Phi(-\log(-T)) \|
+ |a(-\log(-T)) - a_\infty| \right) \nonumber \\
&\lesssim |T|^{-\frac{1}{2}} |T|^{\eep} = |T|^{-\varepsilon}.
\end{align}
 The second term is obtained in a similar fashion from the derivative of \eqref{diff1}, i.e.,
\begin{align*}
  \partial_{X^j} ( \psi - \psi_{a(-\log(-T))} ) \left( -\log(-T),\frac{X}{-T} \right)
&= \partial_{X^j} \phi_1\left(-\log(-T),\frac{X}{-T}\right) \\
&= \frac{1}{-T} \partial_j \phi_1\left(-\log(-T),\frac{X}{-T}\right).
\end{align*}
 This yields the $\dot H^1(\Sigma_T)$-norm
\begin{align}\label{H1:1}
 \| v - v_{a_\infty} \|_{\dot H^1(\Sigma_T)} &= |T|^{-1} \left\| \nabla (\psi - \psi_{a_\infty}) \left(-\log(-T),\frac{.}{-T} \right) \right\|_{L^2(B_{|T|})} \nonumber \\
&\leq |T|^{-\frac{1}{2}} \left( \| \nabla \phi_1 (-\log(-T),.) \|_{L^2(\BB)} + \|\nabla (\psi_{a(-\log(-T))} - \psi_{a_\infty}) \|_{L^2(\BB)} \right) \nonumber \\
&\lesssim |T|^{-\frac{1}{2}} \left( \| \Phi (-\log(-T)) \| + |a(-\log(-T))-a_\infty| \right) \nonumber \\
&\lesssim |T|^{-\frac{1}{2}} |T|^{\eep} = |T|^{-\varepsilon},
\end{align}
 again by the definition of $\|.\|$ in \eqref{H-norm} and the definitions of $\X$ and $\A$.
 Together, \eqref{H1:2} and \eqref{H1:1} make up the $H^1$-norm on the leaf $\Sigma_T$ (see Definition~\ref{def:norms}),
\begin{align}\label{eest1}
 \| v - v_{a_\infty} \|_{H^1(\Sigma_T)} &= \sqrt{\| v - v_{a_\infty} \|^2_{\dot H^1(\Sigma_T)} + T^{-2}\| v - v_{a_\infty} \|^2_{L^2(\Sigma_T)}} \lesssim |T|^{-\varepsilon}.
\end{align}

 It remains to estimate the difference $\nabla_n v - \nabla_n v_{a_\infty}$ of the normal derivatives introduced in Definition~\ref{def:nabla_n}. The $T$-derivative can be rewritten using the expression \eqref{eq:dT} in the proof of Lemma~\ref{lem:transform2}, which makes up the second component of $\Psi$ (see \eqref{Psi}) in the formulation of the evolution problem as a first-order system \eqref{cwe-Psi}. The modulation ansatz of Section~\ref{ssec:mod} thus allows us to estimate the difference in normal derivatives in terms of the second component of the perturbation term $\Phi$ since
\begin{align*}
& \frac{\nabla_n v \circ \kappa_T(X) - \nabla_n v_{a(-\log(-T))} \circ \kappa_T(X)}{T^2-|X|^2} \\
&\qquad\qquad = \partial_T u(T,X) - \partial_T u_{a(-\log(-T))}(T,X) \\
&\qquad\qquad = T^{-2} \left(1 + \partial_0 + \frac{X^j}{-T} \partial_j \right)(\psi - \psi_{a(-\log(-T))}) \left(-\log(-T),\frac{X}{-T}\right) \\
&\qquad\qquad = T^{-2} \phi_2 \left(-\log(-T),\frac{X}{-T}\right).
\end{align*}
For the $L^2$-norm on $\Sigma_T$ we once more obtain the asymptotic estimate
\begin{align}\label{eest2}
 \| \nabla_n v - \nabla_n v_{a_\infty} \|_{L^2(\Sigma_T)}
&= T^{-2} \left\| (\psi_2-\psi_{a_\infty,2}) \left(-\log(-T),\frac{.}{-T}\right) \right\|_{L^2(B_{|T|})} \nonumber \\
&\leq |T|^{-\frac{1}{2}} \left( \| \phi_2(-\log(-T),.) \|_{L^2(\BB)} + \| \psi_{a(-\log(-T)),2} - \psi_{a_\infty,2} \|_{L^2(\BB)} \right) \nonumber\\
&\lesssim |T|^{-\frac{1}{2}} \left( \| \Phi(-\log(-T)) \| + |a(-\log(-T))-a_\infty| \right) \nonumber \\
&\lesssim |T|^{-\frac{1}{2}} |T|^{\eep} = |T|^{-\varepsilon}.
\end{align}

 Hence by \eqref{eest1}--\eqref{eest2} we have
\begin{align}\label{v-estimate}
 |T|^\frac{1}{2} \left( \| v - v_{a_\infty} \|_{H^1(\Sigma_T)} + \| \nabla_n v - \nabla_n v_{a_\infty} \|_{L^2(\Sigma_T)} \right) \lesssim |T|^{\frac{1}{2}-},
\end{align}
 where we have used the factor $|T|^\frac{1}{2}$ because the norm of $v_{a_\infty}$ is of order $|T|^{-\frac{1}{2}}$ (see Remark~\ref{rem:norm})

\medskip
\textbf{Step 2. $v-v_{a_\infty}$ in localized Strichartz norms.}
 To obtain a decay estimate in terms of $(t,x)$ we fix $\delta >0$ and integrate over the cylinder $Z_t := (t,2t) \times B_{(1-\delta)t} \subseteq \RR \times \RR^3$. The time component $T = - \frac{t}{t^2-|x|^2}$ of the inverse Kelvin transform $\kappa^{-1}$ is a strictly increasing function,  which is why the domain $Z_t$ amounts to considering hyperboloidal slices $\Sigma_S$ with index $S$ in the interval
\begin{align*}
 - \frac{t}{t^2-|x|^2} < S := - \frac{s}{s^2-|x|^2} < - \frac{2t}{4t^2-|x|^2}.
\end{align*}
 Since $t^2-|x|^2 \geq t^2 (2\delta-\delta^2)$ and $4t^2 \geq 4t^2 - |x|^2$ we may consider a larger time interval and estimate $S$ independent of $x$ by
\[ -\frac{1}{t(2\delta-\delta^2)} < S < -\frac{1}{2t}. \]
 Similarly, we have to translate the domain $x \in B_{(1-\delta)t}$ for $x$ to $X$ and $S$. For $s \in (t,2t)$ this yields
\[
 |X| = \frac{|x|}{s^2-|x|^2} < \frac{(1-\delta)t}{s^2-|x|^2} < (1-\delta) \frac{s}{s^2-|x|^2} = (1-\delta) |S|.
\]
 From the Jacobi matrix of the Kelvin transform $\kappa \colon (T,X) \mapsto (t,x)$ computed in the proof of Lemma~\ref{lem:transform1} we obtain the determinant
\[
 \det D \kappa (T,X) = (T^2 - |X|^2)^{-4}.
\]
 Therefore,
\begin{align*}
 & \left\| v - v_{a_\infty} \right\|_{L^p(t,2t) L^p (B_{(1-\delta)t})}^p \\
&\qquad = \int_t^{2t} \int_{B_{(1-\delta)t}} \left|(v - v_{a_\infty})(s,x)\right|^p \, dx \, ds \\
&\qquad \leq \int_{-\frac{1}{t(2\delta-\delta^2)}}^{-\frac{1}{2t}} \int_{B_{(1-\delta)|S|}} \frac{|v\circ \kappa_S(X) - v_{a_\infty} \circ \kappa_S(X)|^p}{(S^2-|X|^2)^4} \, dX \, dS \\
&\qquad = \int_{-\frac{1}{t(2\delta-\delta^2)}}^{-\frac{1}{2t}} \int_{B_{(1-\delta)|S|}} \left| (u - u_{a_\infty})(X,S) \right|^p (S^2-|X|^2)^{p-4} \, dX \, dS.
\end{align*}
 The reformulation \eqref{M2} together with the coordinate transformation $Y^i := \frac{X^i}{-S}$ with $dX = |S|^3 dY$ yields
\begin{align}\label{Lq:1}
 \| v &- v_{a_\infty} \|_{L^p(t,2t) L^p (B_{(1-\delta)t})}^p \nonumber \\
&\leq \int_{-\frac{1}{t(2\delta-\delta^2)}}^{-\frac{1}{2t}} |S|^{p-5} \left( \int_{B_{1-\delta}} 
(1-|Y|^2)^{p-4} \left| (\psi - \psi_{a_\infty}) \left( - \log (-S),Y \right) \right|^p dY \right) dS \nonumber \\
&\lesssim \int_{-\frac{1}{t(2\delta-\delta^2)}}^{-\frac{1}{2t}} |S|^{p-5} \left\| (\psi - \psi_{a_\infty}) (-\log(-S),.) \right\|^p_{L^p(\BB)} \, dS
\end{align}
 By the Sobolev embedding $H^1 (\BB) \hookrightarrow L^p (\BB)$ for $p \in[1, 6]$, together with the conclusion that $\Phi \in \mathcal{X}^\varepsilon_{\tilde \delta}$, $a \in \mathcal{A}^\varepsilon_{\tilde \delta}$ (for sufficiently small $\tilde \delta$) from Theorem~\ref{thm:manifold}, we can further deduce that
\begin{align}\label{Lq:2}
 \| (\psi - \psi_{a_\infty}) (-\log(-S),.) \|_{L^p(\BB)} 
&\leq \left\| \phi_1 (-\log(-S),.) \right\|_{L^p(\BB)} + \|\psi_{a(-\log(-S))}-\psi_{a_\infty} \|_{L^p(\BB)} \nonumber \\
&\lesssim  \left\| \Phi (-\log(-S)) \right\| + |a(-\log(-S)) - a_\infty| \nonumber \\
&\lesssim |S|^{\eep},
\end{align}
 for $\varepsilon \in (0,\frac{1}{2})$. Together, \eqref{Lq:1}--\eqref{Lq:2} imply that
\begin{align*}
 \| v - v_{a_\infty} \|_{L^p(t,2t) L^p (B_{(1-\delta)t})}^p
 &\lesssim \int_{-\frac{1}{t(2\delta-\delta^2)}}^{-\frac{1}{2t}} |S|^{p\left(1+\beep\right)-5} dS
 \lesssim t^{4-p(\frac{3}{2}-\varepsilon)}.
\end{align*}
 where we assumed, without loss of generality, that $\varepsilon > 0$ is sufficiently small so that the exponent satisfies $p (\frac{3}{2}-\varepsilon)-5 > -1$. Thus we obtain the asymptotic decay
\begin{align}\label{pq-estimate}
 \| v - v_{a_\infty} \|_{L^p(t,2t) L^p (B_{(1-\delta)t})} \lesssim t^{-\left( \frac{3}{2}-\frac{4}{p} \right) + \varepsilon} =: t^{-\left( \frac{3}{2}-\frac{4}{p} \right) +},
\end{align}
 which for $p = 4$ this is the desired estimate
\[
 \left\| v - v_{a_\infty} \right\|_{L^4(t,2t) L^4 (B_{(1-\delta)t})} \lesssim t^{-\frac{1}{2}+}. \qedhere
\]
\end{proof}

\begin{remark}[More general Strichartz estimates] \label{otherpq}
 As can be seen from the proof of Theorem~\ref{maintheorem} above, in particular \eqref{pq-estimate}, it is possible to choose other $p$. For $p \in (\frac{8}{3},6]$ one obtains the decay estimate
\[
 \| v - v_{a_\infty} \|_{L^p(t,2t) L^p (B_{(1-\delta)t})} \lesssim t^{-\left( \frac{3}{2}-\frac{4}{p} \right) +}.
\]
The upper bound for $p$ is necessary to be able to use the Sobolev embedding $H^1(\BB)\hookrightarrow L^p(\BB)$.
\end{remark}

\begin{remark}[Formulation for $u$]\label{rem:transformation}
 In terms of the hyperboloidal coordinates $(T,X)$ the codimension-1 decay estimate of Theorem~\ref{maintheorem} translates to a codimension-1 stability result of the blowup solution $u_0(T,X) = - \sqrt{2}/T$ for the cubic wave equation~\eqref{cwe-u} in the backward light cone $\vartriangle_0$. In particular, in the course of the proof of Theorem~\ref{maintheorem} we show that a solution $u$ of \eqref{cwe-u} corresponding to initial data in $\M$ which approximates the blowup solution $u_{a_\infty}$ satisfies
\begin{align*}
 |T|^{\frac{1}{2}} \| u(T,.) - u_{a_\infty}(T,.) \|_{\dot H^1(B_{|T|})} &\lesssim |T|^{\frac{1}{2}-}, \\
 |T|^{-\frac{1}{2}}  \| u(T,.) - u_{a_\infty}(T,.) \|_{L^2(B_{|T|})} & \lesssim |T|^{\frac{1}{2}-}, \\
 |T|^{\frac{1}{2}} \| \partial_T u(T,.) - \partial_T u_{a_\infty}(T,.) \|_{L^2(B_{|T|})} &\lesssim |T|^{\frac{1}{2}-},
\end{align*}
for $T \in [-1,0)$ (cf.\ estimate \eqref{v-estimate} for $v$). This is the content of Theorem~\ref{maintheorem-blowup}.
\end{remark}

\subsection{Evolution for small initial data: Proof of Theorem~\ref{smalldatatheorem}}
\label{ssec4.6}

In Theorem~\ref{maintheorem} we have shown that solutions $v$ with initial data from the codimension-1 manifold $\M$ remain $t^{-\frac{1}{2}+}$ close to Lorentz transformations $v_a$ of the selfsimilar solution $v_0(t,x)=\sqrt{2}/t$, which themselves decay slowly.
On the contrary, the long time behavior of solutions to nonlinear wave equations with small initial data (that is, around $0$) is rather different. These solutions decay with $t^{-\frac{1}{2}}$.

\begin{proof}[Proof of Theorem~\ref{smalldatatheorem}]
 The proof proceeds along the same lines as the proof of Theorem~\ref{maintheorem}, however we compare solutions with small initial data simply to the trivial solution $v\equiv$ of \eqref{cwe}. This leads to several simplifications: The operator $\L'$ defined analogously to $\L_a'$ in \eqref{Laprime} vanishes identically, and the linearized equation therefore is just the free equation. More precisely, $\L + \L' = \L$ and the associated semigroup is the free semigroup $\S$ obtained in Proposition~\ref{prop:L} with exact decay
\begin{align}\label{S1}
 \| \S(\tau) \| \leq e^{-\frac{\tau}{2}}, \quad \tau \in [0,\infty).
\end{align}
 Hence the nonlinear treatment of the equation in Section~\ref{sec4} is analogous to that for Theorem~\ref{maintheorem} with the only difference that we have to use the decay \eqref{S1} instead of the one obtained in Proposition~\ref{prop:proj} (which required an additional $\varepsilon>0$). Thus for solutions $v$ corresponding to small initial data we obtain the decay estimate
\[
 \| v \|_{L^4(t,2t) L^4 (B_{(1-\delta)t})} = \| v - 0 \|_{L^4(t,2t) L^4 (B_{(1-\delta)t})} \lesssim t^{-\frac{1}{2}},
\]
 and more generally for $p\in[1,6]$
\[
 \| v \|_{L^p(t,2t) L^p (B_{(1-\delta)t})} \lesssim t^{-\left( \frac{3}{2} - \frac{4}{p} \right)}. \qedhere
\]
\end{proof}


\section*{Acknowledgments}

The authors would like to thank the anonymous referees for useful suggestions.
Financial support by the Sofja Kovalevskaja award of the Humboldt Foundation endowed by the German Federal Ministry of Education and Research, held by the second author, is acknowledged. Partial support by SFB 1060 of the German Research Foundation (DFG) is also gratefully acknowledged.


\end{document}